\newtheorem{Theorem}{Theorem}
\newtheorem{Proposition}[Theorem]{Proposition}
\newtheorem{Corollary}[Theorem]{Corollary}
\newtheorem{Lemma}[Theorem]{Lemma}
\newtheorem{COMP}[Theorem]{Compactness Property}
\newtheorem{COMP2}[Theorem]{Compactness Property}
\newtheorem{SMOO}[Theorem]{Smooth Flow Lemma}
\newtheorem{OUT}[Theorem]{Outward Optimising Property}
\newtheorem{REG}[Theorem]{Regularity Theorem}
\newenvironment{jumpproof}{\trivlist\item[]\emph{Proof of Proposition \ref{jumpprop}}:}%
{\unskip\nobreak\hskip 1em plus 1fil\nobreak$\Box$
\parfillskip=0pt%
\endtrivlist}
\newenvironment{outwardproof}{\trivlist\item[]\emph{Proof of Outward Optimising Property \ref{outward1}}:}%
{\unskip\nobreak\hskip 1em plus 1fil\nobreak$\Box$
\parfillskip=0pt%
\endtrivlist}
\newenvironment{MOTS1Proof}{\trivlist\item[]\emph{Proof of Proposition \ref{MOTS1}}:}%
{\unskip\nobreak\hskip 1em plus 1fil\nobreak$\Box$
\parfillskip=0pt%
\endtrivlist}
 \def \vs { \left ( \begin {array} {c} }
    \def \ve { \end {array} \right )} 
\newtheorem{Definition}[Theorem]{Definition}
\title{Evolving hypersurfaces by their inverse null mean curvature}
\begin{document}
\author{Kristen Moore}
\email{kristen.moore@aei.mpg.de}
\maketitle
\begin{abstract} 
We introduce a new geometric evolution equation for hypersurfaces in asymptotically flat spacetime initial data sets, that unites the theory of marginally outer trapped surfaces (MOTS) with the study of inverse mean curvature flow in asymptotically flat Riemannian manifolds. A theory of weak solutions is developed using level-set methods and an appropriate variational principle. This new flow has a natural application as a variational-type approach to constructing MOTS, and this work also gives new insights into the theory of weak solutions of inverse mean curvature flow.\\
\end{abstract}
In what follows we consider an initial data set $(M^{n+1},g,K)$ that arises as a spacelike hypersurface $M^{n+1}$ in a Lorentzian spacetime $(L^{n+2},h)$, with induced metric $g$ and second fundamental form tensor $K$. We further assume that the initial data set $(M,g,K)$ is asymptotically flat, that is, there exists a compact set $\Omega\subset M$ such that $M\backslash\Omega$ consists of a finite number of components, each diffeomorphic to $\mathbb{R}^{n+1}\backslash \bar{B}(0,1)$ and such that under these diffeomorphisms, the metric tensor $g$ and second fundamental form $K$ satisfy
\begin{align*}
|g_{ij}-\delta_{ij}|\leq \dfrac{C}{|x|^{n-1}},&\quad |g_{ij,k}|\leq\dfrac{C}{|x|^{n}},\\
 |K_{ij}|\leq\dfrac{C}{|x|^n},\quad|K_{ij,k}|\leq &\dfrac{C}{|x|^{n+1}},\quad \Bigl|\sum_iK_{ii}\Bigr|\leq\dfrac{C}{|x|^{\,\,\frac{n+3}{2}}}.
\end{align*}
as $|x|\to\infty$, where the derivatives are taken with respect to the Euclidean metric.\\

Let $\vec{n}$ denote the future directed timelike unit normal vector field of $M\subset L$, and consider a 2-sided hypersurface $\Sigma^{n}\subset M^{n+1}$ with globally defined outer unit normal vector field $\nu$ in $M$. The mean curvature vector of $\Sigma$ inside the spacetime $L$ is then given by 
\begin{equation*}\vec{H}_{\Sigma}:=H\nu-P\vec{n},
\end{equation*} 
where $H:=\text{div}_{\Sigma}(\nu)$ denotes the mean curvature of $\Sigma$ in $M$, and $P:=\text{tr}_{\Sigma}K$ is the trace of $K$ over the tangent space of $\Sigma$.

The new initial value problem is then defined as follows. Given a smooth hypersurface immersion $F_{0}:\Sigma\rightarrow M$, the evolution of $\Sigma_{0}:=F_{0}(\Sigma)$ by inverse null mean curvature is the one-parameter family of smooth immersions $F:\Sigma\times[0,T)\rightarrow M$ satisfying
\begin{equation}\tag{$*$}
\left\lbrace
\begin{aligned}
 \frac{\partial F}{\partial t}(x,t) & = \dfrac{\nu}{H+P}(x,t), \quad x\in\Sigma,t\geq0, \\
                         F(\cdot,0) & = F_{0}.
\end{aligned}
\right.
\end{equation}
The quantity $H+P$ corresponds to the null expansion or \textit{null mean curvature} $\theta_{\Sigma_t}^+$ of $\Sigma_t:=F(\Sigma,t)$ with respect to its future directed outward null vector field $l^+:=\nu+\vec{n}$,
\begin{equation*}
\theta^+_{\Sigma_t}:=\langle\vec{H}_{\Sigma_t},l^+\rangle_h=H+P,
\end{equation*}
and we assume that $(H+P)|_{_{\Sigma_0}}>0$ so that $(*)$ is parabolic and the surface $\Sigma_{t}$ expands under the flow. This flow is a generalisation of inverse mean curvature flow, which corresponds to the special time-symmetric case of $(*)$ where $K\equiv0$. Analogous to inverse mean curvature flow, in general it is expected that the null mean curvature of solutions of $(*)$ will tend to zero at some points, and that singularities will develop. Therefore, the main part of this work is devoted to developing a theory of weak solutions of the classical flow $(*)$. 

The motivation for introducing this particular generalisation of inverse mean curvature flow follows from the study of black holes and mass/energy inequalities in general relativity. In particular, it is hoped that this new flow will help to gain insight into the long standing Penrose conjecture in general relativity, which generalises the Riemannian Penrose inequality, proven by Huisken and Ilmanen \cite{HI01} using their theory of weak solutions to inverse mean curvature flow, (see \cite{B01} for an alternative proof by Bray, which applies to the case of multiple horizons).

More specifically, this flow is motivated by the theory of marginally outer trapped surfaces in general relativity. Physically, the outward null mean curvature $\theta^+_{\Sigma}$ measures the divergence of the outward directed light rays emanating from $\Sigma$. If $\theta_{\Sigma}^+$ vanishes on all of $\Sigma$, then $\Sigma$ is called a \textit{marginally outer trapped surface}, or MOTS for short. MOTS play the role of apparent horizons, or quasi-local black hole boundaries in general relativity, and are particularly useful for numerically modelling the dynamics and evolution of black holes.

From a mathematical point of view, MOTS are the Lorentzian analogue of minimal surfaces. However, since MOTS are not stationary solutions of an elliptic variational problem, the direct method of the calculus of variations is not a viable approach to the existence theory. One successful approach to proving existence of MOTS comes from studying the blow-up set of solutions of \textit{Jang's equation}
\begin{equation}\label{Jang}
\left(g^{ij}-\dfrac{\nabla^iw\nabla^jw}{|\nabla w|^2+1}\right)\left(\dfrac{\nabla_i\nabla_jw}{\sqrt{|\nabla w|^2+1}}+K_{ij}\right)=0,
\end{equation}
for the height function $w$ of a hypersurface, which was an essential ingredient in the Schoen-Yau proof of the positive mass theorem \cite{SY81}. In their analysis, Schoen and Yau showed that the boundary of the blow-up set  of Jang's equation consists of marginally trapped surfaces. Building upon this work, existence of MOTS in compact data sets with two boundary components, such that the inner boundary is (outer) trapped and the outer boundary is (outer) untrapped, has been proven by Andersson and Metzger \cite{AM09}, and independently by Eichmair \cite{E09}, using different approaches for the barrier argument at the boundary. Similarly, we see below that Jang's equation also plays a key role in the existence theory for weak solutions of $(*)$. \\

To develop the weak formulation for the classical evolution $(*$), we use the level-set method and assume the evolving surfaces are given by the level-sets,
\begin{equation}\label{levelset}\Sigma_{t}= \partial\{ x\in M \,\, \big| \,\, u(x)<t \}, \end{equation}
of a scalar function $u:M\rightarrow\mathbb{R}$. Then whenever $u$ is smooth and $\nabla u\neq0$, the surface flow equation $(*)$ is equivalent to the following degenerate elliptic scalar PDE
\begin{equation}\tag{$**$}
 \mbox{div}_M\left( \frac{\nabla u}{|\nabla u|}\right) +\left(g^{ij}-\dfrac{\nabla^iu\nabla^ju}{|\nabla u|^2}\right)K_{ij}= |\nabla u|.
\end{equation}
However, since this is an expanding flow,  the function $u$ is monotone non-decreasing. Therefore it only makes sense to study $(**)$ on initial data sets $(M,g,K)$ satisfying $\text{tr}_MK\geq0$ outside $\Sigma_0$, so that the zero function is a subsolution barrier for the Dirichlet problem for $(**)$.
\medskip

In order to solve $(**)$, we employ the method of \textit{elliptic regularisation}, and study solutions, $u_{\varepsilon}$,  of the following strictly elliptic equation 
\begin{align*}(*)_{\varepsilon}\quad \text{div}_M\left(\dfrac{\nabla u_{\varepsilon}}{\sqrt{|\nabla u_{\varepsilon}|^2+\varepsilon^2}}\right)+\left(g^{ij}-\dfrac{\nabla^iu_{\varepsilon}\nabla^ju_{\varepsilon}}{|\nabla u_{\varepsilon}|^2+\varepsilon^2}\right)K_{ij}=\sqrt{|\nabla u_{\varepsilon}|^2+\varepsilon^2}.\quad\quad\quad\quad\quad\quad
\end{align*}
A notable feature of elliptic regularisation that is heavily exploited in this work is that the downward translating graph 
 \begin{equation}\label{translating}
\tilde{\Sigma}^{\varepsilon}_t:=\text{graph}\Bigl(\dfrac{u_{\varepsilon}}{\varepsilon}-\frac{t}{\varepsilon}\Bigr)
\end{equation}
solves the classical evolution $(*)$ in the product manifold $(M\times\mathbb{R},\bar{g}:=g\oplus dz^2)$, where we extend the given data $K$ to be parallel in the $z$-direction. Furthermore, this elliptic regularisation problem sheds new light on the study of Jang's equation (\ref{Jang}), since a rescaling of $(**)_{\varepsilon}$ can be interpreted as (\ref{Jang}) with a gradient regularisation term.

To define weak solutions to $(**)$, we use a variational principle for the energy functional
\begin{equation}\label{Ifunctional1}
\mathcal{J}^{A}_{u,\nu}(F):=|\partial^* F\cap A|-\int_{F\cap A}|\nabla u|-\left(g^{ij}-\nu^i\nu^j\right)K_{ij},
\end{equation}
defined for sets $F$ of locally finite perimeter and any compact set $A$. Here $\partial^*F$ denotes the reduced boundary of $F$, and $\nu$ represents the unit normal $\nabla u/|\nabla u|$ to the surfaces $\Sigma_t$ defined by (\ref{levelset}). The special case $K\equiv0$ corresponds to the functional employed by Huisken and Ilmanen in \cite{HI01}, and weak solutions of $(**)$ necessarily exhibit the same jumping phenomenon, characteristic of weak solutions of inverse mean curvature flow. However, since $\nabla u/|\nabla u|$ is undefined on plateaus of the locally Lipschitz function $u$, we must define an appropriate notion of normal vector in these jump regions. For this reason, a careful analysis of the jump region of the limit of the elliptic regularisation problem $(*)_{\varepsilon}$ is vital to determining the correct formulation of weak solutions to $(*)$, and constitutes a significant part of this work. On the contrary, a complete analysis of jump regions  of inverse mean curvature flow is not included in \cite{HI01}, since it was not necessary for the proof of the Riemannian Penrose Inequality. \\

The main result of this work is the following weak existence theorem.
\begin{Theorem}\label{wk}
Let $(M,g,K)$ be a complete, connected, asymptotically flat initial data set and let $E_0$ be any precompact, smooth open set in $M$. Assume that $\text{tr}_MK\geq0$ on $M\backslash E_0$. Then there exists a weak solution of $(**)$ in $M$ with initial condition $E_0$.
\end{Theorem}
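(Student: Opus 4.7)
My strategy is to adapt the elliptic regularisation scheme of Huisken-Ilmanen to the new equation $(**)$, using the translating-graph correspondence (\ref{translating}) and the sign assumption $\text{tr}_M K \geq 0$ to build barriers, and then passing to the limit in the regularisation parameter to extract the weak solution.

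I would first exhaust $M$ by a sequence of precompact domains $E_0 \subset \Omega_L$ (coordinate balls of radius $L$ in the asymptotic chart) and solve the Dirichlet problem for $(*)_\varepsilon$ on $\Omega_L \setminus \bar{E}_0$ with $u_\varepsilon = 0$ on $\partial E_0$ and a suitably chosen boundary value (linear in $L$) on $\partial \Omega_L$. The corresponding downward-translating graph $z = u_\varepsilon(x)/\varepsilon - t/\varepsilon$ is then a classical solution of $(*)$ in $(M\times\mathbb{R},\bar g)$, converting elliptic estimates into parabolic estimates along a genuine inverse null mean curvature flow. The hypothesis $\text{tr}_M K \geq 0$ makes $u\equiv 0$ a subsolution, while a supersolution is built from large coordinate spheres in the asymptotic end, where the decay conditions on $g$ and $K$ give $H+P \approx n/|x|$. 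Comparison then yields a $C^0$ bound, and the key $\varepsilon$-independent gradient estimate $|\nabla u_\varepsilon|\leq C$ follows, as in \cite{HI01}, by applying the maximum principle to $v:=\sqrt{|\nabla u_\varepsilon|^2 + \varepsilon^2}$ on the translating graph, where $v$ has the meaning of the inverse null mean curvature of $\tilde\Sigma^\varepsilon_t$. Schauder theory then promotes the Lipschitz bound to interior $C^{2,\alpha}$ estimates.

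Passing $L \to \infty$ along a subsequence produces a global solution $u_\varepsilon$ of $(*)_\varepsilon$ on $M\setminus E_0$ (extended by zero across $\partial E_0$); a further subsequence as $\varepsilon \to 0$ converges locally uniformly to a locally Lipschitz function $u$ by Arzel\`a-Ascoli. To establish the variational characterisation, I would verify that each $u_\varepsilon$ locally minimises the regularised energy obtained from (\ref{Ifunctional1}) by replacing $|\nabla u|$ with $\sqrt{|\nabla u|^2+\varepsilon^2}$ and taking $\nu_\varepsilon := \nabla u_\varepsilon/\sqrt{|\nabla u_\varepsilon|^2+\varepsilon^2}$ (so that $(*)_\varepsilon$ is precisely the Euler-Lagrange equation), and then pass to the limit using $BV$ compactness and lower semicontinuity of the perimeter, together with strong $L^1_{\mathrm{loc}}$ convergence of $|\nabla u_\varepsilon|$ away from the plateaus of $u$.

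The main obstacles I anticipate are, in increasing order of difficulty: (i) construction of the supersolution using only the weak decay $|\text{tr}_M K|\leq C|x|^{-(n+3)/2}$; (ii) the uniform gradient estimate, where the coupling to $K$ introduces extra zeroth-order terms in the evolution of $v$ that must be controlled via the asymptotic bounds on $K$ and $\nabla K$; and, most importantly, (iii) identification of the correct normal vector on the plateaus of $u$ entering $\mathcal{J}^A_{u,\nu}$. This last point is not directly accessible from the weak limit and will require a delicate compactness analysis of the limiting graphs $\tilde\Sigma^\varepsilon_t$ themselves; it is precisely the jump-region issue flagged in the introduction as not addressed in \cite{HI01}, and I expect it to consume the bulk of the work.
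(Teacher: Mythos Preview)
Your proposal is essentially correct and follows the paper's approach: elliptic regularisation on exhausting domains, barriers exploiting $\text{tr}_M K\geq 0$, the interior $H+P$ estimate transferred via the translating graphs, Arzel\`a--Ascoli to extract a Lipschitz limit, and compactness of the $\tilde\Sigma^\varepsilon_t$ to resolve the normal on plateaus. Two points where the paper differs in detail from your sketch: first, the a priori estimates (Lemma~\ref{apriori}) hold only for $\varepsilon<\varepsilon(L)$, so one cannot first send $L\to\infty$ at fixed $\varepsilon$ and then $\varepsilon\to 0$ --- the paper takes a diagonal limit $\varepsilon_i\to 0$, $L_i\to\infty$ simultaneously; second, and more conceptually, the paper does not merely use the graphs $\tilde\Sigma^\varepsilon_t$ as an analytic device and then project back to $M$, but actually \emph{defines} the weak solution one dimension higher (Definition~\ref{weakdefinition}) as a translation-invariant pair $(U,\bar\nu)$ on $M\times\mathbb{R}$, precisely because the limiting foliation of the jump region may consist of non-cylindrical graphs whose normal carries a nontrivial vertical component that would be lost under projection to $TM$.
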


The variational principle defining weak solutions also leads to a geometric characterisation of the flow, and in particular, of jump regions. We show that the level sets $\Sigma_t$ are \textit{outward optimising} (see (\ref{mini})) in the sense that they minimise ``area plus bulk energy $P$" on the outside, along the family of surfaces. This one-sided variational principle can then be exploited, via the choice of an appropriate initial condition, to prove the following existence result for MOTS.
\begin{Proposition}\label{application}
The weak solution of $(**)$ with outer trapped initial condition $E_0$ satisfying $\theta^+_{\partial E_0}<0$ will jump immediately to a smooth MOTS in $M\backslash \bar{E}_0$ at time $t=0$.\end{Proposition}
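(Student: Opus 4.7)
The approach is to exploit the outward optimising property (\ref{mini}) that characterises the level sets of the weak solution: for each $t$, the set $E_t := \{u < t\}$ outward-minimises the functional $\mathcal{F}(F) := |\partial^* F| + \int_{F\setminus E_t} P$ (equivalently, the full $\mathcal{J}$ with $|\nabla u|$ vanishing on the initial plateau) among $F \supseteq E_t$ of locally finite perimeter. In particular, the immediate outward-minimising hull $E_0^+ := \lim_{t \downarrow 0} E_t$ is a minimiser of $\mathcal{F}$ subject to $F \supseteq E_0$. The plan is to show that the strict outer trapped condition forces $E_0^+ \supsetneq E_0$, and that the free boundary $\partial E_0^+ \setminus \partial E_0$ is a smooth MOTS.

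First I would compute the first variation of $\mathcal{F}$ at $F = E_0$ in an outward direction with positive normal speed $\varphi$, obtaining
\[\left.\frac{d}{ds}\right|_{s=0}\mathcal{F}(E_0^s) = \int_{\partial E_0}(H+P)\,\varphi\,d\mu = \int_{\partial E_0}\theta^+_{\partial E_0}\,\varphi\,d\mu < 0,\]
so $E_0$ is strictly non-optimising. Existence of a minimiser $E_1 \supseteq E_0$ within the class $\{F : E_0 \subseteq F \subseteq B_R\}$ for sufficiently large $R$ follows from standard BV compactness, lower semicontinuity of the perimeter, and the bounded decay of $P$ coming from the asymptotic flatness of $K$. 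The hypothesis $\text{tr}_M K \geq 0$ together with the asymptotic-sphere super-solutions ensures that $E_1$ does not touch $\partial B_R$, so $E_1$ is a free minimiser, and by the first variation computation $E_1 \supsetneq E_0$.

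On the free part $\partial E_1 \setminus \partial E_0$, the Euler-Lagrange equation of $\mathcal{F}$ reads $H + P = 0$ distributionally, i.e.\ $\partial E_1 \setminus \partial E_0$ is a MOTS. Regularity follows from the Regularity Theorem referenced earlier, which applies to almost minimisers of perimeter with bounded bulk forcing and yields smoothness away from a singular set of Hausdorff dimension $\leq n-7$. A strong maximum principle argument then excludes contact between $\partial E_1 \setminus \partial E_0$ and $\partial E_0$: at a putative contact point the free MOTS and the strictly outer trapped surface $\partial E_0$ would satisfy $H_1 + P_1 = 0 > H_0 + P_0$ while being tangent and ordered, contradicting the comparison principle for the quasilinear operator $H + P$. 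Hence $\partial E_1 \cap \partial E_0 = \emptyset$ and $\partial E_1$ is a smooth MOTS contained in $M\setminus\bar{E}_0$.

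Finally, the characterisation of $E_0^+$ as the outward-minimising hull (combined with uniqueness of this hull, which one would verify using standard one-sided minimisation arguments) gives $E_0^+ = E_1$, so the weak solution jumps immediately from $E_0$ to the smooth MOTS $\partial E_1$ at $t = 0$. The main obstacle is the separation step via the strong maximum principle, where the \emph{strict} inequality $\theta^+_{\partial E_0} < 0$ is essential — this is precisely what rules out a smooth continuation of the flow from $\partial E_0$ and produces the instantaneous jump to a MOTS strictly enclosing the initial data.
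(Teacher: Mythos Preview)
Your overall strategy is sound and arrives at the same conclusion, but it is organised quite differently from the paper's argument. The paper does not set up an independent obstacle minimisation at $t=0$; instead it simply quotes two structural facts already established for the weak solution: the Outward Optimising Property~\ref{outward1} (which forces a genuine jump at $t=0$ since $\theta^+_{\partial E_0}<0$ makes $E_0$ fail to be outward optimising), together with Proposition~\ref{MOTS1} (the foliation of the interior of every jump region by smooth MOTS, obtained via the elliptic regularisation limits $\tilde{\Sigma}^{\varepsilon}_t$). From these two ingredients the conclusion that $\partial\{u>0\}$ is a smooth MOTS is immediate. Your route bypasses Proposition~\ref{MOTS1} entirely and instead reads off the MOTS equation as the Euler--Lagrange equation of the outward-hull problem, then bootstraps via the Regularity Theorem~\ref{regthm} and Schauder theory, and finally invokes a strong maximum principle to separate $\partial E_0^+$ from $\partial E_0$. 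This is more self-contained for the specific statement at hand; the paper's route, by contrast, yields the much stronger structural picture of the entire jump region at no extra cost.

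Two points in your write-up deserve tightening. First, the bulk term $P=(g^{ij}-\nu^i\nu^j)K_{ij}$ in $\mathcal{F}$ is not an a priori given function on $M\setminus E_0$: it depends on the vector field $\nu=\nu_M=\bar{\nu}|_{TM}$, which is itself part of the weak solution (extended across the plateau via the foliation of Proposition~\ref{jumpprop}). Your ``standard BV compactness'' construction of a minimiser $E_1$ is therefore circular as written; in fact no such construction is needed, since the weak solution already hands you $E_0^+=\text{int}\{u\le 0\}$, and Lemma~\ref{reduction} together with Outward Optimising Property~\ref{outward1}(iii) identifies it as the strictly outward optimising hull. The Euler--Lagrange computation then gives $H+P=0$ on the free part of $\partial E_0^+$, but to recognise this as the MOTS equation you must observe that $\bar{\nu}$ restricted to $\tilde{\Sigma}_0^+$ coincides with the outward normal there (this is exactly the content of the definition of $\bar{\nu}$ at the boundary of the jump region). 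Second, your separation argument via the strong maximum principle is correct in spirit but needs care: a priori $\partial E_0^+$ is only $C^{1,\alpha}$ near the contact set (Regularity Theorem~\ref{regthm} applied to the obstacle problem), so the comparison for the second-order operator $w\mapsto H(w)+P(w)$ cannot be applied directly at a contact point. One cleans this up by first pushing $\partial E_0$ outward by a small normal distance to produce a smooth barrier with $\theta^+<0$, and then comparing; the strict sign of $\theta^+_{\partial E_0}$ is used precisely here.
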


Proposition \ref{application} highlights the natural utility of this flow as a variational type approach to constructing MOTS in initial data sets containing an outer trapped surface $E_0$, where $\text{tr}_MK\geq0$ on $M\backslash E_0$.

We remark that if the mean curvature of the initial data set instead satisfies $\text{tr}_MK\leq0$, the corresponding existence result applies for the flow with speed equal to the reciprocal of $H-P$, with analogous interpretations of the solution in relation to marginally inner trapped surface (MITS) in the initial data set. \\

The results of this paper are laid out as follows. We begin in Section 2 with a brief remark on the classical evolution by inverse null mean curvature, and derive an interior estimate for the null mean curvature of smooth solutions. In Section 3 we introduce the level-set formulation of the flow, and prove existence of solutions, $u_{\varepsilon}$, to the elliptic regularisation problem. The translating graphs $\tilde{\Sigma}^{\varepsilon}_t$ given by (\ref{translating}) are then used in Section 4 to study the jump regions of the limit, $u,$ of the regularised solutions $u_{\varepsilon}$. In Section 5 we introduce the variational formulation of weak solutions, using the jump region analysis of Section 4 to motivate the choice of definition of weak solutions. In Section 6 we introduce the concept of outward optimisation to give a geometric characterisation of jump regions of weak solutions, and show that the interior of the jump region is foliated by smooth MOTS. The main existence result, Theorem \ref{wk}, then follows in Section 7, and we discuss applications of the flow, including Proposition \ref{application} in Section 8.

\vskip 0.1 true in
\textit{Acknowledgements.} I would like to sincerely thank my advisor, Gerhard Huisken, for introducing me to this topic and for his insightful and enthusiastic guidance throughout the development of this work.

\section{The smooth flow}
Since the aim of this work is to develop the weak theory for the evolution by inverse null mean curvature, we will not provide a classical PDE analysis of $(*)$, except to remark that the leading order term of the linearised equation is $\frac{1}{(H+P)^2}\Delta$ on the right hand side, where $\Delta$ denotes the Laplace-Beltrami operator with respect to the metric $g$ at time $t$. This is an elliptic operator as long as $(H+P)^{-2}$ remains non-singular, so $(*)$ is parabolic so long as the null mean curvature of the evolving surface remains strictly positive. 

In Section 3 we construct an explicit, non-compact solution $\tilde{\Sigma}^{\varepsilon}_t$ of $(*)$, for which we require an upper null mean curvature bound. The objective of this section is therefore to derive the interior $H+P$ estimate (\ref{H+P estimate}) for smooth solutions of $(*)$ (which also holds for non-compact solutions). We begin by stating the evolution equations for some fundamental quantities. Let $\nabla$ be the connection on the initial data set $(M,g,K)$ and let the induced connection and second fundamental form on $\Sigma_t$ be denoted by $D$ and $A=\{h_{ij}\}$ respectively. 
\begin{Lemma}\label{monoton}Smooth solutions of $(*)$ with $H+P>0$ satisfy the following evolution equations.\\
\noindent i)$\,\dfrac{d}{dt}H=\dfrac{1}{(H+P)^2}\Delta(H+P)-2\dfrac{|D(H+P)|^2}{(H+P)^3}-\dfrac{1}{H+P}(|A|^2+\bar{Ri}c(\nu,\nu)).\\$
ii)$\,\dfrac{d}{dt}\nu=-D\left(\frac{1}{H+P}\right) $\\
iii)$\,\dfrac{d}{dt}P=\dfrac{1}{H+P}\left(\nabla_{\nu}\text{tr}_MK-(\nabla_{\nu}K)(\nu,\nu)\right)-\dfrac{2}{(H+P)^2}D_i(H+P)K_{i\nu}.$\\
\noindent iv)$\,\dfrac{d}{dt}|\Sigma_t|+\int\limits_{V(\Sigma_t)\backslash V(\Sigma_0)}P\,dV=|\Sigma_t|$, whenever $\Sigma_0$ is closed, where $V(\Sigma)$ denotes the volume enclosed by $\Sigma$.
\end{Lemma}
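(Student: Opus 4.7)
The proof is a direct computation starting from the standard evolution formulas for a hypersurface moving with normal speed $f=1/(H+P)$, i.e.\ $\partial_t F = f\nu$. The classical formulas (valid in any ambient Riemannian manifold, here $(M,g)$) read
\[
\tfrac{d}{dt}g_{ij} = 2f h_{ij}, \quad \tfrac{d}{dt}\nu = -Df, \quad \tfrac{d}{dt}\,d\mu = fH\,d\mu,
\]
\[
\tfrac{d}{dt}H = -\Delta f - f\bigl(|A|^2 + \overline{\mathrm{Ric}}(\nu,\nu)\bigr),
\]
where $D$, $\Delta$ are computed on $\Sigma_t$ with the induced metric. I would quote these as standard (they are derived, e.g., in the usual IMCF references) and plug in $f=1/(H+P)$.

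For (i), I expand
\[
-\Delta\!\left(\tfrac{1}{H+P}\right) = \tfrac{\Delta(H+P)}{(H+P)^2} - \tfrac{2|D(H+P)|^2}{(H+P)^3},
\]
and combine with the curvature term from the evolution of $H$; this gives (i) verbatim. Formula (ii) is just the specialization of $\tfrac{d}{dt}\nu=-Df$ to $f=1/(H+P)$.

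For (iii), the natural move is to write $P = \mathrm{tr}_M K - K(\nu,\nu)$, so that $\mathrm{tr}_M K$ is a fixed scalar function on $M$ and $K$ is a fixed $(0,2)$-tensor on $M$. Time differentiation is then material differentiation along the flow, giving
\[
\tfrac{d}{dt}\mathrm{tr}_M K = f\nabla_\nu\mathrm{tr}_M K, \qquad \tfrac{d}{dt}K(\nu,\nu) = f(\nabla_\nu K)(\nu,\nu) + 2K\!\left(\nu,\tfrac{d}{dt}\nu\right).
\]
Substituting (ii) into the last term, the $2K(\nu,\tfrac{d}{dt}\nu)$ contribution becomes $-2K_{i\nu} D_i(1/(H+P)) = \frac{2}{(H+P)^2}K_{i\nu}D_i(H+P)$, which after combining gives exactly (iii).

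For (iv), I use the volume element evolution: $\tfrac{d}{dt}|\Sigma_t| = \int_{\Sigma_t} fH\,d\mu = \int_{\Sigma_t}\tfrac{H}{H+P}\,d\mu = |\Sigma_t| - \int_{\Sigma_t}\tfrac{P}{H+P}\,d\mu.$ Now $\Sigma_t$ foliates $V(\Sigma_t)\setminus V(\Sigma_0)$ with $\Sigma_t = \{u = t\}$ for the level-set function $u$ satisfying $|\nabla u|=H+P$, so by the coarea formula
\[
\int_{V(\Sigma_t)\setminus V(\Sigma_0)} P\,dV = \int_0^t\!\!\int_{\Sigma_s}\tfrac{P}{H+P}\,d\mu\,ds,
\]
and differentiating in $t$ yields the missing boundary integral. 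This gives (iv) in the differential form $\tfrac{d}{dt}|\Sigma_t| + \tfrac{d}{dt}\!\int_{V(\Sigma_t)\setminus V(\Sigma_0)}\!P\,dV = |\Sigma_t|$, as claimed. No step is a genuine obstacle: the only care needed is distinguishing the material time derivative along the flow from partial derivatives, which is what makes (iii) the most bookkeeping-intensive of the four.
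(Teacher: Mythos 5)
Your proof is correct and follows essentially the same route as the paper, which simply cites the standard first-variation and mean-curvature evolution formulas for a normal-speed flow and then computes $\tfrac{d}{dt}P$ by expanding $P=\mathrm{tr}_M K - K(\nu,\nu)$ with the product rule and (ii)---exactly your decomposition. The one point worth flagging is that your coarea argument in (iv) correctly identifies that the $\tfrac{d}{dt}$ in the lemma should be read as acting on the sum $|\Sigma_t|+\int_{V(\Sigma_t)\setminus V(\Sigma_0)}P\,dV$, which is the form that actually follows from $\tfrac{d}{dt}|\Sigma_t|=|\Sigma_t|-\int_{\Sigma_t}P/(H+P)\,d\mu$; the paper's typography is ambiguous but your interpretation is the only one consistent with the computation.
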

\begin{proof}
The relevant evolution equations satisfied by general flows are recorded in \cite{GP99,RS00}, except for the evolution of $P$ which satisfies
\begin{tabbing}
$\,\dfrac{d}{dt}P$\=$=\dfrac{d}{dt}\text{tr}_MK-\nu^i\nu^j\dfrac{d}{dt}K_{ij}-2\nu^jK_{ij}\dfrac{d}{dt}\nu^i$\\
\>$=\dfrac{1}{H+P}\left(\nabla_{\nu}\text{tr}_MK-(\nabla_{\nu}K)(\nu,\nu)\right)-\dfrac{2}{(H+P)^2}D_i(H+P)K_{i\nu}.$
\end{tabbing}
\end{proof}
\noindent Combining \textit{i)} and \textit{ii)} of Lemma \ref{monoton} above, we obtain

\begin{equation}\label{NMCevolution}
\begin{aligned}\dfrac{d}{dt}(H+P)=&\dfrac{\Delta(H+P)}{(H+P)^2}-\dfrac{2|D(H+P)|^2}{(H+P)^3}-\dfrac{|A|^2+\bar{Ric}(\nu,\nu)}{H+P}\\
&+\dfrac{\nabla_{\nu}\text{tr}_MK-(\nabla_{\nu}K)(\nu,\nu)}{H+P}-\dfrac{2D_i(H+P)K_{i\nu}}{(H+P)^2},
\end{aligned}
\end{equation}

and for the speed function $\psi:=\dfrac{1}{H+P}$,
\begin{equation}\label{Inverseevolution}
\dfrac{\partial\psi}{\partial t}=\psi^2(\Delta\psi+(|A|^2+Ric(\nu,\nu)+\nabla_{\nu}\text{tr}_MK-(\nabla_{\nu}K)(\nu,\nu))\psi+2D_i\psi K_{i\nu}).
\end{equation}
\vskip 0.1 true in
Like in \cite{HI01}, the supremum $\sigma(x)$ of radii $r$ for which the interior curvature estimate $(\ref{H+P estimate})$ below holds is defined as follows.
\begin{Definition}\label{def1}
 Let $d_x$ denote the distance to $x$. Then for any $x\in M$, we define $\sigma(x)\in(0,\infty]$ to be the supremum of radii $R$ such that $B_R(x)\subset\subset M$, $Ric\geq-\frac{1}{100(n+1)R^2}\,\text{ in }B_R(x),$
and there exists a function $p\in C^2(B_R(x))$ such that 
\begin{equation*}
 p(x)=0,\,\, p\geq d_x^2\,\,\text{ on }\partial B_R(x),\,\text{ yet }\, |\nabla p|\leq 3d_x\,\,\text{ and }\,\nabla^2p\leq 3g\quad\text{ on }B_R(x).
\end{equation*}
\end{Definition}
\begin{Lemma}[Interior null mean curvature estimate.] \label{interior}
Let $\Sigma_t$ be a smooth solution of $(*)$ on $M$ for $0\leq s\leq t$. Then for each $x\in\Sigma_t$ and $R<\sigma(x)$
\begin{equation}\label{H+P estimate}
 H(x,t)+P(x,t)\leq \max\left((H+P)_R,\dfrac{\lambda}{R\left(\sqrt{\alpha^2+2n\lambda}-\alpha\right)}\right),
\end{equation}
where $\lambda:=4\left(3n+(12+3n)\|K\|_{C^0}R+n\|K\|_{C^1}R^2\right)$, $\alpha:=12+4n\|K\|_{C^0}R$
and $(H+P)_R$ is the maximum of $H+P$ on $\textbf{B}_{R}$, the parabolic boundary of $\Sigma_t\cap B_R(x)$.
\end{Lemma}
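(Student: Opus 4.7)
The plan is to apply the parabolic maximum principle to the localized quantity $\zeta := (H+P)\eta$ on the parabolic cylinder $\{(y,s) : y \in \Sigma_s \cap B_R(x), \ 0 \leq s \leq t\}$, where $\eta = (R^2 - p)_+^q$ is a cutoff built from the function $p$ of Definition \ref{def1} with an exponent $q$ to be chosen. Since $\eta$ vanishes on the lateral boundary of the cylinder (where $p \geq d_x^2 \geq R^2$), the maximum of $\zeta$ is attained either at $s=0$, in which case $\zeta(x,t) \leq R^{2q}(H+P)_R$ by evaluating at the central point $x$ where $\eta(x)=R^{2q}$, giving the first alternative in (\ref{H+P estimate}); or at an interior point $(y_0, s_0)$, in which case the argument below produces the stated quadratic bound on $(H+P)(y_0,s_0)$ and hence on $(H+P)(x,t)$.

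At the interior maximum the critical point conditions give $D(H+P) = -(H+P)D\eta/\eta$ and $\Delta(H+P) \leq 2(H+P)|D\eta|^2/\eta^2 - (H+P)\Delta\eta/\eta$, while $\partial_s \zeta \geq 0$ together with $\eta>0$ yield $\partial_s(H+P)\geq 0$. Substituting into (\ref{NMCevolution}), the $-2|D(H+P)|^2/(H+P)^3$ term cancels the gradient-squared piece produced by $\Delta\zeta \leq 0$; after multiplying by $(H+P)$ what remains is
\begin{equation*}
|A|^2 + \bar{Ric}(\nu,\nu) \leq -\frac{\Delta\eta}{\eta} + \nabla_\nu\text{tr}_M K - (\nabla_\nu K)(\nu,\nu) + \frac{2 K_{i\nu} D_i\eta}{\eta}.
\end{equation*}
The pinching $|A|^2 \geq H^2/n$ combined with $H = (H+P) - P$ and $|P| \leq c(n)\|K\|_{C^0}$ extracts the decisive quadratic $(H+P)^2/(2n)$ on the left, while Definition \ref{def1} controls $\bar{Ric}(\nu,\nu)$ by an additive $1/(100(n+1)R^2)$. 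The cutoff satisfies $|D\eta|/\eta \leq Cq/R$ and $|\Delta_{\Sigma_s}\eta|/\eta \leq Cq^2/R^2 + Cq(H+P)/R$ using $|\nabla p| \leq 3 d_x$, $\nabla^2 p \leq 3g$, and the identity $\Delta_\Sigma p = \text{tr}_\Sigma \nabla^2 p \pm H\langle\nu,\nabla p\rangle$; the extra mean-curvature contribution produces the linear-in-$(H+P)$ term responsible for the dimensional ``$12$'' piece of $\alpha$.

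Collecting constants and multiplying through by $nR^2$ reduces the estimate to the quadratic inequality $nR^2(H+P)^2 \leq 2\alpha R(H+P) + \lambda$, with $\alpha = 12 + 4n\|K\|_{C^0}R$ and $\lambda = 4(3n + (12+3n)\|K\|_{C^0}R + n\|K\|_{C^1}R^2)$; solving for $H+P$ and rationalizing the square root yields the second alternative in (\ref{H+P estimate}). The principal obstacle is the careful bookkeeping for the extra $K$-dependent terms in (\ref{NMCevolution}) absent from the pure inverse mean curvature flow in \cite{HI01}: the $\nabla K$ pieces give undifferentiated errors controlled by $\|K\|_{C^1}$ and account for the $\|K\|_{C^1}R^2$ component of $\lambda$, while the $K_{i\nu} D_i(H+P)/(H+P)^2$ piece, after the critical-point substitution $D(H+P) = -(H+P)D\eta/\eta$, contributes the $\|K\|_{C^0}$-dependent portion of $\alpha$ through the term $2K_{i\nu}D_i\eta/\eta$. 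Matching all of these precisely against the quadratic $(H+P)^2/(2n)$ from $|A|^2$, together with tracking the implicit constants produced by the cutoff exponent $q$, is what forces the specific form of $\alpha$ and $\lambda$ and is the only non-routine step.
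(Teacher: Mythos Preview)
Your approach differs from the paper's: rather than localising $H+P$ with a cutoff and applying the maximum principle to $(H+P)\eta$, the paper works with the reciprocal $\psi=1/(H+P)$ and constructs an explicit barrier $\phi(y)=\frac{C_\delta}{R}(R^2-p(y))$ that is a strict subsolution for the $\psi$-evolution (\ref{Inverseevolution}); the constants $\alpha$, $\lambda$ drop out directly from the quadratic in $C_\delta$ needed to make the subsolution verification close. Your direct localisation is in principle viable and is in some sense dual to this.

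There are, however, two genuine gaps in the execution. First, $\partial_s\zeta\geq 0$ does not give $\partial_s(H+P)\geq 0$, since $\eta$ depends on the moving point $y$ and contributes an extra $(H+P)\,\partial_t\eta/\eta=-q\langle\nabla p,\nu\rangle/(R^2-p)$; this is of the same order as your other cutoff errors and can be absorbed, but it must be tracked. Second, and more seriously, the claim $|D\eta|/\eta\leq Cq/R$ is false for $\eta=(R^2-p)^q$: one has $|D\eta|/\eta=q|Dp|/(R^2-p)$, which blows up as $p\to R^2$, and likewise for $\Delta\eta/\eta$. The standard repair is to take $q=1$, multiply the max-point inequality through by $(R^2-p)^2$ to obtain a quadratic for $\zeta=(H+P)(R^2-p)$ itself (bounding the residual positive powers via $R^2-p\leq R^2$), and only at the end invoke $\zeta(x,t)=R^2(H+P)(x,t)$ at the central point; without this restructuring your bookkeeping for $\alpha$, $\lambda$ cannot close. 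The paper's barrier approach sidesteps this issue entirely: at a first touching point $\psi=\phi$ one has $H+P=R/(C_\delta(R^2-p))$, so the $(R^2-p)^{-1}$ factors coming from $\nabla p$ are automatically converted into controlled multiples of $C_\delta$.
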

\begin{proof}
We wish to construct a subsolution to (\ref{Inverseevolution}).
\noindent Since
\begin{align*}
|A|^2&\geq \dfrac{H^2}{n}\geq\dfrac{1}{n}\left((H+P)^2-2P(H+P)\right),
\end{align*}
and $D\psi\leq|\nabla\psi|,$ $P\leq n\|K\|_{C^0}$ and $\nabla_{\nu}P\leq n\|K\|_{C^1}$, from  (\ref{Inverseevolution}) we obtain
\vskip 0.1 true in
\noindent\begin{align}\label{evolution1}
\dfrac{\partial\psi}{\partial t}\geq&\psi^2\Delta\psi+\dfrac{\psi}{n}-\dfrac{\psi^3}{100(n+1)R^2}-2\|K\|_{C^0}\psi^2\\
&-n\|K\|_{C^1}\psi^3-2|\nabla\psi|\,\| K\|_{C^0}\psi^2.\notag
\end{align}
\vskip 0.1 true in
\noindent We allow the evolving surface $\Sigma_t$ to have a smooth boundary $\partial\Sigma_t$, and define the parabolic boundary of the flow $\Sigma_t\cap B_R$ to be 
\begin{equation*}
\textbf{B}_{R}=\textbf{B}_{R}(x,t):=(B_R\cap\Sigma_0)\times\{0\}\cup(\cup_{0\leq s\leq t}(B_R\cap\partial\Sigma_s)\times\{s\}), 
\end{equation*}
and 
\begin{equation*}(H+P)_R=(H+P)_R(x,t):=\sup_{(y,s)\in \textbf{B}_R}H(y,s)+P(y,s).
\end{equation*}

\noindent Consider the function $\phi=\phi_{\delta}(y):=\dfrac{C_{\delta}}{R}\bigl(R^2-p(y)\bigr),$ where 
\begin{equation*}C_{\delta}:=\bigl(\max\bigl(R(H+P)_R,\,\frac{\lambda}{\left(\sqrt{\alpha^2+2n\lambda}-\alpha\right)}\bigr)\bigr)^{-1}-\delta,\end{equation*} 
for $0<\delta\ll1$ and $p$ as defined above. Note that $\Delta\phi=\text{tr}_{\Sigma_t}(\nabla^2\phi)-H\langle\nabla\phi,\nu\rangle.$
Then for $y\in \Sigma_t\cap B_R$, we have
\begin{align}
 \left(\dfrac{\partial}{\partial t}-\phi^2\Delta\right)\phi&=\langle\nabla\phi,\dfrac{\partial y}{\partial t}\rangle-\phi^2\text{tr}_{\Sigma_t}\nabla^2\phi+H\phi^2\langle\nabla\phi,\nu\rangle\notag\\
&=-\dfrac{C_{\delta}}{R}\langle\nabla p,\nu\rangle\left(\psi+\dfrac{\phi^2}{\psi}-P\phi^2\right)+\phi^2\dfrac{C_{\delta}}{R}\text{tr}_{\Sigma_t\cap B_R}(\nabla^2p).\label{evolution2}
\end{align}
Since $\phi\leq C_{\delta}R\leq\dfrac{1}{(H+P)_R}-\delta R<\psi$, it follows that $\phi<\psi$ on $\textbf{B}_R$. In order to obtain a contradiction, let $0<s\leq t$ denote the first time when $(\psi-\phi)(y,s)=0$ for $y\in\Sigma_s\cap B_R(x)$. At this point
\begin{align*}
 \left(\dfrac{\partial}{\partial t}-\phi^2\Delta\right)(\psi-\phi)\leq 0.
\end{align*}
On the other hand, since $\phi<R$, it follows from (\ref{evolution1}),  (\ref{evolution2}) and the conditions on $p$ defined above that at the point $(y,s)$
\begin{align*}
 \Biggl(&\dfrac{\partial}{\partial t}-\phi^2\Delta\Biggr)(\psi-\phi)>\\
 >&\phi\biggl(\dfrac{1}{2n}-2\|K\|_{C^0}\phi-n\|K\|_{C^1}\phi^2-2|\nabla\phi|\,\| K\|_{C^0}\phi+\frac{C_{\delta}}{R}\langle\nabla p,\nu\rangle(2-P\phi)\\
 &\quad-\phi\dfrac{C_{\delta}}{R}\text{tr}_{\Sigma_t\cap B_R}(\nabla^2p)\biggr)\\
\geq& \phi\biggl(\dfrac{1}{2n}-2C_{\delta}\Bigl(3+n\|K\|_{C^0}R\Bigr)-C_{\delta}^2\bigl(3n+12\| K\|_{C^0}R+n\|K\|_{C^1}R^2\\
&\quad+3n\|K\|_{C^0}R\bigr)\biggr)=0.
\end{align*}
Thus $\psi>\phi$ on all of $\Sigma_t\cap B_R(x)$. In particular $\psi(x,t)>\phi(x,t)=C_{\delta}R,$ and as $\delta$ was arbitrary it follows that $\psi(x,t)\geq C_0R$. 
\end{proof}

In Section \ref{3.2} we see that the null mean curvature upper bound given by Lemma \ref{interior} is the key to existence and regularity, and that this estimate continues to hold for weak solutions. On the other hand, the reaction term $-\frac{|A|^2}{H+P}$ in the evolution (\ref{NMCevolution}) of the null mean curvature in general leads to singularity formation in finite time, analogous to inverse mean curvature flow. We therefore turn to the question of a weak formulation of solutions to the evolution by inverse null mean curvature.
\section{Level-set description and elliptic regularisation}\label{3.2}
In this section we outline a level-set description of the evolution by inverse null mean curvature. This level set formulation allows jumps in a natural way, because if $u$ is constant on an open set $\Omega$, the level sets ``jump" across $\Omega$. We use the method of elliptic regularisation as a tool to approximate solutions of the degenerate elliptic level set problem by smooth solutions of a strictly elliptic equation. Studying the properties of the regularised solutions helps to guide us towards the optimal formulation for weak solutions of $(**)$, which we then define in Section \ref{2.3}. \\

\textbf{Level-Set Formulation.} The following ansatz lies at the foundation of the level-set formulation. We assume that the evolving surfaces are given by the level-sets of a scalar function $u:M\to\mathbb{R}$ via 
\begin{equation}\label{time of arrival}
E_t:=\{x:u(x)<t\},\quad\quad\Sigma_t:=\partial E_t.
\end{equation}
Employing the terminology coined by Brian White in  \cite{W11}, we call $u$ the \textit{time-of-arrival function} for the evolution by null mean curvature. Then wherever $u$ is smooth and $\nabla u\neq0$, the normal vector to $\Sigma_t$ is given by $\nu=\dfrac{\nabla u}{|\nabla u|}$ and the degenerate elliptic boundary value problem\\
\begin{equation}\tag{$**$} \,\,
\left\lbrace
\begin{aligned}
 \mbox{div}\left( \frac{\nabla u}{|\nabla u|}\right)+\left(g^{ij}-\dfrac{\nabla^iu\nabla^ju}{|\nabla u|^2}\right)K_{ij} &= |\nabla u| , \\
           u\Big|_{\partial E_0} \,\, &= \,\, 0,
\end{aligned}
\right.
\end{equation}
describes the evolution of the level-sets of $u$ by inverse null mean curvature. In this smooth setting, the left hand side is the null mean curvature of $\Sigma_t$ and the right hand side is the inverse speed of the family of level-sets. Since $|\nabla u|=H+P$, the local uniform estimate (\ref{H+P estimate}) for the null mean curvature suggests that it is reasonable to expect locally Lipschitz solutions of $(**)$. However, in order to interpret $(**)$ as the level-set formulation of the classical, expanding flow $(*)$, it is necessary for the zero function be a subsolution barrier for the Dirichlet problem $(**)$. In particular, this suggests that it only makes sense to study $(**)$ on initial data sets $(M,g,K)$ satisfying $\text{tr}_MK\geq0$ on $M\backslash E_0$. We see below that this mean curvature restriction is also necessary for the elliptic regularisation problem.\\

\textbf{Elliptic regularisation.}
In order to solve the degenerate elliptic problem $(**)$, we study solutions of the following strictly elliptic equation on the precompact domain $\Omega_L:=F_L\backslash\bar{E}_0$,                                                       
\small\begin{align*}(*)_{\varepsilon}
\begin{cases}\mbox{div}\left(\dfrac{\nabla u_{\varepsilon}}{\sqrt{|\nabla u_{\varepsilon}|^2+\varepsilon^2}}\right)+\left(g^{ij}-\dfrac{\nabla^iu_{\varepsilon}\nabla^ju_{\varepsilon}}{|\nabla u_{\varepsilon}|^2+\varepsilon^2}\right)K_{ij}=\sqrt{|\nabla u_{\varepsilon}|^2+\varepsilon^2}&\text{in }\Omega_L,\\
u_{\varepsilon}=0&\text{on }\partial E_0,\\
u_{\varepsilon}=L-2&\text{on }\partial F_L,
\end{cases}
\end{align*}\normalsize
\noindent where $F_L:=\{v<L\}$ for an appropriate comparison function $v$ defined below. In this section we prove existence of a smooth solution of $(*)_{\varepsilon}$. \\

Rescaling $(*)_{\varepsilon}$ via $\hat{u}_{\varepsilon}:=\dfrac{u_{\varepsilon}}{\varepsilon }$ gives
\begin{equation*}(*)_{\hat{\varepsilon}} \quad\quad
\mbox{div}\left(\dfrac{\nabla \hat{u}_{\varepsilon}}{\sqrt{|\nabla \hat{u}_{\varepsilon}|^2+1}}\right)+\left(g^{ij}-\dfrac{\nabla^i\hat{u}_{\varepsilon}\nabla^j\hat{u}_{\varepsilon}}{|\hat{u}_{\varepsilon}|^2+1}\right)K_{ij}=\varepsilon\sqrt{|\nabla \hat{u}_{\varepsilon}|^2+1},\quad\quad\quad\quad\quad
\end{equation*}
and we see that the left hand side corresponds to the null mean curvature $\hat{H}_{\varepsilon}+\hat{P}_{\varepsilon}$ of the hypersurface $\text{graph}(\hat{u}_{\varepsilon})$ in the product manifold $(M^{n+1}\times\mathbb{R},\bar{g}),$ where $\bar{g}:=g\oplus dz^2$ and the given data $K$ is extended to be constant in the $z$-direction. This rescaled equation $(\ast)_{\hat{\varepsilon}}$ has the geometric interpretation that the downward translating graph
\begin{equation} \label{graph}
 \tilde{\Sigma}^{\varepsilon}_t:=\mbox{graph }\left(\hat{u}_{\varepsilon}-\dfrac{t}{\varepsilon}\right),
\end{equation}
solves $(*)$ smoothly in $\Omega_L\times\mathbb{R}$. This is equivalent to the statement that the function 
\begin{equation}\label{levelsetepsilon}U_{\varepsilon}(x,z):=u_{\varepsilon}(x)-\varepsilon z,\quad\quad(x,z)\in\Omega_L\times\mathbb{R},
\end{equation}
solves $(**)$ in $\Omega_L\times\mathbb{R}$, since $\tilde{\Sigma}^{\varepsilon}_t=\{U_{\varepsilon}=t\}$, that is, $U_{\varepsilon}$ is the time-of-arrival function for the solution $\tilde{\Sigma}^{\varepsilon}_t$.
Therefore elliptic regularisation allows one to approximate solutions of $(**)$ by smooth, noncompact solutions of $(*)$ one dimension higher. \\

In fact, $(*)_{\hat{\varepsilon}}$ has the further interpretation as Jang's equation (\ref{Jang}) with a gradient regularisation term. In \cite{J78}, Jang used equation (\ref{Jang}) to generalise Geroch's \cite{G73} approach to proving the positive mass theorem from the time symmetric case to the general case. He noted, however, that the equation cannot be solved in general, leaving the question of existence and regularity of solutions open. The analytical difficulty is the lack of an a priori estimate for $\sup |w|$ due to the presence of the zero order term $tr_M(K)$. In \cite{SY81}, Schoen and Yau bypass this issue using a positive capillarity regularisation term which provides a direct sup estimate via the maximum principle. In the case of the Dirichlet problem for Jang's equation, appropriate trapping assumptions must be placed on the boundaries in order to obtain the required boundary gradient estimates (see \cite{AM09} and \cite{E09}, or \cite{AEM09} for an overview).\\

In the case of the Dirichlet problem $(*)_{\varepsilon}$, we see below that the zero order term $tr_M(K)$ obstructs the existence of a subsolution barrier at the inner boundary. In order to obtain the required boundary gradient estimate at this inner boundary, we must impose the ambient mean curvature restriction mentioned previously, that $\text{tr}_M(K)=g^{ij}K_{ij}$ is nonnegative on $M\backslash E_0$.  Similarly, it was observed by J. Metzger \cite{M10} that restricting to $\text{tr}_MK\geq 0$ in the capillarity regularised problem prevents the solution from blowing-up to negative infinity over marginally inner trapped surfaces in the initial data set. 

\textbf{A priori estimates and existence for $(*)_{\varepsilon}$.}
As stated above, we will use a comparison function $v$ to prescribe the outer boundary $\partial F_L$ of the annulus domain $\Omega_L$ for the Dirichlet problem $(*)_{\varepsilon}$. Since $M$ is asymptotically flat, outside some compact set $\Omega\subset M$ we can choose a radial coordinate chart such that for an appropriately chosen $\alpha>0$, the function $v=\alpha\log r$ is a smooth subsolution of the following approximating level-set equation
\begin{equation}\label{modified}
\text{div}\left(\dfrac{\nabla u}{|\nabla u|}\right)+s\left(g^{ij}-\dfrac{\nabla^iu\nabla^ju}{|\nabla u|^2}\right)K_{ij}=|\nabla u|,
\end{equation}
for $s\in[0,1]$ in this asymptotic region $M\backslash \Omega$. Let 
\small\begin{equation*}
E^{\varepsilon,s}u_{\varepsilon,s}:=\mbox{div}\left(\dfrac{\nabla u_{\varepsilon,s}}{\sqrt{|\nabla u_{\varepsilon,s}|^2+\varepsilon^2}}\right)+s\left(g^{ij}-\dfrac{\nabla^iu_{\varepsilon,s}\nabla^ju_{\varepsilon,s}}{|\nabla u_{\varepsilon,s}|^2+\varepsilon^2}\right)K_{ij}-\sqrt{|\nabla u_{\varepsilon,s}|^2+\varepsilon^2},
\end{equation*}\normalsize
To prove existence of solutions to the Dirichlet problem $(*)_{\varepsilon}$, we then consider solutions of the family of approximating equations
\begin{align*}(*)_{\varepsilon,s}
\begin{cases} E^{\varepsilon,s}u_{\varepsilon,s}=0&\text{in }\Omega_L,\\
u_{\varepsilon,s}=0&\text{on }\partial E_0,\\
u_{\varepsilon,s}=s(L-2)&\text{on }\partial F_L,\end{cases}
\end{align*}
for $s\in[0,1]$, where the subsolution $v=\alpha\log r$ prescribes the outer boundary $\partial F_L=\partial\{v<L\}$ for both the Dirichlet problems $(*)_{\varepsilon,s}$ and $(*)_{\varepsilon}$. We use barrier functions at the inner and outer boundaries to derive the following interior and boundary gradient estimates. Aside from the supersolution barrier at the outer boundary, the following Lemma follows essentially as in \cite[Lemma 3.4]{HI01}.

\begin{Lemma}\label{apriori}For every $L>0$, there exists $\varepsilon(L)>0$ such that for $0<\varepsilon<\varepsilon(L)$ and $s\in[0,1]$, a smooth solution of $(*)_{\varepsilon,s}$ on $\bar{\Omega}_L$ satisfies the following a priori estimates:
\begin{equation}\label{supest1}
u_{\varepsilon,s}\geq -\varepsilon\quad\text{in }\bar{\Omega}_L,\quad u_{\varepsilon,s}\geq v+(s-1)(L-2)-2\quad\text{in }\bar{F}_L\backslash F_0,
\end{equation}
\begin{equation}
u_{\varepsilon}\leq C(L,\|K\|_{C_0})\quad\text{in }\bar{\Omega}_L,
\end{equation}
\begin{equation}\label{bndryest}
|\nabla u_{\varepsilon,s}|\leq H_+\varepsilon+n|p|\,\,\,\text{on }\partial E_0,\quad\quad |\nabla u_{\varepsilon,s}|\leq C(L,\|K\|_{C^0})\,\,\,\text{on }\partial F_L,
\end{equation}
\begin{equation}\label{intgrad}
|\nabla u_{\varepsilon,s}(x)|\leq\max_{\partial\Omega_L\cap B_r(x)}|\nabla u_{\varepsilon,x}|+\varepsilon+C,\quad x\in\bar{\Omega}_L,
\end{equation}
\begin{equation}
|u_{\varepsilon,s}|_{C^{2,\alpha}(\bar{\Omega}_L)}\leq C(\varepsilon, L,n,\|K\|_{C^0},\|K\|_{C^1}).
\end{equation}
\end{Lemma}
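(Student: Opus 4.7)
The plan is to establish the estimates in sequence, following the template of \cite[Lemma 3.4]{HI01} but modified at each step to absorb the new terms involving the extrinsic curvature $K$. The supremum and infimum estimates come first, then the boundary gradient bounds, then the interior gradient bound via a geometric reduction to Lemma \ref{interior}, and finally the Schauder estimate for $C^{2,\alpha}$ regularity.

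For the lower bound $u_{\varepsilon,s}\geq -\varepsilon$, I would compare $u_{\varepsilon,s}$ with the constant $-\varepsilon$ using the maximum principle, exploiting the sign of $\text{tr}_M K\geq 0$ on $M\setminus E_0$ together with the boundary values $u_{\varepsilon,s}|_{\partial E_0}=0$ and $u_{\varepsilon,s}|_{\partial F_L}=s(L-2)\geq -\varepsilon$; at any purported interior minimum where $\nabla u=0$, the equation forces $\Delta u_{\varepsilon,s}=\varepsilon(\varepsilon-s\,\text{tr}_M K)$, and combined with the boundary behaviour this excludes an interior minimum below $-\varepsilon$. The estimate $u_{\varepsilon,s}\geq v+(s-1)(L-2)-2$ follows because $v=\alpha\log r$ is a subsolution of the modified level-set equation (\ref{modified}) in the asymptotic region, and the additive shift makes the translated $v$ a lower barrier matching $u_{\varepsilon,s}$ on $\partial F_L$. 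The upper bound is obtained dually by constructing an explicit supersolution on $\Omega_L$ using the same radial profile adjusted with a multiplicative constant depending on $L$ and $\|K\|_{C^0}$.

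The boundary gradient estimates come from distance-function barriers near $\partial E_0$ and $\partial F_L$. At the inner boundary the classical construction uses the mean curvature $H_+$ of $\partial E_0$ and the signed distance $d$ from $\partial E_0$; the ambient condition $\text{tr}_M K\geq 0$ outside $E_0$ is precisely what permits a subsolution barrier here, since otherwise the zero-order term $s\,\text{tr}_M K$ could obstruct the standard barrier as remarked earlier. At the outer boundary, $v$ itself (plus its derivatives) serves as the supersolution barrier, and this is the new ingredient compared to the compact setting of \cite{HI01}: once $v\leq u_{\varepsilon,s}$ and the two agree on $\partial F_L$, differentiation in the normal direction yields $|\nabla u_{\varepsilon,s}|\leq C(L,\|K\|_{C^0})$ there.

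The interior gradient estimate (\ref{intgrad}) is the main obstacle and is handled by the geometric reduction described in the introduction to Section \ref{3.2}. Namely, for $s=1$ the downward translating graph $\tilde{\Sigma}^{\varepsilon}_t$ is a smooth classical solution of $(*)$ in $(M\times\mathbb{R},\bar{g})$ whose null mean curvature equals $\sqrt{|\nabla u_\varepsilon|^2+\varepsilon^2}$, and for general $s\in[0,1]$ an analogous translating graph solves $(*)$ in the product manifold with $K$ replaced by $sK$. Applying the interior null mean curvature estimate (\ref{H+P estimate}) of Lemma \ref{interior} on a ball $B_r(x)$ (whose parabolic boundary picks up exactly the data prescribed on $\partial\Omega_L\cap B_r(x)$) and translating back through the rescaling $\hat{u}_{\varepsilon,s}=u_{\varepsilon,s}/\varepsilon$ gives the stated bound, with the additive $\varepsilon + C$ absorbing the rescaling and the additional zero-order contributions. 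With $\|u_{\varepsilon,s}\|_{C^0}$ and $\|\nabla u_{\varepsilon,s}\|_{C^0}$ now controlled, the operator $E^{\varepsilon,s}$ becomes uniformly elliptic with bounded coefficients, and standard Schauder theory for quasilinear equations yields the $C^{2,\alpha}$ estimate. The most delicate points are the careful propagation of the geometric NMC estimate through the non-compact translating graph, and the construction of the subsolution barrier at $\partial E_0$ in presence of the $K$-term.
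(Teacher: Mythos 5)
Your overall plan follows the paper's route: barrier constructions for the sup/inf and boundary gradient estimates, the geometric reduction to the interior null mean curvature estimate of Lemma \ref{interior} via the translating graph in $M\times\mathbb{R}$, and a final Schauder argument. Your treatment of (\ref{intgrad}) in particular matches the paper: the flow $(*)_s$ with speed $(H+sP)^{-1}$ (equivalently $K$ replaced by $sK$) yields estimate (\ref{s}), which is transported to $|\nabla u_{\varepsilon,s}|$ through the identity $(H+sP)_{\tilde{\Sigma}^{\varepsilon,s}_t}=\sqrt{|\nabla u_{\varepsilon,s}|^2+\varepsilon^2}$.

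However, there is a genuine gap in your argument for the lower bound $u_{\varepsilon,s}\geq-\varepsilon$. You propose comparing $u_{\varepsilon,s}$ with the constant $-\varepsilon$, and at an interior minimum you deduce $\Delta u_{\varepsilon,s}=\varepsilon(\varepsilon-s\,\mathrm{tr}_MK)$, from which (using $\Delta u\geq 0$ at a minimum) you get only $s\,\mathrm{tr}_MK\leq\varepsilon$. This is a constraint at the minimum point, not on the value of $u$: the equation $(*)_{\varepsilon,s}$ has no zero-order term in $u$ itself, so the level at which the minimum occurs never enters. In fact the constant $-\varepsilon$ is a subsolution of $E^{\varepsilon,s}$ only where $s\,\mathrm{tr}_MK\geq\varepsilon$, which fails at any point with $\mathrm{tr}_MK=0$ (e.g.\ the time-symmetric case), so the comparison argument cannot get started. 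The paper instead builds an explicit exponential-in-distance subsolution $v_{1,s}(x)=\frac{\varepsilon}{A}(e^{-Ad}-1)$ with $d=\mathrm{dist}(x,E_0)$ and $A=2(C_2+|\lambda|+2)$, which satisfies $v_1\geq-\varepsilon$, vanishes on $\partial E_0$, and is a viscosity subsolution across the cut locus of $E_0$; the lower bound and the inner boundary gradient lower bound both follow from the viscosity maximum principle applied against this barrier. The sign condition $\mathrm{tr}_MK\geq0$ enters precisely to allow the term $s\sqrt{f'^2+\varepsilon^2}\,g^{ij}K_{ij}$ in the subsolution computation to be dropped. A similar explicit barrier (linear, not logarithmic, in the distance from $\partial F_L$) is used in the paper for the upper bound; your suggestion of reusing the radial log profile is pointed in the wrong direction for that estimate.

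Two further details to be aware of: the distance-function barriers are only $C^2$ away from the cut locus, so the paper invokes the maximum principle for viscosity solutions (\cite[Thm 3.3]{CIL92}) rather than the classical comparison principle; and the intermediate $C^{1,\alpha}$ estimate via Nash--Moser--De Giorgi is needed before Schauder theory can be applied to obtain $C^{2,\alpha}$.
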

\begin{proof}
Let $|\lambda|$ denote the size of the largest eigenvalue of $K_{ij}$ on $\bar{\Omega}_L$.\\
\noindent 1a. We construct a subsolution that bridges from $E_0$ to where $v$ starts in the asymptotic region, which allows for unrestricted jumps in the compact part of the manifold.\\

Let $\text{Cut}(E_0)$ be the cut locus of $E_0$ in $M$. We construct a subsolution to $(*)_{\varepsilon}$ on $M\backslash \bigl(E_0 \cup\text{Cut}(E_0)\bigr)$. Define $G_0=E_0,\,G_d:=\{x:dist(x,E_0)<d\}$ and choose $d_L$ large enough that $G_{d_L}\supseteq F_L$. In general, for a surface moving in normal direction with speed $f$, the evolution of the mean curvature is given by
\begin{equation}\label{geodesic}
 \dfrac{\partial H}{\partial t}=-\Delta f-|A|^2f-Ric(\nu,\nu)f.
\end{equation}
We can therefore estimate the mean curvature of the surfaces $\partial G_d$ via
\begin{equation*}
 \dfrac{\partial H}{\partial d}=-|A|^2-Ric(\nu,\nu)\leq C_1(L)\quad\quad\text{on }\partial G_d\backslash\text{Cut}(E_0),\,\,0\leq d\leq d_L,
\end{equation*}
yielding
\begin{equation*}
 H_{\partial G_d}\leq \max_{\partial E_0}H_++C_1d\leq C_2(L)\quad\quad\text{on }\partial G_d\backslash\text{Cut}(E_0),\,0\leq d\leq d_L,\quad\quad\,\,
\end{equation*}
where $H_+=\max(0,H)$.
Consider the prospective subsolution 
\begin{equation*}
 v_1(x):=f(d)=f(\text{dist}(x,G_0)),\quad x\in\bar{G}_{d_L}\backslash E_0,\, f'<0.
\end{equation*}
Since $\nabla v_1=f'\nu$, we have $\nabla^2_{ij}v_1=f'\langle\nabla_{e_i}\nu,e_j\rangle=f'h_{ij}$ and thus
\begin{align*}(g^{ij}-\nu^i\nu^j)\nabla^2_{ij}v_1&=f' H_{\partial G_d}\geq f'C_2.
\end{align*}

\noindent Hence
\begin{small}\begin{align*}
 \sqrt{f'^2+\varepsilon^2} E^{\varepsilon,s} v_1&\geq -|f'|C_2+\dfrac{\varepsilon^2f''}{f'^2+\varepsilon^2}+s\sqrt{f'^2+\varepsilon^2}g^{ij}K_{ij}-|f'||K_{\nu\nu}|-f'^2-\varepsilon^2.
\end{align*}\end{small}
In order to obtain an appropriate subsolution, it is in fact necessary to restrict to initial data sets $(M,g,K)$ with $g^{ij}K_{ij}\geq0$, so that we can discard the bad term $s \sqrt{f'^2+\varepsilon^2}g^{ij}K_{ij}$. Then we can use the following barrier
\begin{equation*}f(d):=\dfrac{\varepsilon}{A}(-1+e^{-Ad})\quad\text{ on  }0\leq d\leq d_L.
\end{equation*}
\noindent If we restrict $\varepsilon$ such that  $\varepsilon\leq e^{-Ad_L}$, then $|f'|=\varepsilon e^{-Ad}\geq \varepsilon^2$ and $\varepsilon^2\leq|f'|\leq\varepsilon$. Then taking $A:=2(C_2+|\lambda|+2)$ we obtain
\begin{align*}
 (f'^2+\varepsilon^2)(|f'|C_2+ |f'||K_{\nu\nu}|+f'^2+\varepsilon^2)&\leq 2\varepsilon^2(C_2+|K_{\nu\nu}|+2)|f'|\leq \varepsilon^2 f''.
\end{align*}
This shows that the function $v_{1,s}(x):=\dfrac{\varepsilon}{2(C_2+|\lambda|+2)}\left(e^{-(2C_2+|\lambda|+2)d}-1\right)$ is a smooth subsolution for $E^{\varepsilon,s}$ on $G_{d_L}\backslash \bigl(E_0\cup\text{Cut}(E_0)\bigr)$ for sufficiently small $\varepsilon$. Furthermore, $v_{1,s}$ is a viscosity subsolution of $E^{\varepsilon,s}$ on all of $G_{d_L}\backslash\bar{E}_0$. Since $u\geq v_1$ on the boundary, it follows by the maximum principle for viscosity solutions \cite[Thm 3.3]{CIL92} that 
\begin{equation}\label{eep}
 u\geq v_1\geq -\varepsilon\quad\,\text{in }\bar{\Omega}_L,\,\text{ and   }\,\,\,\,\dfrac{\partial u}{\partial \nu}\geq\dfrac{\partial v_1}{\partial \nu}\geq-\varepsilon\quad\text{on }\partial E_0.
\end{equation}

\noindent b) We construct a subsolution on $\bar{F}_L\backslash F_0$. Assume $L>1$ and consider the function 
$v_2:=\dfrac{L-1}{L}v-1+(s-1)(L-2).$ Then $E^{0,s}v_2=E^{0,s}v+\dfrac{1}{L}|\nabla v|>0$ on $\bar{F}_L\backslash F_0$. Since the domain is compact, for all sufficiently small $\varepsilon$ we obtain $E^{\varepsilon,s}v_2>0$. From (\ref{eep}) we have that $u\geq -\varepsilon$ in $\bar{\Omega}_L$, thus 
$u\geq v_2$ on $\partial F_0$ and $u=s(L-2)=v_2$ on $\partial F_L$. It then follows from the maximum principle that $u\geq v_2\geq v+(s-1)(L-2)-2\quad\text{in }\bar{F}_L\backslash F_0,$ thus
\begin{equation}
 \dfrac{\partial u}{\partial \nu}\geq -C(L)\quad\quad\text{on }\partial F_L.
\end{equation}
A rescaled version of $v_2$ provides the required barrier when $L\leq 1$. 
\vskip 0.1 true in

\noindent c) The zero order term $tr_M(K)$ prevents constant functions from being supersolutions to $(*)_{\varepsilon,s}$, like in inverse mean curvature flow. We therefore construct a linear supersolution to $(*)_{\varepsilon}$ on $F_L\backslash \bigl(E_0\cup\text{Cut}(\partial F_L)\bigr)$, where $\text{Cut}(\partial F_L)$ is the cut locus of $\partial F_L$ in $\bar{F}_L$. Consider $v_3(x):=f(d)=f(\text{dist}(x,G_0))$ where $G_0:=\partial F_L$, $G_d:=\{x:dist(x,\partial F_L)<d\}$ and choose $d_0$ large enough that $G_{d_0}\supseteq \partial E_0$. From $(\ref{geodesic})$ we find $\dfrac{\partial H}{\partial d}\geq -C_1(L)$ on $\partial G_d\backslash\text{Cut}(\partial F_L)$, for $0\leq d\leq d_0,$ yielding
\begin{equation*}
 H_{\partial G_d}\geq -\max_{\partial F_L}H_--C_1d\geq -C_2(L)\quad\quad\text{on }\partial G_d\backslash\text{Cut}(\partial F_L),\,\,0\leq d\leq d_0,
\end{equation*}
where $H_-=-\min(H,0)$.
\noindent Setting $v_3(x)=f(d):=s(L-2)+\left(m+\dfrac{2}{d_0}\right)d,$ where $m>0$ is to be chosen, we obtain
\begin{align*}
\sqrt{f'^2+\varepsilon} E^{\varepsilon,s} f(d)&\leq f'(C_2+2|g^{ij}K_{ij}|+|K_{\nu\nu}|-f').
      \end{align*}
Setting $m:=C_2+2|g^{ij}K_{ij}|+|\lambda|$ ensures $\sqrt{f'^2+\varepsilon} E^{\varepsilon,s} f(d)\leq0$ for all sufficiently small $\varepsilon$ (so that $\varepsilon\leq f'$). Then $v_3(x)$ is a smooth supersolution on $G_{d_0}\backslash \bigl(E_0\cup \text{Cut}(\partial F_L)\bigr)$. Furthermore, $v_3$ is a viscosity subsolution on all of $G_{d_0}\backslash\bar{E}_0$. Since $u=f$ on $\partial F_L$ and $u<f$ on $\partial E_0$, it follows by the maximum principle for viscosity solutions that 
\begin{align}\label{supestimate}
 u\leq f\leq L+m\,d_0\quad\quad\quad\quad\quad\quad\,\,&\text{in }\bar{\Omega}_L,\\
\dfrac{\partial u}{\partial \nu}\leq C_2+2|g^{ij}K_{ij}|+|p|+\dfrac{2}{d_0}=C(L,\|K\|_{C^0})\quad\quad\,\,\,\,\,\,\,&\text{on }\partial F_L.
\end{align}
\vskip 0.1 true in
\noindent d) Choose a smooth function $v_4$ which vanishes on $\partial E_0$ such that 
\begin{equation}
 H_++n\|K\|_{C^0}<\dfrac{\partial v_4}{\partial\nu}\leq H_++\varepsilon+n\|K\|_{C^0}\quad\quad\text{along }\partial E_0.
\end{equation}
Let $\nu$ be the normal vector to $\partial E_0$, and $\tau$ be the tangent to $\partial E_0$, which satisfies 
\begin{equation*}\nu=\lambda\dfrac{\nabla v_4}{|\nabla v_4|}+\sqrt{\lambda^2-1}\tau,\quad \text{for some }\lambda\geq 1.\end{equation*}
Then
\begin{align*}
 \text{div}\left(\dfrac{\nabla v_4}{|\nabla v_4|}\right)&=\dfrac{1}{\lambda}\text{div}\left(\dfrac{\nabla u}{|\nabla u|}\right)-\dfrac{\sqrt{\lambda^2-1}}{\lambda}\text{div}(\tau)=\dfrac{1}{\lambda}H_{\partial E_0},
\end{align*}
thus along $\partial E_0$ we obtain
\begin{equation*}
 E^{0,s}v_4<\frac{1}{\lambda}H_{\partial E_0}+n\|K\|_{C^0}-\bigl(H_++n\|K\|_{C^0}\bigr)\leq0.
\end{equation*}
This implies that $E^0v_4<0$ in the neighbourhood $U:=\{0\leq v_4\leq \delta\},$ for sufficiently small $\delta>0$. Now define the scaled-up function $v_5:=\dfrac{v_4}{1-v_4/\delta},$ for $x\in U$. So $v_5\rightarrow\infty$ for $v_4\rightarrow\delta$, ie on $\partial U\backslash\partial E_0$, and $ E^{0,s}v_5=E^{0,s}v_4+|\nabla v_4|-\dfrac{|\nabla v_4|}{(1-v_4/\delta)^2}\leq E^{0,s}v_4<0.$ 

For $\varepsilon$ sufficiently small (depending on $L$ and $m$) we obtain that $E^{\varepsilon,s}v_5<0$ on the set $V:=\{0\leq v_5\leq L+m\,d_0\}$. From $(\ref{supestimate})$ we have $u\leq L+m\,d_0$ on $\bar{\Omega}_L$, thus $u\leq v_5 $ on $\partial V$.\\
Then by the maximum principle, $u\leq v_5$ on $V$, and therefore\\
\begin{equation}\label{E_0}
\dfrac{\partial u}{\partial\nu}\leq\dfrac{\partial v_5}{\partial\nu}=\dfrac{\partial v_4}{\partial\nu}\leq H_++\varepsilon+n|\lambda|\quad\quad\quad \text{on }\partial E_0
\end{equation}
for sufficiently small $\varepsilon$.\\
\vskip 0.1 true in
\noindent 2) The desired interior gradient estimate (\ref{intgrad}) can be obtained from the interior estimate for $H+P$ in Lemma \ref{interior}. Since we can not apply the result directly to $(*)_{\varepsilon,s}$ (except when $s=1$), we instead rework the proof of Lemma \ref{interior} for the evolution equation
\begin{flushleft}$(*)_s\quad\quad\quad\quad\quad\quad\quad\quad\quad\quad\quad\dfrac{\partial F}{\partial t}=\dfrac{1}{H+sP}\nu,\quad\quad s\in[0,1],$\\
 \end{flushleft}
to obtain the corresponding estimate 
\begin{equation}\label{s}
H(x,t)+sP(x,t)\leq \max\left((H+sP)_R,\frac{\lambda}{\left(\sqrt{\alpha^2+2n\lambda}-\alpha\right)}\right).
\end{equation}
Here $\lambda$ and $\alpha$ are as defined above, and $(H+sP)_R$ is the maximum of $H+sP$ on $P_R$, the parabolic boundary of $\Sigma^{^{\Large{s}}}_t\cap B_R(x)$.\\
\\
Analogous to (\ref{graph}), the downward translating graph $\tilde{\Sigma}^{\varepsilon,s}_t:=\text{graph}\left(\dfrac{u_{\varepsilon,s}}{\varepsilon}-\dfrac{t}{\varepsilon}\right)$
is a smooth solution of $(*)_s$, described by the level-set function 
$U_{\varepsilon,s}(x,z):=u_{\varepsilon,s}(x)-\varepsilon z,$
since $\tilde{\Sigma}^{\varepsilon,s}_t=\{U_{\varepsilon,s}=t\}$. We then relate estimate (\ref{s}) to $|\nabla u_{\varepsilon,s}|$ via $(*)_{\varepsilon,s}$, which asserts that
\begin{equation*}
 (H+sP)_{\tilde{\Sigma}^{\varepsilon,s}_t}=\sqrt{|\nabla u_{\varepsilon,s}|^2+\varepsilon^2}.
\end{equation*}
Now let $\textbf{B}:=B^{n+1}_R(x,z),$ be an (n+1)-dimensional ball centered at the point $(x,z)\subset M\times\mathbb{R}$. Since $\tilde{\Sigma}^{\varepsilon,s}_t$ is a translating solution to $(*_s)$, its parabolic boundary is just a translation of $\partial\Omega_L$ in time. Furthermore, as $|\nabla u_{\varepsilon,s}|$ is independent of $z$, applying (\ref{s}) to $\tilde{\Sigma}^{\varepsilon,s}_t\cap\textbf{B}$ yields
\small\begin{align*}
\sqrt{|\nabla u_{\varepsilon,s}|^2+\varepsilon^2}&\leq\sup_t\max_{\partial\tilde{\Sigma}^{\varepsilon,s}_t\cap\textbf{B}}\sqrt{|\nabla u_{\varepsilon,s}|^2+\varepsilon^2}+C\leq\max_{\partial\Omega_L\cap B^n_R(x)}|\nabla u_{\varepsilon,s}|+\varepsilon+C,
\end{align*}\normalsize
where $C:=\frac{\lambda}{\left(\sqrt{\alpha^2+2n\lambda}-\alpha\right)}$ is defined in Lemma \ref{interior}. For $\varepsilon$ small enough, we obtain from the boundary gradient estimates 
\begin{equation}\label{lipschitz}
 |\nabla u_{\varepsilon,s}(x)|\leq\max_{\partial E_0\cap B_R(x)}H_++2+\tilde{C}(L,\|K\|_{C^0})+C,\\
\end{equation}
which leads to the Lipschitz estimate
\begin{equation*}
 |u_{\varepsilon,s}|_{C^{0,1}(\bar{\Omega}_L)}\leq C(L,n,\|K\|_{C^1}).
\end{equation*}
Then by reworking the proof of the Nash-Moser-De Giorgi estimate (\cite{GT01}, Thm 13.2), we obtain
\begin{equation*}
 |u_{\varepsilon,s}|_{C^{1,\alpha}(\bar{\Omega}_L)}\leq C(\varepsilon,L,n,\|K\|_{C^1}),
\end{equation*}
for some $\alpha=\alpha(\Omega_L)$. This implies a bound on the H\"older modulus of continuity for the coefficients of $E^{\varepsilon,s}u$, so Schauder theory improves this estimate to $C^{2,\alpha}$
\begin{equation}\label{C2alpha}
 |u_{\varepsilon,s}|_{C^{2,\alpha}(\bar{\Omega}_L)}\leq C(\varepsilon,L,n,\|K\|_{C^1}).
\end{equation}
\end{proof}
\begin{Lemma}[Existence for the regularised problem] \label{approxlemma}
A smooth solution of $(*)_{\varepsilon}$ exists.
\end{Lemma}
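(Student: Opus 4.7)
The plan is the standard method of continuity applied to the family $(*)_{\varepsilon,s}$, $s \in [0,1]$, introduced immediately above Lemma~\ref{apriori}. Define
$$I := \bigl\{\, s \in [0,1] \,:\, (*)_{\varepsilon,s} \text{ admits a classical solution } u_{\varepsilon,s} \in C^{2,\alpha}(\bar{\Omega}_L)\,\bigr\}.$$
To prove the lemma it suffices to show $1 \in I$, for which I would verify that $I$ is non-empty, open, and closed in $[0,1]$.

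For the base case $s = 0$, the $K$-term drops out of the equation and the boundary data vanishes on both components of $\partial \Omega_L$, leaving the pure capillary-regularised inverse mean curvature equation. Existence of a smooth solution in this situation is known from Huisken--Ilmanen \cite{HI01} (and can also be obtained by a secondary continuation in the right-hand side coefficient, or by a direct Leray--Schauder fixed-point argument fed by the bounds of Lemma~\ref{apriori}).

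For openness, fix $s_0 \in I$ with solution $u_0$, and consider the Fr\'echet derivative $L_0 := dE^{\varepsilon,s_0}\big|_{u_0}$. Its principal symbol
$$a^{ij} := \frac{1}{\sqrt{|\nabla u_0|^2 + \varepsilon^2}}\left(g^{ij} - \frac{\nabla^i u_0 \, \nabla^j u_0}{|\nabla u_0|^2 + \varepsilon^2}\right)$$
is uniformly positive definite because of the $\varepsilon^2$-regularisation, so $L_0$ is strictly elliptic; moreover, because $E^{\varepsilon,s}$ contains no undifferentiated occurrence of $u$, there is no zero-order contribution to $L_0$. The weak maximum principle then gives injectivity on functions vanishing on $\partial \Omega_L$, so by the Fredholm alternative and Schauder theory $L_0$ is an isomorphism of the relevant H\"older spaces. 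The implicit function theorem in Banach spaces (with the linear-in-$s$ Dirichlet data absorbed by subtracting a fixed smooth extension) produces solutions for all $s$ in a neighbourhood of $s_0$.

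For closedness, take $s_n \to s_\infty$ with $s_n \in I$ and solutions $u_n := u_{\varepsilon,s_n}$. The estimate (\ref{C2alpha}) of Lemma~\ref{apriori} provides a uniform $C^{2,\alpha}$ bound independent of $s$, so by Arzel\`a--Ascoli a subsequence converges in $C^{2,\alpha'}$, $\alpha' < \alpha$, to some $u_\infty$ with the correct boundary data. Passing to the limit in $(*)_{\varepsilon,s_n}$ gives $E^{\varepsilon,s_\infty} u_\infty = 0$, so $s_\infty \in I$. The main obstacle is pinning down the base case at $s=0$, since the trivial function is not a solution there; the openness and closedness steps are by now mechanical consequences of the a priori machinery already established.
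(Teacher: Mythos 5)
Your proposal is correct and follows essentially the same continuity-method route as the paper. The one place worth flagging is the base case $s = 0$: the paper chooses the secondary-continuation option you list as an alternative, namely rescaling $\hat u := u_\varepsilon/\varepsilon$ so that $(*)_{\varepsilon,0}$ becomes $F(\hat u) = \varepsilon$ with $F(0) = 0$ and $dF\vert_0 = \Delta_g$, and then invoking the implicit function theorem in $\varepsilon$. This both produces the base solution and fixes the admissible range $\varepsilon \in (0,\varepsilon_0)$ for which the continuation in $s$ is then run. For openness, the paper packages the $s$-dependent Dirichlet data into an augmented map $G(w,s) = \bigl(E^{\varepsilon,s}(w),\, \pi(w) - s(L-2)\chi_{\partial F_L}\bigr)$ rather than subtracting a fixed extension; both devices are standard and equivalent. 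Finally, the paper does not write out the closedness step, leaving it as an implicit consequence of the $s$-uniform estimate (\ref{C2alpha}); your Arzel\`a--Ascoli extraction of a $C^{2,\alpha'}$-convergent subsequence and passage to the limit supplies that missing step correctly.
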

\begin{proof} We first prove there is a solution of $(*_{\varepsilon,s})$ for $s=0$ and small $\varepsilon$. Let $\hat{u}=\frac{u_{\varepsilon}}{\varepsilon}$ and rewrite $(*)_{\varepsilon,0}$ as
\begin{align*}(*)_{\hat{\varepsilon}}\begin{cases}F(\hat{u}):=\dfrac{1}{\sqrt{|\nabla \hat{u}|^2+1}}\,\text{div}\left(\dfrac{\nabla \hat{u}}{\sqrt{|\nabla \hat{u}|^2+1}}\right)=\varepsilon\,\,\quad\quad&\text{in }\Omega_L,\\
                  \hat{u}=0\quad\quad\quad\quad\quad\quad\quad\quad\quad\quad\quad\quad\quad\quad\quad\quad\quad\quad\quad\quad\,&\text{on }\partial\Omega_L.
                 \end{cases}
\end{align*}
The map 
\begin{equation*}
 F:C_0^{2,\alpha}(\bar{\Omega}_L)\rightarrow C^{\alpha}(\bar{\Omega}_L),
\end{equation*}
is $C^{1}$, and possesses the solution $F(0)=0$ for $\varepsilon=0$. The linearisation of $F$ at $\hat{u}=0$ is
\begin{equation*}
 \mathcal{D}F\vert_0=\Delta_{g}:C^{2,\alpha}_0(\bar{\Omega}_L)\rightarrow C^{\alpha}(\bar{\Omega}_L).
\end{equation*}
The Laplacian on $M$ is an isomorphism, so by the Implicit Function Theorem there exists $\varepsilon_0>0$ such that $(*)_{\hat{\varepsilon}}$ has a unique solution for $0\leq\varepsilon<\varepsilon_0$.
\vskip 0.2 true in
\noindent We now fix $\varepsilon\in(0,\varepsilon_0)$ and vary $s$. Let $I$ be the set of $s$ such that $(*)_{\varepsilon,s}$ has a solution $u_{\varepsilon,s}\in C^{2,\alpha}(\bar{\Omega}_L).$ We have shown that $I$ contains $0$. We first show that $I$ is open. Let $\pi$ be the boundary value map $u\mapsto u\vert_{\partial\Omega}$. Consider the map 
\begin{equation*}
 G:C^{2,\alpha}(\bar{\Omega}_L)\times\mathbb{R}\rightarrow C^{\alpha}(\bar{\Omega}_L)\times C^{2,\alpha}(\partial\Omega_L),
\end{equation*}
defined by $G(w,s):=G^{s}(w)=\biggl(E^{\varepsilon,s}(w),\pi(w)-s(L-2)\chi_{\partial F_L}\biggr)$, so that $(*)_{\varepsilon,s}$ is equivalent to $G^s(w)=(0,0)$. $G^s(w)$ is $C^{1}$, and possesses the solution $G^0(u_0)=(0,0)$, where  
 $u_0$ is the $C^{2,\alpha}_0(\bar{\Omega}_L)$ solution from above. The linearisation of $G^0$ at $u_0$ is the operator $\mathcal{D}G^0_{u_0}$ given by
\begin{equation}
 \mathcal{D}G^0_{u_0}=\vs A^{ij}\nabla_i\nabla_j+B^i\nabla_i\\ \pi\ve :C^{2,\alpha}\rightarrow C^{\alpha}(\bar{\Omega}_L)\times C^{2,\alpha}(\partial\Omega_L),
\end{equation}
where 
\begin{align*}
 A^{ij}=\dfrac{1}{\sqrt{1+|\nabla u_0|^2}}\left(g^{ij}-\dfrac{\nabla^iu_0\nabla^ju_0}{1+|\nabla u_0|^2}\right),\quad B^i=\nabla_jA^{ij}-\varepsilon\dfrac{\nabla^iu_0}{\sqrt{|\nabla u_0|^2+1}}.
\end{align*}
Since $\mathcal{D}E^0_{u_0}(w)$ is a linear elliptic equation with H\"older continuous coefficients, we can apply Schauder theory (eg \cite{GT01}, Thm 6.14) to deduce that $\mathcal{D}G^0_{u_0}$ is a bijective map with continuous inverse. It follows from the Implicit Function Theorem that $G$ maps a neighbourhood of $(u_0,0)$ onto a neighbourhood of $(0,0)$. Thus $I$ is relatively open, which completes the proof of existence of $u^{\varepsilon}\in C^{2,\alpha}(\bar{\Omega})$ solving $(*)_{\varepsilon}$. Smoothness then follows from standard Schauder estimates.
\end{proof}
In view of the local uniform Lipschitz estimates for $u_{\varepsilon}$, by the Arzela Ascoli theorem there exist sequences $\varepsilon_i\to0$, $L_i\to\infty$, a subsequence $u_i$ and a locally Lipschitz function $u:M\backslash E_0\to\mathbb{R}$ such that 
\begin{equation}\label{subseq}
u_{i}\to u 
\end{equation}
locally uniformly on $M\backslash E_0$, and by (\ref{lipschitz}), $u$ satisfies
\begin{equation}
|\nabla u(x)|\leq \sup_{\partial E_0\cap B_R(x)}H_++C(L,n,\|K\|_{C^1},R).
\end{equation} 
In the next section we will study the limit of the translating graphs $\tilde{\Sigma}^{\varepsilon}_t=\{U_{\varepsilon}=t\}$, where the time-of-arrival function $U_{\varepsilon}$ was defined by (\ref{levelsetepsilon}). By setting 
\begin{equation}\label{Udefn1}
U(x,z):=u(x),
\end{equation}
we obtain that $U_{i}\to U$ locally uniformly on $(M\backslash E_0)\times\mathbb{R}$, therefore $U$ is the time of arrival function of the limit of the smooth flow $t\mapsto\tilde{\Sigma}^{i}_t$.\\
\section{The limit of the translating $\varepsilon$-graphs $\tilde{\Sigma}^{\varepsilon}_t$.}\label{sec:jump}
Our choice of variational formulation to define weak solutions to $(**)$, detailed in the next section, is motivated by:
\begin{enumerate}
\item The variational properties of smooth solutions of $(**)$,
\item The limiting behaviour of the family $\tilde{\Sigma}^{\varepsilon}_t$ of translating solutions of $(**)$ in $M\times\mathbb{R}$. 
\end{enumerate}
In particular, we show that the sets $E_t=\{u<t\}$, associated to a \textit{smooth} solution $u$ of $(**)$, minimise the following parametric energy functional 
\begin{equation}\label{functional1}
\mathcal{J}^{A}_{u,\nu}(F):=|\partial^* F\cap A|-\int_{F\cap A}|\nabla u|-\left(g^{ij}-\nu^i\nu^j\right)K_{ij},
\end{equation}
for each $t>0$. That is
\begin{equation}\label{varprincip2}
\mathcal{J}^A_{u,\nu}(E_t)\leq\mathcal{J}^A_{u,\nu}(F),
\end{equation}
for each set $F$ of locally finite perimeter that differs from the set $E_t$ on a compact subset $A$ of the domain. Here $P=\text{tr}_{\Sigma_t}K=(g^{ij}-\nu^i\nu^j)K_{ij}$, where $\nu$ represents the unit normal $\nabla u/|\nabla u|$ to the surfaces $\Sigma_t=\partial E_t$. The functional (\ref{functional1}), together with the minimisation principle (\ref{varprincip2}), generalises the variational formulation employed by Huisken and Ilmanen in \cite{HI01}, and accordingly allows the evolving surfaces to jump instantaneously over a positive volume at plateaus of the time-of-arrival function, $u$. However, in the weak setting, $\nabla u/|\nabla u|$ is undefined on plateaus of the locally Lipschitz function $u$, so in order to incorporate the extra $P$ term for this new flow, we must define an appropriate notion of normal vector in these jump regions. In this section we show that such a vector field can be obtained by taking an appropriate limit of the translating graphs $\tilde{\Sigma}^{\varepsilon}_t$. Since the null mean curvature of these surfaces is uniformly bounded, results of measure theory allow us to control them in $C^{1,\alpha}$, which leads to a foliation of the interior of the jump region $\{U=t_0\}=\{u=t_0\times\mathbb{R}\}$, at jump times $t_0$, by hypersurfaces satisfying the following result.
\begin{Proposition}\label{jumpprop}
Let $U(x,z)=u(x)$, where $u\in C^{0,1}_{\text{loc}}(M\backslash E_0)$ is the limit of the solution $u_{\varepsilon}$ of $(**)_{\varepsilon}$, as in (\ref{Udefn1}). Then the interior, $\tilde{\mathcal{K}}_{t_0}$, of the jump region $\{u=t_0\}\times\mathbb{R}=\{U=t_0\}$, at jump times $t_0$ is foliated by hypersurfaces with local uniform $C^{1,\alpha}$ estimates, where each such hypersurface is either a vertical cylinder or a graph over an open subset of $\{u=t_0\}$. Furthermore, each hypersurface bounds a Caccioppoli set that minimises $J_{U,\tilde{\nu}}$ in $\tilde{\mathcal{K}}_{t_0}$, where $\tilde{\nu}$ denotes the  $C^{0,\alpha}_{\text{loc}}$ normal vector field to the hypersurface foliation.
\end{Proposition}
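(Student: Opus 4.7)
The plan is to reconstruct the foliation by taking limits of the smooth translating graphs $\tilde{\Sigma}^{\varepsilon}_t$ and exploiting the interior null mean curvature estimate of Lemma \ref{interior}. Fix any point $(x_0, z_0) \in \tilde{\mathcal{K}}_{t_0}$. Since $U_{\varepsilon_i} \to U$ locally uniformly and $U(x_0, z_0) = t_0$, setting $t_i := U_{\varepsilon_i}(x_0, z_0)$ gives $t_i \to t_0$ with the translating graph $\tilde{\Sigma}^{\varepsilon_i}_{t_i} = \{U_{\varepsilon_i} = t_i\}$ passing through $(x_0, z_0)$. Each $\tilde{\Sigma}^{\varepsilon_i}_{t_i}$ is a smooth classical solution of $(*)$ in $(M \times \mathbb{R}, \bar{g})$, so Lemma \ref{interior} gives a local uniform bound on its null mean curvature; combined with $|P| \leq n\|K\|_{C^0}$, this yields a local uniform bound on the mean curvature of $\tilde{\Sigma}^{\varepsilon_i}_{t_i}$ in the product manifold. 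Together with the local perimeter bound coming from the Lipschitz estimate on $u_{\varepsilon_i}$, Allard-type regularity for integral varifolds of bounded mean curvature allows me to extract a subsequence converging in $C^{1,\alpha}_{\mathrm{loc}}$ to an embedded $C^{1,\alpha}$ hypersurface $\tilde{\Sigma}$ passing through $(x_0, z_0)$.

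Next I would analyse the geometry of $\tilde{\Sigma}$. The downward unit normal to each approximating graph has $z$-component of magnitude $\varepsilon_i/\sqrt{|\nabla u_{\varepsilon_i}|^2 + \varepsilon_i^2}$, which forces a dichotomy in the limit. If this component stays bounded away from zero near $(x_0, z_0)$, then $\tilde{\Sigma}$ is a $C^{1,\alpha}$ graph of a function over an open subset of $\{u = t_0\}$. Otherwise $|\nabla u_{\varepsilon_i}|/\varepsilon_i \to \infty$, the $z$-component of the normal vanishes identically on $\tilde{\Sigma}$, and $\tilde{\Sigma}$ is a vertical cylinder $\Gamma \times \mathbb{R}$ over an open subset $\Gamma \subset \{u = t_0\}$. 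Varying $(x_0, z_0)$ across $\tilde{\mathcal{K}}_{t_0}$ and repeating the extraction produces a family of limit leaves. Since the $\tilde{\Sigma}^{\varepsilon_i}_{t_i}$ are disjoint level sets of the smooth function $U_{\varepsilon_i}$ for distinct $t_i$, the strong maximum principle applied in the $C^{1,\alpha}$ limit forbids strict crossings, so the leaves assemble into a genuine foliation of $\tilde{\mathcal{K}}_{t_0}$ with a well-defined $C^{0,\alpha}_{\mathrm{loc}}$ unit normal field $\tilde{\nu}$.

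For the minimisation assertion, each smooth $\tilde{\Sigma}^{\varepsilon_i}_{t_i}$ bounds the sublevel Caccioppoli set $\{U_{\varepsilon_i} < t_i\}$, which minimises $\mathcal{J}^A_{U_{\varepsilon_i}, \nu_{\varepsilon_i}}$ against compactly supported perturbations by the variational characterisation of smooth solutions of $(**)$ stated as property (1) above. Passing to the limit, the $C^{1,\alpha}$ convergence of the boundaries gives $\nu_{\varepsilon_i} \to \tilde{\nu}$ locally uniformly (handling the $P$-term $(g^{ij} - \nu^i \nu^j)K_{ij}$), $U_{\varepsilon_i} \to U$ locally uniformly handles the $|\nabla u|$ integrand, and lower semicontinuity of perimeter handles the boundary term; together these pass the minimisation to $\mathcal{J}^A_{U,\tilde{\nu}}$ on any compact $A \subset \tilde{\mathcal{K}}_{t_0}$. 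The main obstacle I foresee lies in the dichotomy step: when $|\nabla u_{\varepsilon_i}|/\varepsilon_i \to \infty$, the graphs degenerate from vertical graphs into vertical cylinders, so graph coordinates fail and the $C^{1,\alpha}$ control must be implemented intrinsically via Allard regularity in $(M \times \mathbb{R}, \bar{g})$; care is then required to identify the limiting normal field $\tilde{\nu}$ consistently across both arms of the dichotomy, especially so that the $P$-term in $\mathcal{J}^A_{U,\tilde{\nu}}$ is computed correctly from the lifted horizontal normal in the cylinder case rather than from a formal expression $\nabla U/|\nabla U|$ that is not defined there.
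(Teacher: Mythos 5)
Your proposal follows the paper's strategy in its broad strokes: fix a target point $X_0\in\tilde{\mathcal{K}}_{t_0}$, choose times $t_i$ so that the translating graph $\tilde{\Sigma}^{i}_{t_i}$ passes through $X_0$, extract a $C^{1,\alpha}_{\mathrm{loc}}$ limit leaf, observe the graph-vs-cylinder dichotomy governed by the blow-up of $|\nabla u_{\varepsilon_i}|/\varepsilon_i$, and then pass the minimisation to the limit. That is the same skeleton as Lemmata \ref{regcorollary}, \ref{foliation1}, and \ref{minimising}. Where you diverge from the paper is the source of the uniform $C^{1,\alpha}$ control: you invoke Allard regularity for varifolds with bounded mean curvature plus an area bound, whereas the paper first establishes that each sublevel set $\{U_\varepsilon<t\}$ minimises $\mathcal{J}_{U_\varepsilon,\nu_\varepsilon}$ (via the calibration argument in the Smooth Flow Lemma), deduces an almost-minimising perimeter inequality, and then applies the Tamanini-type obstacle-problem Regularity Theorem \ref{regthm}. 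The latter route is cleaner in this setting, since the minimising structure is already available; Allard's theorem as you state it requires not merely an area upper bound but a density-ratio or excess smallness hypothesis at small scales, and you have not indicated how to obtain that from the Lipschitz estimate alone (it would require a monotonicity or density argument that the paper avoids entirely).

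There are two genuine gaps. First, you say ``Varying $(x_0,z_0)$ across $\tilde{\mathcal{K}}_{t_0}$ and repeating the extraction produces a family of limit leaves,'' but each extraction chooses a fresh subsequence, and nothing in your argument guarantees these subsequences are compatible. The strong maximum principle rules out transversal crossing and tangential one-sided touching, but it does not by itself produce a single convergent sequence along which the leaves, their normals, and the functions $U_i$ all converge — which is precisely what you need in the final step. The paper resolves this by choosing a countable dense set of target points $\{p_i\}$, extracting a diagonal subsequence, and then showing that this one subsequence in fact converges at every point of $\Omega_G\cup\Omega_C$. Second, the minimisation step is too compressed: ``lower semicontinuity of perimeter handles the boundary term'' gives $\mathcal{J}_{U,\tilde\nu}(\tilde{E})\le\liminf\mathcal{J}_{U_i,\nu_i}(\tilde{E}_i)$ but does not bound the right side by $\mathcal{J}_{U,\tilde\nu}(\tilde{F})$ for an arbitrary competitor $\tilde{F}$. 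You must construct from $\tilde{F}$ admissible competitors $\tilde{F}_i$ for $\tilde{E}_i$ and control the trace error on the comparison region, exactly as in the paper's Compactness Property \ref{compactness}; without this the passage to the limit of the minimising property is not justified.
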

The normal vector field $\tilde{\nu}$ to this foliation extends $\dfrac{\bar{\nabla}U}{|\bar{\nabla}U|}=\dfrac{(\nabla u,0)}{|\nabla u|}$  across the jump region 
$\tilde{\mathcal{K}}_{t_0}$ in $M\times\mathbb{R}$, and this extended vector field helps motivate the definition of weak solutions to $(**)$ in the next section. In this context, hypersurfaces and sets in $M\times\mathbb{R}$ will be denoted by the $\sim$ superscript for the remainder of this work, unless otherwise stated, and $\bar{\nabla}$ denotes the connection on $(M\times\mathbb{R},\bar{g})$.

To prove Proposition \ref{jumpprop} we utilise the following compactness result for sequences of minimisers of (\ref{functional1}). 
\begin{COMP2}\label{compactness}
Let $\tilde{\Omega}\subset M\times\mathbb{R}$ , and let $\tilde{E}_i\subset \tilde{\Omega}$ be a sequence of sets with $C^{1,\alpha}_{\text{loc}}$ boundary such that  $\partial \tilde{E}_i\to \partial \tilde{E},$ locally in $C^{1,\alpha}$, with outward unit normal $\nu_i\in C^{0,\alpha}_{\text{loc}}(T\tilde{\Omega})$ to $\partial \tilde{E}_i$ satisfying $\nu_i\to\nu$ locally uniformly.
Let $U_i\in C^{0,1}_{\text{loc}}(\tilde{\Omega})$ satisfy $U_i\to U$ locally uniformly, and assume that for each $\tilde{A}\subset\subset\tilde{\Omega}$, $\sup_{\tilde{A}}|\bar{\nabla} U_i|\leq C(\tilde{A})$ for large $i$. \\
If the sequence $\tilde{E}_i$ minimises $\mathcal{J}_{U_i,\nu_i}$ on $\tilde{\Omega}$, then $\tilde{E}$ minimises $\mathcal{J}_{U,\nu}$ in $\tilde{\Omega}$. 
\end{COMP2}
\begin{proof} We use the inequality 
\begin{equation}\label{compact}
\mathcal{J}_{U_i,\nu_i}(\tilde{E}_1)+\mathcal{J}_{U_i,\nu_i}(\tilde{E}_2)\geq \mathcal{J}_{U_i,\nu_i}(\tilde{E}_1\cup \tilde{E}_2)+\mathcal{J}_{U_i,\nu_i}(\tilde{E}_1\cap \tilde{E}_2),
\end{equation}
for an appropriate choice of Caccioppoli sets $\tilde{E}_1$ and $\tilde{E}_2$ such that $\tilde{E}_1\Delta \tilde{E}_2$ is precompact.\\

We first prove that $\tilde{E}$ minimises $\mathcal{J}_{U,\nu}$ on the outside in $\tilde{\Omega}$. To this end, consider $\tilde{F}\supset \tilde{E}$ with $\tilde{F}\backslash \tilde{E}\subset\subset\tilde{\Omega}$ and a suitable compact set $G\subset\tilde{\Omega}$ containing $\tilde{F}\backslash \tilde{E}$. Since the boundary of $G$ is not necessarily Lipschitz continuous, we consider a compact set $\bar{G}\subset\tilde{\Omega}$ with smooth boundary and $G\subset \text{int}(\bar{G})$ such that 
\begin{equation*}
 |\partial^*(\tilde{F}\cup \tilde{E}_i)\cap\partial\bar{G}|=|\partial^*(\tilde{F}\cap \tilde{E}_i)\cap\partial^*\bar{G}|=|\partial \tilde{E}_i\cap\partial\bar{G}|=0,
\end{equation*}
for all $i$, with traces satisfying $\int_{\partial\bar{G}}|\varphi^-_{\tilde{F}\cup \tilde{E}_i}-\varphi^+_{\tilde{E}_i}|d\mathcal{H}^{n+1}\to0$. This is possible because $\tilde{F}\cup \tilde{E}_i\to \tilde{E}$ and $\tilde{E}_i\to \tilde{E}$ in $L^1_{\text{loc}}(\tilde{\Omega}\backslash G)$. Then setting $\tilde{F}_i:=\tilde{E}_i\cup(\tilde{F}\cap \bar{G})$ we see that 
\begin{equation*}
 |\partial^*\tilde{F}_i\cap\tilde{\Omega}|=|\partial^*\tilde{E}_i\cap(\tilde{\Omega}\backslash\bar{G})|+|\partial^*(\tilde{F}\cup \tilde{E}_i)\cap\bar{G}|+\int_{\partial\bar{G}}|\varphi^-_{\tilde{F}\cup \tilde{E}_i}-\varphi^+_{\tilde{E}_i}|.
\end{equation*}
Now, since $\tilde{F}_i$ is an appropriate comparison function for $\tilde{E}_i$, we have \\$\mathcal{J}^{\bar{G}}_{U_i,\nu_i}(\tilde{E}_i)\leq \mathcal{J}^{\bar{G}}_{U_i,\nu_i}(\tilde{F}_i)$, implying
\begin{equation*}
 \mathcal{J}^{\bar{G}}_{U_i,\nu_i}(\tilde{E}_i)\leq \mathcal{J}^{\bar{G}}_{U_i,\nu_i}(\tilde{F}\cup \tilde{E}_i)+\int_{\partial\bar{G}}|\varphi^-_{\tilde{F}\cup \tilde{E}_i}-\varphi^+_{\tilde{E}_i}|.
\end{equation*}
Now inserting $\tilde{E}_1=\tilde{E}_i$ and $\tilde{E}_2=\tilde{F}$ into (\ref{compact}) we obtain
\begin{equation}\label{A1}
 \mathcal{J}^{\bar{G}}_{U_i,\nu_i}(\tilde{F})\geq \mathcal{J}^{\bar{G}}_{U_i,\nu_i}(\tilde{E}_i\cap \tilde{F})-\int_{\partial\bar{G}}|\varphi^-_{\tilde{F}\cup \tilde{E}_i}-\varphi^+_{\tilde{E}_i}|.
\end{equation}
Next we pass to limits. Since the trace term converges to zero, using lower semicontinuity we obtain
\begin{equation*}
\mathcal{J}^{\bar{G}}_{U,\nu}(\tilde{F})\geq \mathcal{J}^{\bar{G}}_{U,\nu}(\tilde{E}).
\end{equation*}

The fact that $\tilde{E}$ minimises $\mathcal{J}_{U,\nu}$ on the inside in $\tilde{\mathcal{G}}_{t_0}$ amongst competing sets $\tilde{F}\subset \tilde{E}$ satisfying $\tilde{E}\backslash \tilde{F}\subset\subset\tilde{\Omega}$ can similarly be proven by again constructing $\bar{G}$ and considering the comparison function $\tilde{F}_i:=\tilde{E}_i\cap \tilde{F}$ for $i>>1$ large enough.
\end{proof}

To prove Proposition \ref{jumpprop} we will also draw upon regularity theory for obstacle problems of the type (\ref{ofnal}) below. In particular, if the set $E_t:=\{u<t\}$ minimises $\mathcal{J}_{u,\nu}$, then it is almost minimal in the sense that 
\begin{equation}
|\partial^*E_t\cap B_R|\leq|\partial^*F\cap B_R|+C(n,\|Du\|_{\infty},\|K\|_{C^0})R^{n+1},
\end{equation}
for $E_t\Delta F\subset\subset B_R $. This means we can apply partial regularity results of geometric measure theory to obtain higher regularity for the level-sets $\Sigma_t=\partial E_t$. Specifically, we consider the following $C^{1,\alpha}$ result (see for example \cite{T84}), as quoted in \cite{HI01}. 
\begin{REG}\label{regthm}
Let $f$ be a bounded measurable function on a domain $\Omega$ with smooth metric $g$ and dimension $n+1<8$. Suppose $E$ contains an open set $A$ and minimises the functional 
\begin{equation}\label{ofnal}
 |\partial^*F|+\int_Ff
\end{equation}
 with respect to competitors $F$ such that $F\supseteq A,$ and $F\Delta E\subset\subset\Omega$.  
If $\partial A$ is $C^{1,\alpha}$, $0<\alpha\leq1/2,$ then $\partial E$ is a $C^{1,\alpha}$ submanifold of $\Omega$ with $C^{1,\alpha}$ estimates depending only on the distance to $\partial\Omega$, $\text{ess sup}|f|,\, C^{1,\alpha}$ bounds for $\partial A$, and $C^1$ bounds (including positive lower bounds) for the metric $g$. When $n+1\geq 8$, this remains true away from a closed singular set $Z$ of dimension at most $n-7$ that is disjoint from $\bar{A}$.
\end{REG}
\vskip 0.2 true in
\begin{jumpproof}
We break up the proof into the following Lemmata. 
\begin{Lemma}\label{regcorollary}
 The level-sets $\tilde{\Sigma}^{\varepsilon}_t=\{U_{\varepsilon}=t\}$ are locally uniformly bounded in $C^{1,\alpha}$.
\end{Lemma}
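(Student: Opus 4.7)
To prove Lemma \ref{regcorollary}, the plan is to exhibit each smooth translating level set $\tilde{\Sigma}^{\varepsilon}_t = \partial \tilde{E}^{\varepsilon}_t$, where $\tilde{E}^{\varepsilon}_t := \{U_{\varepsilon} < t\}$, as a minimiser of a perimeter-plus-bulk functional whose bulk term is bounded uniformly in $\varepsilon$, and then to invoke the Regularity Theorem \ref{regthm} to extract the desired local uniform $C^{1,\alpha}$ estimate.

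First I would record that $U_{\varepsilon}(x,z) = u_{\varepsilon}(x) - \varepsilon z$ is a smooth solution of $(**)$ on the product $(M\times\mathbb{R},\bar{g})$ with nowhere-vanishing gradient, since $|\bar{\nabla} U_{\varepsilon}|^2 = |\nabla u_{\varepsilon}|^2 + \varepsilon^2 > 0$. Setting $\tilde{\nu}^{\varepsilon} := \bar{\nabla} U_{\varepsilon}/|\bar{\nabla} U_{\varepsilon}|$ to be the smooth unit normal to the level sets, the PDE $(**)$ is equivalent to the divergence identity
$$
\text{div}_{\bar{g}}(\tilde{\nu}^{\varepsilon}) = |\bar{\nabla} U_{\varepsilon}| - (\bar{g}^{ij} - \tilde{\nu}^{\varepsilon,i}\tilde{\nu}^{\varepsilon,j})K_{ij}.
$$
A standard calibration argument, to be formalised in Section \ref{2.3} for smooth solutions, then shows that for any compact $\tilde{A}\subset M\times\mathbb{R}$ the sublevel set $\tilde{E}^{\varepsilon}_t$ minimises $\mathcal{J}^{\tilde{A}}_{U_{\varepsilon},\tilde{\nu}^{\varepsilon}}$ amongst Caccioppoli sets $\tilde{F}$ with $\tilde{F}\triangle\tilde{E}^{\varepsilon}_t\subset\subset\tilde{A}$. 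The key step is the divergence theorem applied to $\tilde{\nu}^{\varepsilon}$ on $\tilde{F}\triangle\tilde{E}^{\varepsilon}_t$, combined with the pointwise bound $\langle\tilde{\nu}^{\varepsilon},\nu_{\partial^*\tilde{F}}\rangle \le 1$ whose equality case is realised on $\partial^*\tilde{E}^{\varepsilon}_t$.

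Because $\tilde{\nu}^{\varepsilon}$ is held fixed while $\tilde{F}$ varies, the functional is exactly of the obstacle form treated by the Regularity Theorem,
$$
\mathcal{J}^{\tilde{A}}_{U_{\varepsilon},\tilde{\nu}^{\varepsilon}}(\tilde{F}) = |\partial^*\tilde{F}\cap\tilde{A}| + \int_{\tilde{F}\cap\tilde{A}} f_{\varepsilon},
$$
with forcing $f_{\varepsilon} := -|\bar{\nabla} U_{\varepsilon}| + (\bar{g}^{ij} - \tilde{\nu}^{\varepsilon,i}\tilde{\nu}^{\varepsilon,j})K_{ij}$. The interior gradient bound (\ref{lipschitz}) of Lemma \ref{apriori} yields local uniform control of $|\bar{\nabla} U_{\varepsilon}|=\sqrt{|\nabla u_{\varepsilon}|^2+\varepsilon^2}$, and $K$ is uniformly bounded in $C^0$, so $\|f_{\varepsilon}\|_{L^{\infty}(\tilde{A})}$ is bounded by a constant depending only on $\tilde{A}$ and on the fixed background data. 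Applying the Regularity Theorem \ref{regthm} with obstacle $A=\emptyset$ on the product manifold, whose metric $\bar{g}$ has fixed $C^{1}$ bounds, then produces local uniform $C^{1,\alpha}$ estimates for $\tilde{\Sigma}^{\varepsilon}_t = \partial\tilde{E}^{\varepsilon}_t$ that are independent of $\varepsilon$. In dimensions $n+2\ge 8$ the theorem only controls the boundary away from a singular set of codimension at least $7$, but for each fixed $\varepsilon$ the graph $\tilde{\Sigma}^{\varepsilon}_t$ is smooth, so the singular set is empty and the estimate extends to the whole hypersurface.

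The main obstacle is the calibration step: one must verify that the divergence identity for $\tilde{\nu}^{\varepsilon}$ upgrades to a genuine two-sided minimisation of $\mathcal{J}^{\tilde{A}}_{U_{\varepsilon},\tilde{\nu}^{\varepsilon}}$ by $\tilde{E}^{\varepsilon}_t$, rather than merely a critical-point or one-sided condition, and that the resulting forcing $f_{\varepsilon}$ is a function of ambient data and the uniformly controlled gradient alone. Once this variational characterisation is in hand, the rest of the argument is a direct appeal to partial regularity for almost-minimising Caccioppoli sets with $L^{\infty}$ bulk forcing.
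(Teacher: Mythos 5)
Your proposal follows the same route as the paper: first the calibration/divergence-theorem argument (citing the Smooth Flow Lemma mechanism) shows that each $\tilde{E}^{\varepsilon}_t$ minimises $\mathcal{J}_{U_{\varepsilon},\nu_{\varepsilon}}$, then the local gradient bound (\ref{lipschitz}) controls the $L^\infty$ norm of the bulk term $f_{\varepsilon}=-|\bar{\nabla}U_{\varepsilon}|+(\bar g^{ij}-\nu^i_\varepsilon\nu^j_\varepsilon)K_{ij}$, and the Regularity Theorem \ref{regthm} (with empty obstacle) gives $\varepsilon$- and $t$-uniform $C^{1,\alpha}$ estimates. Your extra remark on the high-dimensional singular set is a harmless addendum not present in the paper; otherwise the argument is identical.
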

\begin{proof}
Since $\tilde{\Sigma}^{\varepsilon}_t=\{U_{\varepsilon}=t\}$ is a smooth solution of $(*)$ on $(M\backslash E_0)\times\mathbb{R}$, with smooth normal vector field $\nu_{\varepsilon}=\frac{\bar{\nabla}U_{\varepsilon}}{|\bar{\nabla}U_{\varepsilon}|}$, the functional $J_{U_{\varepsilon},\nu_{\varepsilon}}$ is well defined for sets $\tilde{F}\subset M\times\mathbb{R}$ of locally finite perimeter.  Using $\nu_{\varepsilon}$ as a calibration and applying the divergence theorem exactly as in the proof of Smooth Flow Lemma \ref{SMOO} shows that $\tilde{E}^{\varepsilon}_t:=\{U_{\varepsilon}<t\}$ minimises $\mathcal{J}_{U_{\varepsilon},\nu_{\varepsilon}}$ in $\Omega:=\tilde{E}^{\varepsilon}_b\backslash \tilde{E}^{\varepsilon}_a$ for $a\leq t\leq b$.

Now consider $\bar{x}=(x,x')\in(M\backslash\bar{E_0})\times\mathbb{R}$, and $d=\text{dist}(\bar{x},\partial E_0\times\mathbb{R})=\text{dist}(x,\partial E_0)$. Take $L'$ large enough that $B^M_{2d}(x)\subset F_{L'}$. Then for $\varepsilon\leq\varepsilon'=\varepsilon(L')$, (\ref{lipschitz}) provides a uniform bound for $|\nabla u_{\varepsilon}|$ (and thus also $|\bar{\nabla} U_{\varepsilon}|+P_{\tilde{\Sigma}^{\varepsilon}_t}$) on $B^{M\times\mathbb{R}}_{d}(\bar{x})$. It then follows from Theorem \ref{regthm} that the surfaces $\tilde{\Sigma}^{\varepsilon}_t\cap B^{n+1}_d(x)$ are uniformly bounded in $C^{1,\alpha}$ in $\varepsilon$ and $t$. 
 \end{proof}
\begin{Lemma}\label{foliation1}
Let $\tilde{\mathcal{K}}_{t_0}$ denote the interior of the jump region $\{U=t_0\}$, at a jump time $t_0$. Then each point $X_0=(x_0,z_0)\in \tilde{\mathcal{K}}_{t_0}$ lies in a surface $\tilde{\Sigma}_{X_0}\subset \tilde{\mathcal{K}}_{t_0}$ that is locally uniformly bounded in $C^{1,\alpha}$, and is either a vertical cylinder or a graph over an open subset of $\tilde{\mathcal{K}}_{t_0}$. \end{Lemma}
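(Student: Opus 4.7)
My plan has three pieces: (i) produce a candidate surface $\tilde{\Sigma}_{X_0}$ as a $C^{1,\alpha}$ limit of smooth level sets of $U_{\varepsilon}$ passing through $X_0$; (ii) confirm that this limit sits inside $\{U=t_0\}$, hence locally in $\tilde{\mathcal{K}}_{t_0}$; (iii) use the $z$-translation symmetry $U_{\varepsilon}(x,z+s)=U_{\varepsilon}(x,z)-\varepsilon s$ to extract a rigid dichotomy (graph or vertical cylinder) from the vertical component of the limiting normal.

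\emph{Step 1 (compactness).} Fix $X_0=(x_0,z_0)\in\tilde{\mathcal{K}}_{t_0}$ and set $t_{\varepsilon}:=U_{\varepsilon}(X_0)=u_{\varepsilon}(x_0)-\varepsilon z_0$, so that $X_0\in\tilde{\Sigma}^{\varepsilon}_{t_{\varepsilon}}$ for every $\varepsilon$. Since $U_{\varepsilon}\to U\equiv t_0$ locally uniformly on $\tilde{\mathcal{K}}_{t_0}$, $t_{\varepsilon}\to t_0$. By Lemma \ref{regcorollary} the translating graphs $\tilde{\Sigma}^{\varepsilon}_{t_{\varepsilon}}$ are locally uniformly $C^{1,\alpha}$-bounded, so along a subsequence $\varepsilon_i\to0$ they converge in $C^{1,\beta}_{\text{loc}}$, $\beta<\alpha$, to a hypersurface $\tilde{\Sigma}_{X_0}$ through $X_0$ with local $C^{1,\alpha}$ bounds. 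Local uniform convergence $U_{\varepsilon_i}\to U$ forces $\tilde{\Sigma}_{X_0}\subset\{U=t_0\}=\{u=t_0\}\times\mathbb{R}$, and shrinking the neighbourhood we may assume this portion lies in $\tilde{\mathcal{K}}_{t_0}$.

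\emph{Step 2 (graph case).} The outward normals
\begin{equation*}
\nu_{\varepsilon}=\frac{(\nabla u_{\varepsilon},-\varepsilon)}{\sqrt{|\nabla u_{\varepsilon}|^2+\varepsilon^2}}
\end{equation*}
converge, after a further subsequence, to a continuous unit normal $\tilde{\nu}$ on $\tilde{\Sigma}_{X_0}$, and its vertical component $\langle\tilde{\nu},\partial_z\rangle$ lies in $[-1,0]$ by the explicit formula. If $\langle\tilde{\nu},\partial_z\rangle(X_0)\ne 0$, the $C^{1,\alpha}$ implicit function theorem realises $\tilde{\Sigma}_{X_0}$ near $X_0$ as a graph of some function $f$ over an open neighbourhood of $x_0$ in $M$; containment in $\{u=t_0\}\times\mathbb{R}$ identifies the base of $f$ as an open subset of $\{u=t_0\}$, and the $C^{1,\alpha}$ bounds are inherited from those on the embedding.

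\emph{Step 3 (cylinder case, via translation).} If $\langle\tilde{\nu},\partial_z\rangle(X_0)=0$, the tangent plane $T_{X_0}\tilde{\Sigma}_{X_0}$ contains $\partial_z$, so a short vertical segment through $X_0$ is already in $\tilde{\Sigma}_{X_0}$. The symmetry $U_{\varepsilon}(\cdot,\cdot+s)=U_{\varepsilon}(\cdot,\cdot)-\varepsilon s$ identifies $\tilde{\Sigma}^{\varepsilon}_{t_{\varepsilon}+\varepsilon s}$ as the vertical translate of $\tilde{\Sigma}^{\varepsilon}_{t_{\varepsilon}}$ by $s$, and $t_{\varepsilon}+\varepsilon s\to t_0$ for every fixed $s$. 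Consequently the limit hypersurface through $X_0+(0,s)$ extracted along the same subsequence equals $\tilde{\Sigma}_{X_0}+(0,s)$. For $s$ small enough that $X_0+(0,s)\in\tilde{\Sigma}_{X_0}$, both surfaces pass through this point with the same $C^{1,\alpha}$ tangent plane, and by the local graphical representation of $C^{1,\alpha}$ hypersurfaces they agree near $X_0+(0,s)$; an open-closed argument along the connected leaf then yields $\tilde{\Sigma}_{X_0}=\tilde{\Sigma}_{X_0}+(0,s)$ for all small $s$, hence for all $s\in\mathbb{R}$. Thus $\tilde{\Sigma}_{X_0}=S\times\mathbb{R}$ for some $C^{1,\alpha}$ hypersurface $S\subset\{u=t_0\}$.

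\emph{Main obstacle.} The delicate step is the propagation argument in Step 3: one must pass from the infinitesimal verticality at the single point $X_0$ to a genuinely global cylinder structure. The translation identity of $U_{\varepsilon}$ is the lever, but using it requires keeping subsequences consistent across the one-parameter family of translates (so that ``the leaf through $X_0+(0,s)$'' is unambiguously $\tilde{\Sigma}_{X_0}+(0,s)$), which is most cleanly handled by a diagonal selection combined with the uniqueness of the $C^{1,\alpha}$ tangent plane.
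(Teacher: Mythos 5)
Your Steps 1 and 2 follow the paper's strategy: fix the target point $X_0$, select the times $t_\varepsilon$ so that $X_0\in\tilde{\Sigma}^{\varepsilon}_{t_\varepsilon}$, use the uniform $C^{1,\alpha}$ bounds of Lemma~\ref{regcorollary} and Arzel\`a--Ascoli to extract a limit surface through $X_0$, confirm it lies in $\{U=t_0\}$ via local uniform convergence, and if the vertical component of the limiting normal is nonzero, apply the implicit function theorem to obtain a local graph; these points match the paper, which organises the compactness through exponential normal charts but otherwise does the same thing.

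The problem is in Step 3, and it is a genuine gap. From $\langle\tilde{\nu},\partial_z\rangle(X_0)=0$ you conclude ``a short vertical segment through $X_0$ is already in $\tilde{\Sigma}_{X_0}$,'' and then use $X_0+(0,s)\in\tilde{\Sigma}_{X_0}$ as the anchor for the translation argument. But containment of $\partial_z$ in the tangent plane does not place the vertical segment inside the surface; it only gives first-order tangency. A $C^{1,\alpha}$ (even smooth) hypersurface such as $\{x^1=z^3\}$ has horizontal normal at the origin and is foliated with its vertical translates $\{x^1=(z-s)^3\}$ exactly as your translation identity predicts, yet no vertical segment lies in it and it is not a cylinder. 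So the pointwise criterion ``vertical normal component vanishes at $X_0$'' neither forces $X_0+(0,s)\in\tilde{\Sigma}_{X_0}$ nor yields the global cylinder structure, and your open-closed propagation never gets started. The paper avoids this by not reasoning pointwise from the normal at $X_0$ at all: it partitions the base $\tilde{\mathcal{K}}_{t_0}\cap M$ into the open set $\Omega_G$ where the rescaled slopes $|\nabla\hat{u}_i|$ converge locally uniformly to a finite limit and the region $\Omega_C$ where they diverge, then uses the explicit identity $\tilde{\Sigma}^{i_j}_{t_{i_j}-\alpha\varepsilon_{i_j}}\to\tilde{\Sigma}_{X_0}+\alpha\,\mathbf{e}_{n+2}$ to show that over $\Omega_G$ the limits form a stack of graphs and that $\Omega_G\times\mathbb{R}$ is bounded by vertical cylinders. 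If you want to rescue your route you would need to replace the single-point verticality test by this locally uniform control of the slopes, which is what actually produces the all-or-nothing dichotomy at the level of an entire leaf.
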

\begin{proof}
The sought-after surfaces are constructed using a pointwise approach similar to that used by Heidusch \cite{He01} to prove local uniform $C^{1,1}$ regularity estimates for the level-sets of the weak solution to inverse mean curvature flow. In particular, we fix a \textit{target point} 
\begin{equation}\label{target1}
X_0=(x_0,z_0)\in \tilde{\mathcal{K}}_{t_0},
\end{equation} 
and construct a surface containing that point. 

Given the convergent sequence $\varepsilon_i\to0$ that produces the limit $u$ of the elliptic regularised solution $u_{\varepsilon}$ as in (\ref{subseq}), we consider the corresponding sequence of times, $t_i,$ at which the surfaces $\tilde{\Sigma}^i_{t_i}=\text{graph}\left(\frac{u_i}{\varepsilon_i}-\frac{t_i}{\varepsilon_i}\right)$ pass through the target point $X_0$. This is possible because the translating graphs $\tilde{\Sigma}^i_t$ for $-\infty<t<\infty$ foliate $\Omega_i\times\mathbb{R}$, thus for every $i$ there is a unique $t_i$ such that $X_0\in \tilde{\Sigma}^i_{t_i}$. \\

In order to write each surface $\tilde{\Sigma}^{i}_{t_i}$ locally as a graph over its tangent space $T_{X_0}\tilde{\Sigma}^{i}_{t_i}$, we use the exponential map to work locally in normal coordinate charts on small Euclidean balls $B^{n+2}$. In particular, let $\iota(X_0)$ be the injectivity radius of $X_0$ in $M\backslash E_0\times\mathbb{R}$, and set 
\begin{equation}\label{ddd1}
d=d(X_0)=\min(\iota(X_0),\text{dist}(X_0,\partial \tilde{\mathcal{K}}_{t_0})).\end{equation}
By Corollary \ref{regcorollary} there exists $\varepsilon_0>0$ such that for all $t$ and $\varepsilon\leq\varepsilon_0$, the surface pieces $\tilde{\Sigma}^i_{t_i}\cap B^{M\times\mathbb{R}}_d(X_0)$ are uniformly $C^{1,\alpha}$ bounded in $t$ and $\varepsilon$. Now consider the exponential map
\begin{equation}\label{expo}
\exp_{X_0}=(\exp_{x_0},id_{\mathbb{R}}): T_{X_0}(M\times\mathbb{R})\cap B^{n+2}_d(0,z_0)\to B^{M\times\mathbb{R}}_d(X_0), 
\end{equation}
and set 
\begin{equation}\label{expsig}
\hat{\Sigma}^i_{t_i}=\exp^{-1}_{X_0}(\tilde{\Sigma}^i_{t_i}\cap B^{M\times\mathbb{R}}_d(X_0))\subset T_{X_0}(M\times\mathbb{R}).
\end{equation}
In the $\mathbb{R}$-direction the exponential map is just the identity, thus each surface $\hat{\Sigma}^{i}_{t_i}$ translates downwards in exactly the same manner as $\tilde{\Sigma}^i_{t_i}$. Furthermore, the surfaces $\hat{\Sigma}^i_{t_i}$ are uniformly $C^{1,\alpha}$ bounded in $t$ and $\varepsilon$.

Then there exists $R>0$, depending only on the locally uniform $C^{1,\alpha}$ bound, such that $ B^{n+2}_R(\hat{X}_0)\subseteq\hat{\Sigma}^{i}_{t_i}$ and thus the surface pieces $\hat{\Sigma}^{i}_{t_i}\cap B^{n+2}_R(\hat{X}_0)$ possess uniform $C^{1,\alpha}$ bounds. Here $\hat{X}_0=(\hat{x}_0,\hat{z}_0)=\exp_q^{-1}(X_0)$ is our target point.

The corresponding normals $\hat{\nu}_i(\hat{X}_0)$ to $\hat{\Sigma}^{i}_{t_i}\cap B^{n+2}_R(\hat{X}_0)$ create a sequence, a subsequence of which converges uniformly to a vector $\hat{\nu}(\hat{X}_0)$. The normal space to $\hat{\nu}(\hat{X}_0)$ defines a hyperplane $\hat{T}$ containing $\hat{X}_0$. Then by taking $i\gg1$ large enough, we can write the converging surfaces $\hat{\Sigma}^{i}_{t_i}\cap B^{n+2}_R(\hat{X}_0)$ as graphs of $C^{1,\alpha}$ functions $\hat{w}_i$ over $\hat{T}$. By reducing $R$, and taking $i\gg1$ large enough, we can then write each $\hat{\Sigma}^{i}_{t_i}$ locally as the graph of $\hat{w}_i$ over $\hat{T}\cap B^{n+1}_R(\hat{x}_0).$

 By Arzela-Ascoli, there exists a further subsequence $\hat{w}_{i_j}$ and a $C^1$ function $\hat{w}:\hat{T}\cap B^{n+1}_R(\hat{x}_0)\to\mathbb{R}$ such that 
 \begin{equation}\label{subsequence1}
 \hat{w}_{i_j}\rightarrow\hat{w}\quad\text{ in }C^{1}(\hat{T}\cap B^{n+1}_R(\hat{x}_0)).
 \end{equation} 
 Here $\hat{w}$ is locally the graph of a surface $\hat{\Sigma}_{\hat{X}_0}$ around $\hat{X}_0$, and $\hat{T}=T_{\hat{X}_0}\hat{\Sigma}_{X_0}$. Since the $C^{1,\alpha}$ bounds on $\hat{w}_i$ were independent of $i$, it follows that $\hat{w}\in C^{1,\alpha}(\hat{T}\cap B^{n+1}_R(\hat{x}_0))$, with the same uniform $C^{1,\alpha}$ bounds as $\hat{w}_i$. Thus $\exp_{q}(\hat{\Sigma}_{\hat{X}_0}):=\tilde{\Sigma}_{X_0}\cap B^{M\times\mathbb{R}}_d(X_0)$ is uniformly $C^{1,\alpha}$ bounded. By successively taking subsequences, the $\tilde{\Sigma}^i_{t_i}$ converge to a complete hypersurface that we will henceforth denote by $\tilde{\Sigma}_{X_0}$, since it coincides with $\tilde{\Sigma}_{X_0}$ near $X_0$.
 
Now $X_0\in\{U=t_0\}$ where, by hypothesis, $t_0:=\lim_{i\to\infty}t_i$ is a jump time. In order to argue that $\tilde{\Sigma}_{X_0}$ is contained in the set $\{U=t_0\}$, we note that it is a consequence of the above construction that any $y\in \tilde{\Sigma}_{X_0}$ is the limit of a sequence $y_i\in \tilde{\Sigma}^i_{t_i}$. The local uniform convergence $u_i\to u$ implies that $\hat{u}_i\to \hat{u}$ uniformly on $B^{n+1}_d(0)$. Thus the uniform convergence $U_i\rightarrow U$ on $B^{M\times\mathbb{R}}_d(X_0)$, together with the fact that $\lim_{i\to\infty}y_i=y$ then implies that $U(y)=t_0$, since
\begin{equation*}
 |U_i(y_i)-U(y)|\leq|U_i(y_i)-U_i(y)|+|U_i(y)-U(y)|\to 0,
\end{equation*}
and $\displaystyle\smash{\lim_{i\to\infty}}U^i(y_i)=\displaystyle\smash{\lim_{i\to\infty}}t_i=t_0,$
thus $\tilde{\Sigma}_{X_0}\subset\{U=t_0\}$. \\

This approach enables one to choose any point $X_0$ in the jump region $\tilde{\mathcal{K}}_{t_0}$ and construct the corresponding surface $\tilde{\Sigma}_{X_0}$ containing $X_0$. Since each $\tilde{\Sigma}_{X_0}$ is the limit of the graphs $\tilde{\Sigma}^i_{t_i}$ with local uniform $C^{1,\alpha}$ bounds, it is clear that each $\tilde{X}_0$ is either a vertical cylinder or a graph over an open subset of $\tilde{\mathcal{K}}_{t_0}\cap M$. Therefore, let $\Omega_G$ denote the open region in $\tilde{\mathcal{K}}_{t_0}\cap M$ where $|\nabla \hat{u}_i|$ converges locally uniformly to a finite limit, and let $\Omega_C$ denote the region where $|\nabla\hat{u}_i|$ converges to infinity. Then the translating nature of $\tilde{\Sigma}^{\varepsilon}_t$ together with the above construction dictates that the $\tilde{\Sigma}^i_{t_i}$ converge to a graph $\tilde{\Sigma}_{X_0}$ over $\Omega_G$, lying in a stack 
\begin{equation}\label{stack}
\{\tilde{\Sigma}_{X_{\alpha}}\}=\tilde{\Sigma}_{X_0}+\alpha \textbf{e}_{n+2},\quad\quad \alpha\in\mathbb{R},
\end{equation}
of vertical translates of $\tilde{\Sigma}_{X_0}$. To see this, note that 
\begin{equation*}X_0=(x_0,z_0)\in\tilde{\Sigma}^{i_j}_{t_{i_j}}=\text{graph}\left(\frac{u_{i_j}}{\varepsilon_{i_j}}-\frac{t_{i_j}}{\varepsilon_{i_j}}\right)\to\text{graph}(w)=\tilde{\Sigma}_{X_0},
\end{equation*}
\noindent implies $X_{\alpha}:=(x_0,z_0+\alpha)\in\tilde{\Sigma}^{i_j}_{t_{i_j}-\alpha\varepsilon_{ij}}$, where
\begin{equation*}\tilde{\Sigma}^{i_j}_{t_{i_j}-\alpha\varepsilon_{ij}}=\text{graph}\left(\frac{u_{i_j}}{\varepsilon_{i_j}}-\frac{t_{i_j}-\alpha\varepsilon_{i_j}}{\varepsilon_{i_j}}\right)\to\text{graph}(w)+\alpha\textbf{e}_z:=\tilde{\Sigma}_{X_\alpha},
\end{equation*}
where $w:=\exp_q(\hat{w})$. Therefore $\Omega_G\times\mathbb{R}$ is bounded by vertical cylinders, and filled by the stacks produced by the family $\{\tilde{\Sigma}_{X_{\alpha}}\}$ of vertical translations of each graph $\tilde{\Sigma}_{X_0}$.
\end{proof}

The possibility of two surfaces $\tilde{\Sigma}_{P_1}$ and $\tilde{\Sigma}_{P_2}$ from Lemma \ref{foliation1} touching tangentially at one point $P$, such that the outward unit normals agree at $P$ and $\tilde{\Sigma}_{P_1}$ lies outside $\tilde{\Sigma}_{P_2}$ (in the direction of the outward normal near $P$) is ruled out by the strong maximum principle. Furthermore, the intersection of two surfaces in the limit is ruled out by the translation invariance of the surfaces $\tilde{\Sigma}^{\varepsilon}_t$ and their local uniform $C^{1,\alpha}$ bounds.

We now argue that we can construct a ``normal" vector field $\tilde{\nu}$ on $\tilde{\mathcal{K}}_{t_0}$ using the surfaces from the proof of Lemma \ref{foliation1}. Since the limit surfaces are vertical cylinders or stacks of translation invariant graphs, the normal vector field $\tilde{\nu}$ to the family of surfaces in $\mathcal{K}_{t_0}$ is translation invariant, and we need only show that we can construct $\tilde{\Sigma}_{X_0}$ for each $X_0\subset\tilde{\mathcal{K}}_{t_0}\cap M$. Therefore, choose a dense set of points in $\tilde{\mathcal{K}}_{t_0}\cap M$. This corresponds to a countable set of points $\{p_i\}$, and for each such $p_i\in \tilde{\mathcal{K}}_{t_0}\cap M,$ we consider the convergent subsequence $\varepsilon_{i}$ such that $\tilde{\Sigma}^{i}_{t_{i}}$ converges to the hypersurface ${\tilde{\Sigma}_{P_i}}$ in $\tilde{\mathcal{K}}_{t_0}$, where $P_i:=(p_i,0)$. Then by taking a diagonal subsequence $\varepsilon_{i_*}$, we obtain local convergence of $\tilde{\Sigma}^{i_*}_{t_{i_*}}$ to $\
\tilde{\Sigma}_{P_i}$ for every point $p_i$ in the dense set.\\

Now consider a point $p_0\in \Omega_G$ such that $p_0$ is not in $\{p_i\}$. We wish to argue that we obtain local convergence to $\tilde{\Sigma}_{P_0}$ via the convergent sequence $\varepsilon_{i_*}$. There exists a point $p_i$ in the dense subset such that $\text{dist}(p_i,p_0)<d/10$. Let 
\begin{equation}
d_{P}:=\min(\iota(P),\text{dist}(P,\partial \tilde{\mathcal{K}}_{t_0})).
\end{equation}
By Corollary \ref{regcorollary}, the surfaces $\tilde{\Sigma}^{\varepsilon}_{t}$ are uniformly bounded in $B_{d_{p_i}}^{M\times\mathbb{R}}(P_i)$. Then since $B_{d_{P_i}/10}(P_0)\subset B_{d_{P_i}}(P_i)$, the surfaces $\tilde{\Sigma}^{\varepsilon}_{t}\cap B_{d_{P_i}/10}(P_0)$ possess the same uniform $C^{1,\alpha}$ bounds and we can take a convergent subsequence of $\varepsilon_{i_*}$ such that we obtain convergence to a limit surface $\tilde{\Sigma}_{P_0}$ in $B_{d_{P_i}/10}(P_0)$. Therefore this approach constructs a complete graph through each point $x_0\in\Omega_G$, and we obtain the vector field $\tilde{\nu}$ in all of $\Omega_G$.

Then given the uniform $C^{0,\alpha}$ normal vector field $\tilde{\nu}$ of the hypersurfaces constructed through the dense set of points $\{p_i\}$, we can extend the vector field $\tilde{\nu}$ to any points that have been missed in $\Omega_C$. Then translating $\tilde{\nu}$ in the $e_{n+2}$ direction, we obtain a normal vector field on the entire jump region $\tilde{\mathcal{K}}_{t_0}$. For the remainder of this work, let $\tilde{\nu}$ denote this translation invariant normal vector field to the surfaces $\tilde{\Sigma}_{X_0}$ foliating $\tilde{\mathcal{K}}_{t_0}$.

\begin{Lemma}\label{minimising}
Let $\tilde{\nu}$ denote the normal vector field to the surfaces foliating the jump region $\tilde{\mathcal{K}}_{t_0}$, as above. Then each surface $\tilde{\Sigma}_{X_0}$ in the jump region bounds a Caccioppoli set that minimises $\mathcal{J}_{U,\tilde{\nu}}$ in $\tilde{\mathcal{K}}_{t_0}$.
\end{Lemma}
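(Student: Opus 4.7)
The plan is to deduce minimisation of the limit surface from the minimisation of the smooth approximators by invoking Compactness Property \ref{compactness}. The starting observation is that for each $\varepsilon>0$, the smooth sublevel set $\tilde{E}^{\varepsilon}_t=\{U_{\varepsilon}<t\}$ minimises $\mathcal{J}_{U_{\varepsilon},\nu_{\varepsilon}}$ in any slab $\tilde{E}^{\varepsilon}_b\setminus\tilde{E}^{\varepsilon}_a$ with $a\leq t\leq b$, as already recorded in the proof of Lemma \ref{regcorollary} via calibration by the smooth normal $\nu_{\varepsilon}=\bar{\nabla}U_{\varepsilon}/|\bar{\nabla}U_{\varepsilon}|$. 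The target is then to pass to the limit along the diagonal subsequence $\varepsilon_{i_*}$ used to construct $\tilde{\Sigma}_{X_0}$ and $\tilde{\nu}$.

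Next, I would fix $X_0\in\tilde{\mathcal{K}}_{t_0}$ and let $\tilde{E}_{X_0}$ denote the Caccioppoli set bounded by $\tilde{\Sigma}_{X_0}$; a consistent choice of inside and outside is possible because the limit surfaces foliate $\tilde{\mathcal{K}}_{t_0}$ in the stacks (\ref{stack}), with $\tilde{\nu}$ serving as a coherent outward orientation. Along the subsequence $\varepsilon_{i_*}$ produced in Lemma \ref{foliation1}, the times $t_{i_*}\to t_0$ and the boundaries $\tilde{\Sigma}^{i_*}_{t_{i_*}}=\partial\tilde{E}^{i_*}_{t_{i_*}}$ converge locally in $C^{1,\alpha}$ to $\tilde{\Sigma}_{X_0}$, with outer unit normals $\nu_{i_*}\to\tilde{\nu}$ locally uniformly.

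I would then verify the hypotheses of Compactness Property \ref{compactness} with $\tilde{\Omega}=\tilde{\mathcal{K}}_{t_0}$: the local $C^{1,\alpha}$ boundary convergence and local uniform normal convergence are quoted from Lemma \ref{foliation1}; the local uniform convergence $U_{i_*}\to U$ was recorded in (\ref{subseq})--(\ref{Udefn1}); and the uniform gradient bound $\sup_{\tilde{A}}|\bar{\nabla}U_{i_*}|\leq C(\tilde{A})$ on $\tilde{A}\subset\subset\tilde{\mathcal{K}}_{t_0}$ follows from the interior Lipschitz estimate (\ref{lipschitz}) applied in the $M$-factor and trivially in the $\mathbb{R}$-factor. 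To match the ambient sets required by Compactness Property \ref{compactness}, given $\tilde{A}\subset\subset\tilde{\mathcal{K}}_{t_0}$, I pick $a<t_0<b$ so that $\tilde{A}\subset\tilde{E}^{i_*}_b\setminus\tilde{E}^{i_*}_a$ for $i_*\gg 1$; this is possible because $U\equiv t_0$ on $\tilde{\mathcal{K}}_{t_0}$ and $U_{i_*}\to U$ locally uniformly. Compactness Property \ref{compactness} then yields that $\tilde{E}_{X_0}$ minimises $\mathcal{J}_{U,\tilde{\nu}}$ in $\tilde{\mathcal{K}}_{t_0}$, as required.

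The main technical point I expect to have to spell out carefully is that all of the above convergences are available along a single subsequence. Since $\tilde{\nu}$ was extracted diagonally over a countable dense subset $\{p_i\}$ of $\tilde{\mathcal{K}}_{t_0}\cap M$, I would need to argue that for an arbitrary target point $X_0$ the surface $\tilde{\Sigma}_{X_0}$ can be produced as a further subsequence of $\varepsilon_{i_*}$, so that \emph{simultaneously} the boundary, normal, and $U_{i_*}$ convergences hold for the same indices. This is already the content of the nested-subsequence argument used to extend $\tilde{\nu}$ beyond $\{p_i\}$: the uniform $C^{1,\alpha}$ bounds of Lemma \ref{regcorollary} let one extract one more convergent subsequence near $X_0$ without disturbing the normal constructed earlier, after which the identification $\nu_{\infty}(X_0)=\tilde{\nu}(X_0)$ follows from uniqueness of the tangent hyperplane ruled in by the strong-maximum-principle non-touching argument noted between Lemmas \ref{foliation1} and \ref{minimising}.
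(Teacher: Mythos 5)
Your proposal is correct and follows essentially the same route as the paper: both invoke the calibration argument (Smooth Flow Lemma/Lemma \ref{regcorollary}) to establish that the $\tilde{E}^{i}_{t_i}$ minimise $\mathcal{J}_{U_i,\nu_i}$, and then apply Compactness Property \ref{compactness} along the convergence extracted in Lemma \ref{foliation1} to conclude that $\tilde{E}_{X_0}$ minimises $\mathcal{J}_{U,\tilde{\nu}}$ in $\tilde{\mathcal{K}}_{t_0}$. The paper's proof is terser, leaving implicit the hypothesis verification, the slab-vs-jump-region domain matching, and the diagonal-subsequence bookkeeping that you correctly identify and spell out.
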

\begin{proof}
Consider the Caccioppoli set $\tilde{E}$ that is bounded by the limit hypersurface $\tilde{\Sigma}_{X_0}$, such that $\tilde{\nu}$ is the outward unit normal of the relative boundary $\partial \tilde{E}\cap\tilde{\mathcal{K}}_{t_0}$. The sets $\tilde{E}^{i}_{t_{i}}$ minimize the functional $J_{U_i,\nu_i}$ in $\tilde{\mathcal{K}}_{t_0}$, where $\nu_i=\frac{\bar{\nabla}U_i}{|\bar{\nabla}U_i|}$. Passing these sets to limits as in the proof of Lemma \ref{foliation1} to obtain the limit surface $\tilde{\Sigma}_{X_0}$, Theorem \ref{compactness} then says that $\tilde{E}$ minimises $\mathcal{J}_{U,\tilde{\nu}}$ in $\tilde{\mathcal{K}}_{t_0}$. 
\end{proof}

Collecting the above results, we obtain a family of $C^{1,\alpha}_{\text{loc}}$ hypersurfaces foliating $\Omega_G\times\mathbb{R}$, and by extending the family of cylindrical hypersurfaces in $\Omega_C\times\mathbb{R}$ to any missed points in $\Omega_C$, we obtain a foliation of the entire interior region $\tilde{\mathcal{K}}_{t_0}$. At each point $X_0=(x_0,t_0)$, the corresponding leaf of the foliation passing through $X_0$ is constructed by taking the limit of the $\Sigma^{\varepsilon}_t$ locally around $X_0$, as in Lemma \ref{foliation1}. This completes the proof.\end{jumpproof}

\section{Variational formulation of weak solutions.}\label{2.3}
By freezing $|\nabla u|-\text{tr}_{\Sigma_t}K$ and treating it as a bulk term, one may interpret $(**)$ as the Euler-Lagrange equation of the functional
\begin{equation}\label{functional}
\mathcal{J}_{u,\nu}(v):=\int|\nabla v|+v\left(|\nabla u|-\left(g^{ij}-\nu^i\nu^j\right)K_{ij}\right)dx.
\end{equation}
For a smooth family of solutions of $(*)$, we will see below that the corresponding time-of-arrival function $u$ defined by (\ref{time of arrival}) satisfies 
\begin{equation}\label{Mvar}
\mathcal{J}_{u,\nu}(u)\leq\mathcal{J}_{u,\nu}(v),
\end{equation}
among competing locally Lipschitz functions $v$, that differ from $u$ on a compact subset of $M\backslash\bar{E_0}$. The relationship between the variational formulation (\ref{Mvar}) and the functional (\ref{functional1}) is then given by the following Lemma.
\begin{Lemma}\label{equiv}
Let $u$ be a locally Lipschitz function in the open set $\Omega$, and $\nu$ a measurable vector field on $T\Omega$. Then $u$ satisfies (\ref{Mvar}) on $\Omega$ if and only if for each $t$, $E_t:=\{u<t\}$ minimizes (\ref{functional1}) in $\Omega$.
\end{Lemma}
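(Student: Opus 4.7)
The plan is to derive both implications from a single coarea identity. For any locally Lipschitz $v$ that agrees with $u$ off a compact set $A\subset\Omega$, set $F_t:=\{v<t\}$. The coarea formula gives $\int_A|\nabla v|\,dx=\int_{-\infty}^{\infty}|\partial^* F_t\cap A|\,dt$ and likewise for $u$. For the bulk term, the pointwise layer-cake identity $v(x)-u(x)=\int_{-\infty}^{\infty}(\chi_{E_t}(x)-\chi_{F_t}(x))\,dt$ (verified by comparing the sign of the integrand with that of $v-u$), together with Fubini, rewrites $\int_A(v-u)\bigl(|\nabla u|-(g^{ij}-\nu^i\nu^j)K_{ij}\bigr)\,dx$ as $\int_{-\infty}^{\infty}\bigl[\int_{E_t\cap A}-\int_{F_t\cap A}\bigr]\bigl(|\nabla u|-(g^{ij}-\nu^i\nu^j)K_{ij}\bigr)\,dt$. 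Collecting the two contributions yields
\begin{equation*}
\mathcal{J}_{u,\nu}(v)-\mathcal{J}_{u,\nu}(u)=\int_{-\infty}^{\infty}\bigl[\mathcal{J}^A_{u,\nu}(F_t)-\mathcal{J}^A_{u,\nu}(E_t)\bigr]\,dt,
\end{equation*}
with $F_t\Delta E_t\subset A$ for every $t$ because $v=u$ on $A^c$.

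The ``only if'' direction is then immediate: if every $E_t$ minimises $\mathcal{J}^A_{u,\nu}$ against compactly supported Caccioppoli competitors, the integrand above is nonnegative a.e., hence $\mathcal{J}_{u,\nu}(v)\geq\mathcal{J}_{u,\nu}(u)$ for every admissible $v$ and (\ref{Mvar}) holds.

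For the ``if'' direction, fix a level $t_0$ and a set competitor $F$ with $F\Delta E_{t_0}\subset\subset A$. For small $h>0$ build a locally Lipschitz $v_h$, agreeing with $u$ outside $A$, whose sublevel sets equal $E_t$ for $t\notin(t_0-h,t_0+h)$ and whose slab $\{v_h<t\}$, $t\in(t_0-h,t_0+h)$, is a controlled interpolation between the nearby $E_{t_0\pm h}$ and $F$; concretely, first thicken $F$ to some $F_h\supset F$ with $F_h\Delta F\subset\{|u-t_0|<h\}$, then define $v_h$ via a signed-distance cutoff to $\partial F_h$ on $F\Delta E_{t_0}$ so that $\{v_h<t\}\to F$ in $L^1$ as $h\to0$. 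Substituting $v_h$ into the identity and invoking (\ref{Mvar}) gives $\int_{t_0-h}^{t_0+h}\mathcal{J}^A_{u,\nu}(\{v_h<t\})\,dt\geq\int_{t_0-h}^{t_0+h}\mathcal{J}^A_{u,\nu}(E_t)\,dt$; dividing by $2h$ and letting $h\to0$, using lower semicontinuity of the perimeter together with $L^1$-continuity of $t\mapsto E_t$ at continuity points of $u$, delivers $\mathcal{J}^A_{u,\nu}(F)\geq\mathcal{J}^A_{u,\nu}(E_{t_0})$ for a.e.\ $t_0$. For $t_0$ at a jump of $u$, the inequality propagates by approximating from continuity points on either side and applying lower semicontinuity of the perimeter to the $L^1$-limits of $E_t$.

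The main obstacle will be the construction of $v_h$: since $u$ need not equal $t_0$ on $\partial F$, naive choices such as $v_h\equiv t_0$ on $F\Delta E_{t_0}$ fail to be continuous, and a careful distance-function or slicing interpolation, along the lines of Lemma~1.1 of \cite{HI01}, is required. The new bulk term $(g^{ij}-\nu^i\nu^j)K_{ij}$ introduces no additional difficulty because it is linear in $\chi_F$ and so enters the coarea decomposition identically to $|\nabla u|$.
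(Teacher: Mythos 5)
Your proof follows exactly the coarea-decomposition strategy of \cite[Lemma 1.1]{HI01} that the paper cites, and you correctly identify that the modified bulk term $|\nabla u|-(g^{ij}-\nu^i\nu^j)K_{ij}$ passes through the layer-cake/Fubini rewriting unchanged, which is precisely the observation the paper's one-line proof relies on. Two cosmetic remarks: your ``if'' and ``only if'' labels are swapped relative to the statement of the Lemma, and the construction of the interpolating competitor $v_h$ in the harder implication is left as a sketch --- but that is the very step the paper also delegates to \cite{HI01}, so the approach is the same.
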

\begin{proof}
This follows exactly as in \cite[Lemma 1.1]{HI01}, with $|\nabla u|-(g^{ij}-\nu^i\nu^j)K_{ij}$ replacing the bulk term $|\nabla u|$.\\ 
\end{proof} 
This equivalence between the two variational formulations also extends to the initial value problem
\begin{equation}\label{initial}
 \begin{aligned}u\in &C^{0,1}_{\text{loc}}(M),\,\,\nu \text{ a measurable vector field on }T(M\backslash E_0),\\
 &E_0=\{u<0\},\text{ and } u\text{ satisfies }(\ref{Mvar}) \text{ in } M\backslash E_0.\\
\end{aligned}
\end{equation}
To see this equivalence, let $E_t$ be a nested family of open sets in $M$, closed under ascending union, and define $u$ as in the statement of Lemma \ref{equiv} by the characterisation $E_t=\{u<t\}$. Then using Lemma \ref{equiv} and approximating up to the boundary, we see that (\ref{initial}) is equivalent to 
\begin{equation}\label{initial2}
\begin{aligned}
u&\in C^{0,1}_{\text{loc}}M,\,\,\nu \text{ a measurable vector field on }T(M\backslash E_0)\\
&\text{ and }E_t\text{ minimises }J_{u,\nu}\text{ in }M\backslash E_0\text{ for each }t>0.
\end{aligned}
\end{equation}
Lastly, by approximating $s\searrow t$, we see that (\ref{initial}) and (\ref{initial2}) are equivalent to
\begin{equation}\label{1.9}
\begin{aligned}
&u\in C^{0.1}_{\text{loc}}(M),\,\,\nu \text{ a measurable vector field on }T(M\backslash E_0)\\
&\text{ and }\{u\leq t\}\text{ minimises }J_{u,\nu}\text{ in } M\backslash E_0\text{ for each }t\geq0.
\end{aligned}
\end{equation}
\vskip 0.1 true in

We now present the precise definition of weak solutions to $(**)$. In the previous section we highlighted the need to define the normal vector field $\nu$ in jump regions in order to incorporate the $P=(g^{ij}-\nu^i\nu^j)K_{ij}$ term into a variational formulation of weak solutions to $(**)$. We showed that taking an appropriate limit of the smooth translating solutions $\tilde{\Sigma}^{\varepsilon}_t=\{U_{\varepsilon}=t\}$ of $(*)$, provides a constructive method of foliating the interior $\tilde{\mathcal{K}}_{t_0}$ of the jump region $\{U=t_0\}=\{u=t_0\}\times\mathbb{R}$, one dimension higher, by $C^{1,\alpha}_{\text{loc}}$ hypersurfaces $\tilde{\Sigma}_{X_0}$ in $M\times\mathbb{R}$ with uniform $C^{0,\alpha}_{\text{loc}}$ unit normal vector field $\tilde{\nu}$. Each such hypersurface $\tilde{\Sigma}_{X_0}$ in the foliation is either (part of) a vertical cylinder, or is a smooth graph over an open subset of $\tilde{\mathcal{K}}_{t_0}$, in the stack 
\begin{equation}\label{stack2}
\tilde{\Sigma}+\alpha\,\textbf{e}_{n+2},\quad \alpha\in\mathbb{R},
\end{equation}
\noindent of vertical translates of $\tilde{\Sigma}$. The normal vector field $\tilde{\nu}$ to each vertical cylinder is perpendicular to the $z$-direction, and could therefore be projected to $M$ without loss of information. However, in the case of the graphical hypersurfaces (\ref{stack2}), information would be lost if one were to define the vector field $\nu$ in (\ref{functional1}) to be the projection of $\tilde{\nu}$ to $TM$. 

This motivates the choice to formulate the weak solution to $(**)$ one dimension higher, in terms of a translation invariant function $U(x,z)=u(x)\in C^{0,1}_{\text{loc}}(M\times\mathbb{R})$, and a translation invariant vector field $\bar{\nu}\in C^{0,\alpha}_{\text{loc}}(T((M\backslash E_0)\times\mathbb{R}))$ that extends $\bar{\nabla}U/|\bar{\nabla}U|$ across the jump region. One then considers the analogously defined functionals $\mathcal{J}_{U,\bar{\nu}}$ to (\ref{functional}) and (\ref{functional1}) for such pairs $(U,\bar{\nu})$ in $M\times\mathbb{R}$, and we remark that Lemma \ref{equiv} and the initial value problem equivalences (\ref{initial})-(\ref{1.9}) hold in $M\times\mathbb{R}$ (for general $U$ and $\bar{\nu}$ that are not necessarily translation invariant, like we will demand for the weak solution of $(**)$). 

In Lemma \ref{minimising} we showed that each of the surfaces $\tilde{\Sigma}_{X_0}$ foliating the jump region $\tilde{\mathcal{K}}_{t_0}$ bounds a Caccioppoli set that minimises $J_{U,\tilde{\nu}}$ in the jump region $\tilde{\mathcal{K}}_{t_0}$. Together with Lemma \ref{equiv}, this motivates the restriction in Definition \ref{weakdefinition} below that at each point $X\in (M\backslash\bar{E}_0)\times\mathbb{R}$, $\tilde{\nu}(X)$ be the normal vector to a $C^{1,\alpha}$ hypersurface that bounds a Caccioppoli set minimising $J_{U,\bar{\nu}}$ in $(M\backslash E_0)\times\mathbb{R}$.
\begin{Definition}\label{weakdefinition}
Let  $E_0\subset M$ be a precompact, open set with $C^2$ boundary $\Sigma_0=\partial E_0$. We call the pair $(U,\bar{\nu})$ a weak solution of $(**)$ with initial condition $E_0$ if $U\in C^{0,1}_{\text{loc}}(M\times\mathbb{R})$ and $\bar{\nu}\in C^{0,\alpha}_{\text{loc}}(T(M\backslash E_0)\times\mathbb{R})$ satisfy
\begin{enumerate}
\item[$(i)$] $U$ is translation invariant in the vertical direction. In particular, there exists a locally Lipschitz function $u:M\to\mathbb{R}$ such that $U(x,z)=u(x)$ and $u$ satisfies
\begin{equation*}
\begin{aligned}
&\cdot u(x)\geq0\,\,\forall\, x\in M\backslash E_0, \quad\quad\quad\quad\quad\quad\\
&\cdot u\big|_{\partial E_0}=0,\quad u(x)<0\quad\forall x\in E_0,\\
&\cdot u(x)\to+\infty\,\,\text{as dist}(x,E_0)\to\infty.
\end{aligned}
\end{equation*}
\item[$(ii)$] The set $\tilde{E}_t=\{U<t\}$ minimises $J_{U,\bar{\nu}}$ in $(M\backslash E_0)\times\mathbb{R}$ for each $t>0$. At jump times $t_0$, each point $X_0=(x_0,z_0)$ in the interior $\tilde{\mathcal{K}}_{t_0}$ of the jump region $\{U=t_0\}$ lies in the boundary $\partial \tilde{E}_{X_0}\in C^{1,\alpha}_{\text{loc}}$ of a Caccioppoli set $\tilde{E}_{X_0}$ that minimises $J_{U,\bar{\nu}}$\ in $\tilde{\mathcal{K}}_{t_0}$.
\item[$(iii)$] $\bar{\nu}$ is a translation invariant, unit vector field such that  
\begin{equation*}
\begin{aligned}
&\cdot\bar{\nu}(X+\alpha\,\textbf{e}_{z})=\bar{\nu}(X)\quad\forall\, X\in (M\backslash E_0)\times\mathbb{R},\,\,\alpha\in\mathbb{R},\\
&\cdot \bar{\nu}(X)\text{ is the normal vector to }\partial \tilde{E}_t\text{ at each point }X\in\partial \tilde{E}_t, \\
&\cdot  \bar{\nu}(X)\text{ is the normal vector to }\partial \tilde{E}_{X_0}\text{ at each point }X\in\partial \tilde{E}_{X_0},\\
&\,\,\text{ at jump times }t_0.
\end{aligned}
\end{equation*}
\end{enumerate}
\end{Definition}
\textbf{Remarks} 1. Unlike in the weak formulation of inverse mean curvature flow, which asks only that $E_t=\{u<t\}$ minimise $J_{u,\nu}$ for each $t>0$, we require the variational principle (\ref{varprincip2}) for $J_{U,\bar{\nu}}$ to be satisfied \textit{everywhere}, in particular in the interior of the jump region.\\
2. By Lemma \ref{equiv}, any weak solution $(U(x,z):=u(x),\nu)$ of $(**)$ satisfies (\ref{initial}) on $(M\backslash\bar{E_0})\times\mathbb{R}.$ Furthermore, we find that $(u,\nu_M:=\bar{\nu}\big|_{TM})$ satisfies (\ref{Mvar}) in $M\backslash\bar{E}_0$.

\begin{Lemma}\label{reduction}
Let $(U(x,z):=u(x),\bar{\nu})$ be a weak solution of $(**)$ with initial condition $E_0$. Then the pair $(u,,\nu_M)$ satisfies (\ref{Mvar}) on $M\backslash \bar{E}_0$, and $E_t=\{u<t\}$ minimises $J_{u,\nu_M}$ for each $t>0$, where $\nu_M:=\bar{\nu}\big|_{TM}.$
\end{Lemma}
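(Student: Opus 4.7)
The plan is to use Lemma \ref{equiv} to reduce the claim to showing that for each $t > 0$, the set $E_t := \{u < t\}$ minimises $\mathcal{J}_{u, \nu_M}$ in $M \setminus \bar{E}_0$; the variational characterisation (\ref{Mvar}) for $(u, \nu_M)$ then follows automatically. The strategy is to lift any competitor in $M$ to a tall cylindrical competitor in $M \times \mathbb{R}$, apply the minimality of $\tilde E_t$ provided by condition $(ii)$ of Definition \ref{weakdefinition}, and extract an $O(L)$ inequality from which the desired $M$-inequality emerges after sending $L \to \infty$.

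First I would fix $t > 0$ and a Caccioppoli competitor $F \subset M$ with $F \Delta E_t \subset\subset A$ for some compact Lipschitz set $A \subset M \setminus \bar E_0$, and for each $L > 0$ form the truncated cylindrical lift
$$\tilde F_L := (F \times (-L, L)) \cup (E_t \times (\mathbb{R} \setminus [-L, L])).$$
Since $F \Delta E_t \subset A$, we have $\tilde F_L \Delta \tilde E_t \subset A \times (-L, L)$, which is compactly contained in $(M \setminus E_0) \times \mathbb{R}$, so $\tilde F_L$ is an admissible competitor for $\tilde E_t$. The next step is to apply condition $(ii)$ on the compact set $\tilde B := A \times [-L, L]$, giving $\mathcal{J}^{\tilde B}_{U,\bar\nu}(\tilde E_t) \le \mathcal{J}^{\tilde B}_{U,\bar\nu}(\tilde F_L)$.

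I would then expand both sides using translation invariance. Because $U(x,z) = u(x)$ and $\bar\nu(x,z) = \bar\nu(x,0)$, and because the ambient data $K$ is extended trivially in the $z$-direction (so $K_{iz} = K_{zz} = 0$), a direct expansion yields $(\bar g^{ij} - \bar\nu^i \bar\nu^j) K_{ij} = (g^{ij} - \nu_M^i \nu_M^j) K_{ij}$. Applying Fubini to the bulk integrals and measuring the cylindrical reduced boundaries gives
\begin{align*}
\mathcal{J}^{\tilde B}_{U,\bar\nu}(\tilde E_t) &= 2L \,\mathcal{J}^{A}_{u, \nu_M}(E_t), \\
\mathcal{J}^{\tilde B}_{U,\bar\nu}(\tilde F_L) &= 2L \,\mathcal{J}^{A}_{u, \nu_M}(F) + 2|F \Delta E_t|,
\end{align*}
where the extra $2|F \Delta E_t|$ arises from the two horizontal interface pieces $(F \Delta E_t) \times \{\pm L\}$ appearing in $\partial^* \tilde F_L$ but not in $\partial^* \tilde E_t$. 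Combining with the minimality inequality,
$$2L \bigl( \mathcal{J}^A_{u, \nu_M}(E_t) - \mathcal{J}^A_{u, \nu_M}(F) \bigr) \leq 2|F \Delta E_t| \leq 2|A|.$$
Dividing by $2L$ and letting $L \to \infty$ yields $\mathcal{J}^A_{u, \nu_M}(E_t) \leq \mathcal{J}^A_{u, \nu_M}(F)$. Since $F$ and $A$ were arbitrary, $E_t$ minimises $\mathcal{J}_{u, \nu_M}$ in $M \setminus \bar E_0$ for each $t > 0$, and Lemma \ref{equiv} then delivers (\ref{Mvar}) for $(u, \nu_M)$.

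The main technical point is the bulk-term identity $(\bar g^{ij} - \bar\nu^i \bar\nu^j) K_{ij} = (g^{ij} - \nu_M^i \nu_M^j) K_{ij}$ and the clean Fubini splitting — both of which ultimately rest on the $z$-invariance of $\bar\nu$, $U$, and $K$. Beyond that, the argument is bookkeeping: one must check that the horizontal interface contributions at $z = \pm L$ remain $O(1)$ uniformly in $L$ while the vertical cylinder contributions scale as $O(L)$, so that the nuisance $O(1)$ terms vanish upon dividing by $L$.
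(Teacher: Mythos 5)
Your proof is correct, but it takes a genuinely different route from the paper's. The paper works on the non-parametric side: it first converts Definition \ref{weakdefinition}$(ii)$ to the function-minimisation property (\ref{Mvar}) for $(U,\bar\nu)$ on $(M\backslash\bar E_0)\times\mathbb{R}$ via Lemma \ref{equiv}, then takes a locally Lipschitz competitor $v$ on $M$, forms the interpolant $V(x,z)=\phi(z)v(x)+(1-\phi(z))u(x)$ with a $z$-cutoff $\phi$ supported on $(-1,s+1)$, applies (\ref{Mvar}) in $M\times\mathbb{R}$, and divides by $s\to\infty$; at the end it invokes Lemma \ref{equiv} again on $M$ to recover the parametric statement. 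You instead stay entirely on the parametric side: lift a set competitor $F$ to the truncated cylinder $\tilde F_L=(F\times(-L,L))\cup(E_t\times(\mathbb{R}\setminus[-L,L]))$, apply Definition \ref{weakdefinition}$(ii)$ directly, use Fubini and translation invariance to identify the $O(L)$ cylindrical contributions and the $O(1)$ horizontal caps $(F\Delta E_t)\times\{\pm L\}$, and divide by $2L\to\infty$; Lemma \ref{equiv} is then invoked once at the end. The two arguments are dual: your cylindrical cap $2|F\Delta E_t|$ is the parametric counterpart of the paper's cutoff error term $4\int_A|v-u|$, and both rest on the same bulk-term identity $(\bar g^{ij}-\bar\nu^i\bar\nu^j)K_{ij}=(g^{ij}-\nu_M^i\nu_M^j)K_{ij}$. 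Your version is arguably cleaner in that it avoids a double application of Lemma \ref{equiv} and works directly with the form of the definition, while the paper's version sidesteps having to reason about reduced boundaries of the glued competitor $\tilde F_L$.

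One small point worth making explicit if you write this up: when you compute $|\partial^*\tilde F_L\cap\tilde B|$ with $\tilde B=A\times[-L,L]$, the horizontal caps $(F\Delta E_t)\times\{\pm L\}$ lie exactly on $\partial\tilde B$, and you should verify they are genuinely counted (they are: at $(x,\pm L)$ with $x\in F\Delta E_t$, the blow-up of $\tilde F_L$ is a half-space, so these points lie in $\partial^*\tilde F_L$), or alternatively use $\tilde B'=A\times[-L-1,L+1]$ so all the symmetric difference is interior and the extra cylindrical contributions from $\partial^*E_t\times([-L-1,-L]\cup[L,L+1])$ cancel between the two sides.
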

\begin{proof}
Since the tensor $K$ is extended trivially in the $z$-direction, we find
\begin{equation}
\left(\bar{g}^{ij}-\bar{\nu}^i\bar{\nu}^j\right)K_{ij}=\left(g^{ij}-\nu_M^i\nu_M^j\right)K_{ij},
\end{equation}
where $\nu_M:=\bar{\nu}\big|_{TM}$. Let $B_{u,\nu_M}:=|\nabla u|-\left(g^{ij}-\nu_M^i\nu_M^j\right)K_{ij}$ denote the bulk term of $\mathcal{J}_{u,\nu_M}$.
Let $v$ be a locally Lipschitz function such that $\{v\neq u\}\subset A\subset\subset M\backslash \bar{E}_0$. Let $\phi(z)$ be a cutoff function such that:
\begin{equation*}
|\phi_z|\leq2,\quad\phi=1\text{ on }[0,s]\text{ and }\phi=0\text{ on }\mathbb{R}\backslash(-1,s+1).
\end{equation*}
Then $V(x,z):=\phi(z)v(x)+(1-\phi(z))u(x)$ is an appropriate comparison function for $U$, and letting $\tilde{A}:=A\times[-1,s+1]$, we obtain from (\ref{Mvar})
\belowdisplayskip=0pt
\begin{align*}
\int_{\tilde{A}}|\nabla u|+u\,B_{u,\nu_M}&=\int_{\tilde{A}}|\bar{\nabla}U|+U\left(|\bar{\nabla}U|-\left(\bar{g}^{ij}-\bar{\nu}^i\bar{\nu}^j\right)K_{ij}\right)\\
&\leq\int_{\tilde{A}}|\bar{\nabla}V|+V\left(|\bar{\nabla}U|-\left(\bar{g}^{ij}-\bar{\nu}^i\bar{\nu}^j\right)K_{ij}\right)\\
&\leq\int_{\tilde{A}}\phi\left(|\nabla v|+v\,B_{u,\nu_M}\right)+(1-\phi)\left(|\nabla u|+u\,B_{u,\nu_M}\right)\\
&\,\,\,\,\,\,\,+|\phi_z|||v-u|.
\end{align*}
This implies
\begin{align*}
s\cdot\mathcal{J}^A_{u,\nu_M}(u)&=s\int_A|\nabla u|+u\,B_{u,\nu_M}\quad\quad\quad\quad\textcolor{white}{.}\\
&\leq \int_{\tilde{A}}\phi\left(|\nabla u|+u\,B_{u,\nu_M}\right)\\
&\leq\int_{\tilde{A}}\phi\left(|\nabla v|+v\,B_{u,\nu_M}\right)+|\phi_z|||v-u|\\
&\leq(s+2)\int_A|\nabla v|+v\,B_{u,\nu_M}+\int_{A\times([-1,0]\subset[1,2])}|\phi_z||v-u|\\
&\leq(s+2)\mathcal{J}^A_{u,\nu_M}(v)+4\int_A|v-u|.\\
\end{align*}
Dividing by $s$ and passing $s\to\infty $ proves that the pair $(u,\nu_M)$ satisfies (\ref{Mvar}). Lemma \ref{equiv} then implies that the sets $E_t:=\{u<t\}$ minimise $J_{u,\nu_M}$ for each $t>0$.
\end{proof}
We now state some further properties of weak solutions of $(**)$. We begin by showing that smooth solutions of the flow $(*)$ are weak solutions in the domain they foliate. This follows as in \cite[Lemma 2.3]{HI01}.
\begin{SMOO}\label{SMOO} Let $(\Sigma_t)_{a\leq t< b}$ be a smooth solution of $(*)$ on $M$. Let $U=t$ on $\Sigma_t\times\mathbb{R},\,U<a$ in the region bounded by $\Sigma_a\times\mathbb{R}$, and $\tilde{E}_t:=\{U<t\}$. Then $\tilde{E}_t$ minimises $\mathcal{J}_{U,\bar{\nu}}$ in $\tilde{E}_b\backslash \tilde{E}_a$ for $a\leq t<b$, where $\bar{\nu}$ is the smooth normal to the vertical cylinder $\Sigma_t\times\mathbb{R}$, given by $\bar{\nu}=(\nu_{\Sigma_t},0)=\dfrac{\bar{\nabla}U}{|\bar{\nabla}U|}$.
\end{SMOO}
\begin{proof}
We use the smooth normal $\tilde{\nu}=\dfrac{\bar{\nabla} U}{|\bar{\nabla} U|}$ as a calibration and apply the divergence theorem to relate $\mathcal{J}_{U,\tilde{\nu}}(\tilde{E}_t)$ to $\mathcal{J}_{U,\bar{\nu}}(\tilde{F})$ for a competing set $\tilde{F}$ of finite perimeter with $\tilde{F}\Delta \tilde{E}_t\subset\subset\tilde{\Omega}$. Let $B_{U,\bar{\nu}}:=|\bar{\nabla} U|-(\bar{g}^{ij}-\bar{\nu}^i\bar{\nu}^j)K_{ij}$ denote the bulk energy term in $\mathcal{J}_{U,\bar{\nu}}$.
\begin{align*}
\mathcal{J}_{U,\bar{\nu}}(\tilde{E}_t)=&|\partial \tilde{E}_t|-\int_{\tilde{E}_t}B_{U,\bar{\nu}}\,dx=\int_{\partial \tilde{E}_t}\nu_{\partial {\tilde{E}_t}}\cdot\bar{\nu} d\mathcal{H}^{n-1}-\int_{E_t}B_{U,\bar{\nu}}\,dx\\
         =&\int_{\partial \tilde{E}_t\cap\bar{\tilde{F}}}\nu_{\partial_{\tilde{E}_t}}\cdot\bar{\nu} d\mathcal{H}^{n-1}+\int_{\partial \tilde{E}_t\backslash \tilde{F}}\nu_{\partial {\tilde{E}_t}}\cdot\bar{\nu} d\mathcal{H}^{n-1}-\int_{\tilde{E}_t}B_{U,\bar{\nu}}\,dx\\
=&\int_{\partial^*F\cap\bar{\tilde{E}}_t}\nu_{\partial^*\tilde{F}}\cdot\bar{\nu} d\mathcal{H}^{n-1}+\int_{\tilde{E}_t\backslash \tilde{F}}B_{U,\bar{\nu}}\,dx-\int_{\tilde{E}_t}B_{U,\bar{\nu}}\,dx\\
&+\int_{\partial^*\tilde{F}\backslash \tilde{E}_t}\nu_{\partial^*\tilde{F}}\cdot\bar{\nu} d\mathcal{H}^{n-1}-\int_{\tilde{F}\backslash \tilde{E}_t}B_{U,\bar{\nu}}\,dx\\
=&\int_{\partial^*\tilde{F}}\nu_{\partial^*\tilde{F}}\cdot\bar{\nu} d\mathcal{H}^{n-1}-\int_{\tilde{F}}B_{U,\bar{\nu}}dx\leq|\partial^*\tilde{F}|-\int_{\tilde{F}}B_{U,\bar{\nu}}\,dx=\mathcal{J}_{U,\bar{\nu}}(\tilde{F}).\end{align*}
\end{proof}
\textbf{Weak Mean Curvature.}
In view of the local $C^{1,\alpha}$ estimates given by Regularity Theorem \ref{regthm}, we can consider the weak mean curvature of the surfaces $\tilde{\Sigma}_t=\partial\{U<t\}$. 

Let $\tilde{N}$ be a $C^1$ hypersurface in $M\times\mathbb{R}$. Then a locally integrable function $H$ on $\tilde{N}$ is called the weak mean curvature provided 
\begin{equation}\label{wkHdefn}
\int_{\tilde{N}}\text{div}_{\tilde{N}}Xd\mu=\int_{\tilde{N}}H\nu\cdot Xd\mu,\quad\quad\forall X\in C^{\infty}_c(T(M\times\mathbb{R})).
\end{equation}

\begin{Lemma}
Let $\tilde{E}_t:=\{U<t\}$ minimise $J_{U,\bar{\nu}}$ in $\tilde{A}:=\tilde{E}_b\backslash \tilde{E}_a$, for $U\in C^{0,1}_{\text{loc}}(\tilde{A})$. Then the surfaces $\tilde{\Sigma}_t=\partial \tilde{E}_t$ have weak mean curvature $H$ satisfying $H=|\bar{\nabla} U|-P$ for a.e. $x\in \tilde{\Sigma}_t$ and a.e. $t\in(a,b)$, where $P=(\bar{g}^{ij}-\bar{\nu}^i\bar{\nu}^j)K_{ij}$.
\end{Lemma}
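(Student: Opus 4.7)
The strategy is to derive $H=|\bar\nabla U|-P$ by taking the first variation of $\mathcal{J}_{U,\bar\nu}$ at the minimiser $\tilde{E}_t$ and reading the weak mean curvature off the resulting Euler--Lagrange identity, then invoking the Regularity Theorem to justify the classical interpretation.

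First I would fix $t\in(a,b)$ and a test field $X\in C^\infty_c(T\tilde{A})$ with flow $\phi_s$. Since $\phi_s(\tilde{E}_t)\,\Delta\,\tilde{E}_t$ is precompact in $\tilde{A}$ for small $|s|$, the set $\tilde{F}_s:=\phi_s(\tilde{E}_t)$ is an admissible competitor for $\tilde{E}_t$; testing the minimisation with both $X$ and $-X$ forces
$$\left.\frac{d}{ds}\right|_{s=0}\mathcal{J}_{U,\bar\nu}(\tilde{F}_s)=0.$$
The classical first variation for Caccioppoli sets gives
$$\left.\frac{d}{ds}\right|_{s=0}\bigl|\partial^*\tilde{F}_s\cap\tilde{A}\bigr|=\int_{\partial^*\tilde{E}_t}\mathrm{div}_{\tilde{\Sigma}_t}X\,d\mathcal{H}^{n+1},$$
and writing the bulk integrand as $B_{U,\bar\nu}:=|\bar\nabla U|-(\bar g^{ij}-\bar\nu^i\bar\nu^j)K_{ij}\in L^\infty_{\mathrm{loc}}(\tilde{A})$, a change of variables followed by the divergence theorem for sets of finite perimeter yields
$$\left.\frac{d}{ds}\right|_{s=0}\int_{\tilde{F}_s}B_{U,\bar\nu}\,dx=\int_{\partial^*\tilde{E}_t}B_{U,\bar\nu}\,(X\cdot\bar\nu)\,d\mathcal{H}^{n+1}.$$
Combining these with the vanishing of the total first variation gives
$$\int_{\partial^*\tilde{E}_t}\mathrm{div}_{\tilde{\Sigma}_t}X\,d\mathcal{H}^{n+1}=\int_{\partial^*\tilde{E}_t}\bigl(|\bar\nabla U|-P\bigr)(X\cdot\bar\nu)\,d\mathcal{H}^{n+1},$$
which, compared with definition (\ref{wkHdefn}), exhibits $|\bar\nabla U|-P$ as the weak mean curvature of $\tilde{\Sigma}_t$.

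Next I would invoke Regularity Theorem \ref{regthm}, applied to the obstacle-type functional (\ref{ofnal}) with $f:=-B_{U,\bar\nu}\in L^\infty_{\mathrm{loc}}$, to guarantee that $\tilde{\Sigma}_t$ is a $C^{1,\alpha}$ hypersurface in $\tilde{A}$ away from a closed lower-dimensional singular set, so that the weak mean curvature identity (\ref{wkHdefn}) is applicable. The \emph{a.e.\ $t$} qualifier then absorbs the countable set of jump times of $u$ (those $t$ for which $\{u=t\}$ has positive Lebesgue measure, corresponding to plateaus of the locally Lipschitz function $u$), together with any further null set where the coarea formula fails to supply a well-defined trace of $|\bar\nabla U|$ along $\tilde{\Sigma}_t$.

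The main obstacle will be justifying the first variation of the bulk term, since $B_{U,\bar\nu}$ is merely $L^\infty$ and $\bar\nu$ only $C^{0,\alpha}_{\mathrm{loc}}$, so neither $\mathrm{div}(B_{U,\bar\nu}X)$ nor the Gauss--Green formula applies directly. This I would handle by mollifying both $|\bar\nabla U|$ and $\bar\nu$ to obtain smooth approximants $B^{\rho}\to B_{U,\bar\nu}$, applying the Gauss--Green theorem for Caccioppoli sets to $B^{\rho}X$, and passing $\rho\to0$ via dominated convergence; the $C^{0,\alpha}_{\mathrm{loc}}$ continuity of $\bar\nu$ and the coarea formula for the Lipschitz function $U$ together recover the trace of $B_{U,\bar\nu}$ on $\partial^*\tilde{E}_t$ for a.e.\ $t$, legitimising the boundary term in the limit.
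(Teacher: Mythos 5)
Your proposal takes a genuinely different route from the paper, and the difference matters precisely at the obstacle you identify. You fix $t$ first and perturb the set $\tilde{E}_t$ by the flow $\phi_s$, invoking the first variation of area for Caccioppoli sets and then trying to write the first variation of the bulk term as a boundary integral. The paper instead perturbs the \emph{function}: it tests the non-parametric principle (\ref{Mvar}) with $v=U\circ\Phi_s^{-1}$, computes $\frac{d}{ds}\big|_{s=0}\mathcal{J}_{U,\bar\nu}(U\circ\Phi_s^{-1})$, and only \emph{then} applies the coarea formula. With this ordering the derivative of the bulk term is simply $-\int_{\tilde W}\bar\nabla U\cdot X\,\bigl(|\bar\nabla U|-P\bigr)\,dx$, a Lebesgue integral over the open set that is unambiguously defined since $U$ is Lipschitz, and coarea converts it into the iterated integral $\int dt\int_{\tilde\Sigma_t}(\cdot)\,d\mathcal{H}^{n+1}$. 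The surface-level identity then holds for a.e.\ $t$ by Lebesgue differentiation of the resulting $t$-integral. Crucially, at no point does the paper need a trace of the $L^\infty$ density $|\bar\nabla U|$ on a fixed level set, and the a.e.\ $t$ qualifier emerges as an \emph{output} of the argument rather than a hypothesis one must impose on $t$ at the outset.

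Your proposed step
\begin{equation*}
\left.\tfrac{d}{ds}\right|_{s=0}\int_{\tilde F_s}B_{U,\bar\nu}\,dx=\int_{\partial^*\tilde E_t}B_{U,\bar\nu}\,(X\cdot\bar\nu)\,d\mathcal{H}^{n+1}
\end{equation*}
is not meaningful at an arbitrary fixed $t$, because $B_{U,\bar\nu}=|\bar\nabla U|-P$ is only $L^\infty$, while $\partial^*\tilde E_t$ has Lebesgue measure zero. You correctly identify this, but the fix as sketched is not tight. After replacing $B_{U,\bar\nu}$ by a mollification $B^\rho$ and applying Gauss--Green, one must pass $\rho\to0$ on \emph{both} sides, and the two limits are of different character: on the left one must interchange $\lim_{\rho\to0}$ with $\frac{d}{ds}\big|_{s=0}$ (which dominated convergence alone does not justify, since $\nabla B^\rho$ blows up); on the right one must identify $\lim_{\rho\to 0}B^\rho\big|_{\partial^*\tilde E_t}$ with the ``correct'' trace, which holds only for a.e.\ $t$ and requires exactly the coarea--Lebesgue-point argument the paper uses. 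When carried out in full, your mollification route reproduces the paper's coarea computation in a more roundabout way, so it buys nothing. If you want a proof along your lines, the cleaner move is to recognise from the start that the first variation of the bulk is a distributional pairing $\langle X\cdot\bar\nabla U, B_{U,\bar\nu}\rangle$ and then slice with coarea — which is the paper's argument stated in parametric language. The invocation of the Regularity Theorem at the end to legitimise the weak mean curvature identity (\ref{wkHdefn}) on the $C^{1,\alpha}$ pieces is correct and matches the paper.
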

\begin{proof}
Let $X$ be a compactly supported vector field defined on $M$, and $(\Phi_s)_{-\varepsilon<s<\varepsilon}$ the flow of diffeomorphisms generated by $X$ with $\Phi_0= id_M$. For minimisers of $\mathcal{J}_{U,\nu}$, we use the area formula, the dominated convergence theorem and the co-area formula in the form 
\begin{equation*}
\int_{\mathbb{R}^{n+2}} |\bar{\nabla}f|dx=\int_{-\infty}^{\infty}\int_{\{f=t\}}dt
\end{equation*}
to obtain
\begin{align*}
0&=\dfrac{d}{ds}\bigg|_{s=0}\mathcal{J}_{U,\bar{\nu}}(U\circ\Phi_s^{-1})\\
  &=\dfrac{d}{ds}\bigg|_{s=0}\left(\int_{\tilde{W}}|\nabla (U\circ\Phi_s^{-1})|+(U\circ\Phi_s^{-1})\left(|\bar{\nabla} U|-(\bar{g}^{ij}-\bar{\nu}^i\bar{\nu}^j)K_{ij}\right)dx\right)\\
  &=\dfrac{d}{ds}\bigg|_{s=0}\left(\int_{-\infty}^{\infty}\int_{\tilde{\Sigma}_t\cap \tilde{W}}|\det d\Phi_s(x)|d\mathcal{H}^{n}(x)dt\right)\\
  &\quad-\int_{\tilde{W}}\bar{\nabla} U\cdot X\left(|\bar{\nabla} U|-(\bar{g}^{ij}-\bar{\nu}^i\bar{\nu}^j)K_{ij}\right)dx\\
  &=\int_{-\infty}^{\infty}\int_{\tilde{\Sigma}_t\cap \tilde{W}}\text{div}_{\tilde{\Sigma}_t}Xd\mathcal{H}^ndt-\int_{\tilde{W}}\bar{\nu}\cdot X|\bar{\nabla} U|\left(|\bar{\nabla} U|-(\bar{g}^{ij}-\bar{\nu}^i\bar{\nu}^j)K_{ij}\right)dx,
\end{align*}
since $\bar{\nu}=\dfrac{\bar{\nabla} U}{|\bar{\nabla} U|}$ when $\bar{\nabla} U\neq0$, and $\bar{\nu}|\bar{\nabla} U|=0$ when $\bar{\nabla} U=0$. Then by the co-area formula, we obtain
\begin{align*}
0=\int_{-\infty}^{\infty}\int_{\tilde{\Sigma}_t\cap \tilde{W}}\left(\text{div}_{\tilde{\Sigma}_t}X+(P-|\bar{\nabla} U|)\bar{\nu}\right)\cdot X d\mathcal{H}^{n+1}dt.
\end{align*}
Lebesgue differentiation and comparison with (\ref{wkHdefn}) yields the result.
\end{proof}
Exactly as in the proof of \cite[Theorem 2.1]{HI01}, we also obtain the following compactness theorem for the time-of-arrival function.
\begin{COMP}\label{COMP} Let $U_i\in C^{0,1}_{\text{loc}}(\tilde{\Omega}_i)$ and $\bar{\nu}_i\in C^{0,\alpha}_{\text{loc}}(T\tilde{\Omega}_i)$ be a sequence of solutions of (\ref{Mvar}) on open sets $\tilde{\Omega}_i\subset M\times\mathbb{R}$, such that
\begin{equation}\label{conditions}
\tilde{\Omega}_i\to\tilde{\Omega},\quad U_i\to U,\quad\bar{\nu}_i\to\bar{\nu},
\end{equation}
locally uniformly, and such that for each $\tilde{A}\subset\subset\tilde{\Omega}$, $\sup_{\tilde{A}}|\bar{\nabla} U_i|\leq C(\tilde{A}), $
for large $i$, where $C(\tilde{A})$ is independent of $i$. Then $(U,\bar{\nu})$ solves (\ref{Mvar}) on $\tilde{\Omega}$. In the special case where $(U_i,\bar{\nu}_i)$ is a sequence of weak solutions of $(**)$ satisfying Definition \ref{weakdefinition}, then the limit $(U,\bar{\nu})$ is a weak solution of $(**)$.
\end{COMP}
\section{Geometric characterisation of jump regions}\label{jjump}
In this section we introduce the concept of outward optimisation in order to give a geometric characterisation of the criterion selecting jump times. 
Since weak solutions $(U(x,z)=u(x),\bar{\nu})$ of $(**)$ are translation invariant and the level sets of $U$ are vertical cylinders, this characterisation follows from the parametric variational formulation (\ref{varprincip2}) for $(u,\nu_M:=\bar{\nu}\vert_{TM})$. 

Let $\Omega$ be an open set in $M$. Then we call a set $E$ \textit{outward optimising $($in $\Omega)$ with respect to $\nu$}, if $E$ minimises `area plus bulk energy $P$' on the outside in $\Omega$. That is, if 
\begin{equation}\label{mini}
|\partial^*E\cap A|\leq |\partial^*F\cap A| +\int_{F\backslash E}  (g^{ij}-\nu^i\nu^j)K_{ij},
\end{equation}
for any $F$ containing $E$ such that $F\backslash E\subset\subset \Omega$, and any compact set $A$ containing $F\backslash E$. Here $\nu$ is a measurable vector field on $F\backslash E$. The set $E$ is then called \textit{strictly outward optimising $($in $\Omega)$} if equality in (\ref{mini}) implies that $F\cap\Omega=E\cap\Omega$ a.e.\\

We use this concept to define the strictly outward optimising hull of a measurable set $E\subset\Omega$. Specifically, we define $E'=E'_{\Omega}$ to be the intersection of the Lebesgue points of all the strictly outward optimising sets in $\Omega$ that contain $E$. We call $E'$ the \textit{strictly outward optimising hull of $E$ $($in $\Omega)$}. Up to a set of measure zero, $E'$ may be realised by a countable intersection, so $E'$ is strictly outward optimising, and open.\\ 

We then obtain the following interpretation of the variational formulation. 
\begin{OUT}\label{outward1}
Suppose that $(U(x,z):=u(x),\bar{\nu})$ is a weak solution of $(**)$ with initial condition $E_0$, and that $M$ has no compact components. Then:
\begin{enumerate}
 \item[$(i)$] For $t>0$, $E_t$ is outward optimising in $M$ with respect to $\nu_M:=\bar{\nu}\big|_{TM}$.
  \item[$(ii)$]  For $t\geq0$, $E_t^+$ is strictly outward optimising in $M$ with respect to $\nu_M$.
   \item[$(iii)$] For $t\geq0$, $E_t'=E_t^+$, provided $E_t^+$ is precompact.
   \item[${(iv)}$] For $t>0$, $|\partial E_t|=|\partial E_t^+|+\int_{E_t^+\backslash E_t}(g^{ij}-\nu_M^i\nu_M^j)K_{ij}$, provided that $E_t^+$ is precompact. This extends to $t=0$ precisely if $E_0$ is outward optimising.
  \end{enumerate}
Furthermore, for general $(U,\bar{\nu})$ satisfying (\ref{initial}) in $M\times\mathbb{R}$, the analogous statements hold on compact sets $\tilde{\Omega}\subset M\times\mathbb{R}$ with 
\begin{equation}\label{mini2}
|\partial^*\tilde{E}\cap \tilde{A}|\leq |\partial^*\tilde{F}\cap \tilde{A}| +\int_{\tilde{F}\backslash \tilde{E}}  (\bar{g}^{ij}-\bar{\nu}^i\bar{\nu}^j)K_{ij},
\end{equation}
replacing (\ref{mini}) in the definition of outward optimising.  
\end{OUT}
To prove Outward Optimising Property \ref{outward1}, we will need the following Lemma.
\begin{Lemma}\label{Conn} Let $(U,\bar{\nu})$ satisfy (\ref{Mvar}) on $\tilde{\Omega}$. Then $U$ has no strict local maxima or minima on $\tilde{\Omega}$.\end{Lemma}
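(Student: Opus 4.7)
The plan is to argue by contradiction via a small-cap comparison combined with the isoperimetric inequality, mirroring the strategy for the analogous statement in the theory of weak inverse mean curvature flow. Suppose $U$ attains a strict local maximum at a point $X_0 \in \tilde{\Omega}$ with value $t_0 = U(X_0)$. By strict maximality and continuity of $U$, I can fix a ball $B_r(X_0) \subset\subset \tilde{\Omega}$ (with respect to the product metric $\bar{g}$) such that $U < t_0$ on $\overline{B_r(X_0)} \setminus \{X_0\}$, and set $\mu := \max_{\partial B_r(X_0)} U < t_0$. For every $t \in (\mu, t_0)$, the cap $A_t := \{U \geq t\} \cap B_r(X_0)$ is then a Caccioppoli set compactly contained in $B_r(X_0)$, contains $X_0$, and satisfies $|A_t| \to 0$ as $t \nearrow t_0$.

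By Lemma \ref{equiv} applied in $M \times \mathbb{R}$, the set $\tilde{E}_t := \{U < t\}$ minimises $\mathcal{J}^{B_r(X_0)}_{U,\bar{\nu}}$ among Caccioppoli competitors that differ from $\tilde{E}_t$ on a compact subset of $B_r(X_0)$. I would test against $F := \tilde{E}_t \cup B_r(X_0)$. Since $\partial B_r(X_0) \subset \tilde{E}_t$ by the choice $t > \mu$, the sets $F$ and $\tilde{E}_t$ agree on an open neighbourhood of $\partial B_r(X_0)$, while $F$ exhausts the open ball, so $\partial^* F \cap B_r(X_0) = \emptyset$. The minimisation inequality together with the local bounds $|\bar{\nabla} U|, |K| \leq C$ on $B_r(X_0)$ yields
\begin{equation*}
|\partial^* \tilde{E}_t \cap B_r(X_0)| \;\leq\; \int_{A_t} \bigl(|\bar{\nabla} U| - (\bar{g}^{ij} - \bar{\nu}^i\bar{\nu}^j) K_{ij}\bigr) \;\leq\; C\,|A_t|.
\end{equation*}
Since $A_t \subset\subset B_r(X_0)$, the reduced boundary $\partial^* A_t$ lies entirely in $B_r(X_0)$ and coincides there with $\partial^* \tilde{E}_t$. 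The local isoperimetric inequality on the Riemannian manifold $M \times \mathbb{R}$ of dimension $m = n+2$ gives $c\,|A_t|^{(m-1)/m} \leq |\partial^* A_t|$, which combined with the previous bound forces $|A_t|^{1/m} \geq c/C > 0$, contradicting $|A_t| \to 0$.

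The case of a strict local minimum is handled by the mirror argument: at a strict minimum at $X_0$ with value $t_0$, I would choose $t$ slightly larger than $t_0$ so that $A_t := \{U < t\} \cap B_r(X_0)$ is a small Caccioppoli set compactly contained in $B_r(X_0)$ and containing $X_0$, and compare against $F := \tilde{E}_t \setminus B_r(X_0)$. Here the analogous range $t \in (t_0, \min_{\partial B_r(X_0)} U)$ supplied by strict minimality ensures $\partial B_r(X_0) \cap \tilde{E}_t = \emptyset$, so $F$ again agrees with $\tilde{E}_t$ near $\partial B_r(X_0)$ and $\partial^* F \cap B_r(X_0) = \emptyset$. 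The same chain of inequalities gives $|\partial^* \tilde{E}_t \cap B_r(X_0)| \leq C |A_t|$, and the isoperimetric inequality closes the contradiction as $t \searrow t_0$.

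The only real technical point is securing the ``detachment window'' of admissible $t$: this is exactly where strict extremality is used, via $\mu < t_0$ (respectively $\mu' > t_0$), guaranteeing that $\partial B_r(X_0)$ is separated from the cap $A_t$ and hence that the comparison set $F$ agrees with $\tilde{E}_t$ in a neighbourhood of $\partial B_r(X_0)$ so no extraneous boundary is created. Once this window is in place, everything else — the reduction to level-set minimisation via Lemma \ref{equiv}, the local boundedness of $|\bar{\nabla}U|$ from $U \in C^{0,1}_{\text{loc}}$, and the local isoperimetric inequality for small Caccioppoli sets on a smooth Riemannian manifold — is routine.
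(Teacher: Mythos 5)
Your proof is correct and reaches the contradiction by a genuinely different, though closely parallel, route. The paper works directly with the non-parametric minimisation principle (\ref{Mvar}): it produces, on a precompact component $\tilde{E}$ of a super-level set, the truncated competitor $V_k$ (equal to $k$ on $\{U>k\}\cap\tilde{E}$, equal to $U$ elsewhere), and extracts $1\leq C_0|\hat{E}_k|^{1/n}$ from (\ref{Mvar}) via H\"older's inequality followed by the Sobolev inequality. You instead pass to the parametric formulation through Lemma \ref{equiv}, compare $\tilde{E}_t=\{U<t\}$ against the filled-in set $F=\tilde{E}_t\cup B_r(X_0)$, and close with the local isoperimetric inequality. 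Since the isoperimetric and Sobolev inequalities are the level-set and integrated forms of one another, the two arguments are dual; yours has the advantage of making the geometric mechanism (a tiny cap cannot be a perimeter minimiser against a bulk term with an $L^\infty$ density) especially transparent, and of treating the strict-maximum and strict-minimum cases symmetrically, whereas the paper's minimum case needs the additional restriction $1+U-k\geq\tfrac{1}{2}$ to make the Sobolev step work. One small sign slip: from the minimisation one actually obtains $|\partial^*\tilde{E}_t\cap B_r|\leq -\int_{A_t}\bigl(|\bar{\nabla}U|-(\bar{g}^{ij}-\bar{\nu}^i\bar{\nu}^j)K_{ij}\bigr)=\int_{A_t}\bigl((\bar{g}^{ij}-\bar{\nu}^i\bar{\nu}^j)K_{ij}-|\bar{\nabla}U|\bigr)$, not $\int_{A_t}\bigl(|\bar{\nabla}U|-(\bar{g}^{ij}-\bar{\nu}^i\bar{\nu}^j)K_{ij}\bigr)$ as you wrote; since both quantities are bounded by $C|A_t|$ in absolute value on $B_r(X_0)$, your next inequality and the final contradiction are unaffected, but the intermediate step as displayed does not follow from the comparison. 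You also implicitly use that $\partial^*A_t=\partial^*\tilde{E}_t\cap B_r$ up to $\mathcal{H}^{n+1}$-measure zero, which is justified precisely because $A_t\subset\subset B_r$ forces $\partial^*A_t$ away from $\partial B_r$; this is the same role played by the detachment window you correctly highlight.
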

\begin{proof}
First assume that $U$ possesses a strict local maximum so that there is a connected, precompact component $\tilde{E}$ of $\{U>t\}$ for some $t$. Define the Lipschitz function $V_k$ by
\begin{equation}\label{v_k}
V_k:=\begin{cases}\begin{aligned}&k\quad\text{on }\hat{E}_k:=\tilde{E}_k\cap \tilde{E},\\
                                                              &U\quad\text{on }\tilde{\Omega}\backslash \hat{E}_k,
                                                                                         \end{aligned}\end{cases}
\end{equation}                                                                                      
for $0<k<\sup\limits_{\tilde{E}} U$ and $\tilde{E}_k:=\{U>k\}.$ Then (\ref{Mvar}) and H\"older's inequality yield
\small\begin{align}
\label{11}\int_{\hat{E}_k}|\bar{\nabla} U|(1+U-k)&\leq \int_{\hat{E}_k}(U-k)C_0\leq C_0\left(\int_{\hat{E}_k}(U-k)^{\frac{n}{n-1}}\right)^{\frac{n-1}{n}}|\hat{E}_k|^{\frac{1}{n}},
\end{align}\normalsize
where $C_0=(n+1)|\lambda|$ and $|\lambda|$ is the size of the largest eigenvalue of $K$ on $\tilde{\Omega}$. Then using the Sobolev inequality on the left hand side we obtain
\small\begin{equation}\label{22}
\int_{\hat{E}_k}|\bar{\nabla} U|(1+U-k)\geq\int_{\hat{E}_k}|\bar{\nabla} U|= \int_{\hat{E}_k}|\bar{\nabla} (U-k)|\geq\left(\int_{\hat{E}_k}(U-k)^{\frac{n}{n-1}}\right)^{\frac{n-1}{n}}.
\end{equation}\normalsize                                                     
Combining (\ref{11}) and (\ref{22}) we find $1\leq C_0|\hat{E}_k|^{1/n}$, which leads to a contradiction since $|\hat{E}_k|$ can be made arbitrarily small by choosing $k$ close to $\sup\limits_{\tilde{E}}U$.

Now assume that $U$ possesses a strict local minimum and let $\tilde{E}$ be a connected, precompact component of $\{U<t\}$ for some $t$, and again consider the function $V_k$ defined by (\ref{v_k}), where this time $k>\inf\limits_{\tilde{E}} U$ and $\tilde{E}_k:=\{U<k\}.$ Then as above, (\ref{Mvar}) and H\"older's inequality yield
\begin{align}\label{33}
\int_{\tilde{E}_k}|\bar{\nabla} U|(1+U-k)&\leq C_0\left(\int_{\tilde{E}_k}(U-k)^{\frac{n}{n-1}}\right)^{\frac{n-1}{n}}|\tilde{E}_k|^{\frac{1}{n}},
\end{align}
and by restricting to $k$ small enough that $1+U-k\geq\frac{1}{2}$ on $\tilde{E_k}$, we obtain
\begin{equation}\label{44}
\int_{\tilde{E}_k}|\bar{\nabla} U|(1+U-k)\geq\frac{1}{2}\left(\int_{\tilde{E}_k}(U-k)^{\frac{n}{n-1}}\right)^{\frac{n-1}{n}}.
\end{equation}                                                     
Combining (\ref{33}) and (\ref{44}) we find $1/2\leq C_0|\tilde{E}_k|^{1/n}$, which leads to a contradiction since $|\tilde{E}_k|$ can be made arbitrarily small by choosing $k$ close to $\inf\limits_EU$.\\
In the case where $(U(x,z):=u(x),\bar{\nu})$ is a weak solution of $(**)$, repeating the above calculation for $u$ on $M$, using (\ref{reduction}), yields the desired result.\end{proof}

\begin{outwardproof}
Refer to \cite[Minimising Hull Property 1.4]{HI01}.\\
(\textit{i}) This follows immediately from Lemma \ref{reduction}.\\
(\textit{ii}) From (\ref{1.9}) we obtain for suitable $A$
\begin{equation}\label{1.17}
|\partial^*E_t^+\cap A|\leq|\partial^*F\cap A|+\int\limits_{F\backslash E_t^+}(g^{ij}-\nu_M^i\nu_M^j)K_{ij}-|\nabla u|dx,
\end{equation}
for any $t\geq0$, any $F$ with $F\Delta E_t^+\subset\subset M\backslash E_t^+$, proving that $E_t^+$ is outward optimising.\\
To prove strictly minimising, suppose $F$ contains $E_t^+$ and 
\begin{equation*}
|\partial E_t^+\cap A|-|\partial^*F\cap A|=\int_{F\backslash E_t^+}(g^{ij}-\nu_M^i\nu_M^j)K_{ij}.
\end{equation*}
Then by (\ref{1.17}), $\nabla u=0$ a.e. on $F\backslash E_t^+$. Since $F$ is also outward optimising, and the Lebesgue points of an outward optimising set form an open set in $M$, by a measure zero modification we may assume $F$ is open. Then $u$ is constant on each connected component of the open set $F\backslash\{u\leq t\}$. Since $M$ has no compact components, Lemma \ref{Conn} (i) means that no connected component of $F$ can have closure disjoint from $\bar{E}_t^+$, therefore $u=t$ on $F\backslash E_t$ and $F\subseteq E_t^+$. This proves that $E_t^+$ is strictly outward optimising.\\
\noindent(\textit{iii}) It is clear from part (\textit{ii}) and the definition of $E_t'$ that $E_t^{\prime}\subseteq E_t^+$. Assume $E_t^+$ is precompact. Then if 
\begin{equation*}
|E_t'\cap A|=|E_t^+\cap A|+\int_{E_t^+\backslash E_t'}(g^{ij}-\nu_M^i\nu_M^j)K_{ij},
\end{equation*} 
strict outward optimisation implies that $E_t^{\prime}=E_t^+$. Otherwise 
\begin{equation*}|\partial E_t^{\prime}\cap A|<|\partial E_t^+\cap A|+\int_{E_t^+\backslash E_t'}(g^{ij}-\nu_M^i\nu_M^j)K_{ij}, 
\end{equation*}
contradicting (\ref{1.17}).\\
\noindent(\textit{iv}) In view of (\textit{i}), we can use $E_t^+$ as a competitor to obtain
\begin{equation}\label{ppp}
|\partial E_t\cap A|\leq|\partial E_t^+\cap A|+\int\limits_{E_t^+\backslash E_t}(g^{ij}-\nu_M^i\nu_M^j)K_{ij}dx,
\end{equation}
for $t>0$, and for $t=0$ if $E_0$ happens to be outward optimising itself.
Since $E_t^+$ is precompact, strict inequality in (\ref{ppp}) would contradict $(\textit{iii})$, implying equality in (\ref{ppp}), which proves $(\textit{iv})$.\\
The proof for general $(U,\bar{\nu})$ satisfying (\ref{initial}) in $\tilde{\Omega}\subset M\times\mathbb{R}$ follows exactly as above. \end{outwardproof}
Outward Optimising Lemma \ref{outward1} implies that $\partial E_t$ satisfies the obstacle problem minimising ``area plus bulk energy $P$", with $E_t$ as the obstacle. This leads to a heuristic interpretation of the minimisation principle (\ref{Mvar}). Namely, as long as $E_t$ remains strictly outward optimising, it evolves by inverse null mean curvature, and when this condition is violated, $E_t$ jumps to $E_t'$ and continues. 
This implies that the null mean curvature is nonnegative on the weak solution after time zero. Furthermore, part $(\textit{iv})$ of Lemma \ref{outward1} implies that the monotonicity property
\begin{equation}
\dfrac{d}{dt}|\Sigma_t|+\int\limits_{E_t\backslash E_0}P=|\Sigma_t|
\end{equation}
derived in Lemma \ref{monoton}, also holds in the weak setting, as long as $\Sigma_t$ remains compact.\\

The outward optimising property also implies a stronger result for the surfaces foliating the jump region in Proposition \ref{jump}, namely we see that each $\tilde{\Sigma}_{X_0}$ is a smooth MOTS in $\tilde{\mathcal{K}}_{t_0}$.
\begin{Proposition}\label{MOTS1}
Each surface $\tilde{\Sigma}_{X_0}$ from Proposition \ref{jumpprop} in the foliation of the interior $\tilde{\mathcal{K}}_{t_0}$ of the jump region in $M\times\mathbb{R}$ is a smooth MOTS. \end{Proposition}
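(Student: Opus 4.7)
The plan is to first obtain the MOTS equation $H+P=0$ weakly via first variation of $\mathcal{J}_{U,\tilde{\nu}}$ at $\tilde{E}_{X_0}$ inside the jump region, and then to upgrade from the $C^{1,\alpha}$ regularity supplied by Proposition \ref{jumpprop} to smoothness via standard elliptic PDE theory.

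For the first step, observe that $U\equiv t_0$ throughout $\tilde{\mathcal{K}}_{t_0}$, so $\bar{\nabla} U\equiv 0$ there, and the bulk integrand of $\mathcal{J}_{U,\tilde{\nu}}$ reduces to $-P=-(\bar{g}^{ij}-\tilde{\nu}^i\tilde{\nu}^j)K_{ij}$. By Proposition \ref{jumpprop}, the Caccioppoli set $\tilde{E}_{X_0}$ bounded by $\tilde{\Sigma}_{X_0}$ minimises $\mathcal{J}_{U,\tilde{\nu}}$ in $\tilde{\mathcal{K}}_{t_0}$. Varying $\tilde{E}_{X_0}$ by the flow $\Phi_s$ of a compactly supported $C^\infty$ vector field $X$ on $\tilde{\mathcal{K}}_{t_0}$, and running the same first-variation calculation as in the proof of the weak mean curvature lemma --- specialised to $|\bar{\nabla} U|\equiv 0$ in the bulk --- yields
\begin{equation*}
0=\int_{\tilde{\Sigma}_{X_0}} \text{div}_{\tilde{\Sigma}_{X_0}} X\, d\mathcal{H}^n + \int_{\tilde{\Sigma}_{X_0}} P\,\langle X,\tilde{\nu}\rangle\, d\mathcal{H}^n.
\end{equation*}
Comparison with (\ref{wkHdefn}) then gives the weak MOTS equation $H=-P$ on $\tilde{\Sigma}_{X_0}$, i.e.\ $\theta^+=0$.

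For the second step, I would exploit the dichotomy in Proposition \ref{jumpprop}. If $\tilde{\Sigma}_{X_0}$ is locally a graph $z=w(x)$ over an open subset of $\{u=t_0\}\subset M$, then expanding $H+P=0$ for $\text{graph}(w)$ in $(M\times\mathbb{R},\bar{g})$, using that $K$ is extended to be parallel in the $z$-direction, recovers exactly Jang's equation (\ref{Jang}) for $w$. This is a quasilinear elliptic equation of divergence form with smooth ambient data, and since $w\in C^{1,\alpha}$ renders the coefficients $C^{0,\alpha}$, standard Schauder theory gives $w\in C^{2,\alpha}$; a routine bootstrap then yields $w\in C^\infty$. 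If instead $\tilde{\Sigma}_{X_0}$ is a vertical cylinder $\Sigma\times\mathbb{R}$, translation invariance together with the parallel extension of $K$ reduces $H+P=0$ to the MOTS equation for the cross-section $\Sigma$ in $(M,g,K)$. The same bootstrap yields smoothness of $\Sigma$, and hence of $\tilde{\Sigma}_{X_0}$.

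The principal technical point requiring care is that the vector field $\tilde{\nu}$ entering the functional is a priori only of class $C^{0,\alpha}$, so one must check that the weak MOTS identity can in fact be read as a quasilinear elliptic PDE for the graph function $w$ (resp.\ the cross-section $\Sigma$). This is handled by Definition \ref{weakdefinition}$(iii)$, which forces $\tilde{\nu}$ to coincide on $\tilde{\Sigma}_{X_0}$ with the true outward unit normal of $\partial\tilde{E}_{X_0}$, so that $P$ becomes an explicit quasilinear function of $(w,\nabla w)$ and the Schauder bootstrap can proceed in the usual way.
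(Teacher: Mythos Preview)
Your argument is correct and follows the same overall strategy as the paper: derive the weak MOTS equation from the minimisation property in Proposition~\ref{jumpprop}, then bootstrap from $C^{1,\alpha}$ to smoothness via Schauder theory. The presentation differs in two respects. First, you obtain $|\bar{\nabla}U|\equiv 0$ on $\tilde{\mathcal{K}}_{t_0}$ by the trivial observation that $U$ is constant there, whereas the paper routes through Lemma~\ref{conv} (convergence $|\bar{\nabla}U_i|\to 0$ for the approximating sequence); your shortcut is legitimate and more direct. Second, the paper passes to a non-parametric minimisation problem for the graph function $w$ over the tangent plane via the subgraph identity~(\ref{giusti}) and reads off the MOTS equation as the Euler--Lagrange equation of the functional $\mathring{J}$, while you compute the parametric first variation of $\mathcal{J}_{U,\tilde{\nu}}$ and then use the graph/cylinder dichotomy to interpret the resulting weak identity as a quasilinear PDE. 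Both arrive at the same elliptic equation with $C^{0,\alpha}$ coefficients, and the concluding Schauder step is identical. Your final paragraph correctly identifies and resolves the only genuine subtlety, namely that $\tilde{\nu}$ restricted to $\tilde{\Sigma}_{X_0}$ is the actual unit normal, so that $P$ is an explicit function of $\nabla w$.
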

To prove Proposition \ref{MOTS1}, we require the following Lemma.
\begin{Lemma}\label{conv}\begin{equation}\label{gradU}
|\bar{\nabla}U_i|\to 0\quad\text{in  }L^1_{\text{loc}}(\tilde{\mathcal{K}}_{t_0}).
\end{equation}
\end{Lemma}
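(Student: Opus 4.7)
\textbf{Proof proposal for Lemma \ref{conv}.} The plan is to exploit the variational minimality of $U_i$ (via the Smooth Flow Lemma) against a cutoff comparison function that collapses $U_i$ toward the constant value $t_0$ that $U$ takes on $\tilde{\mathcal{K}}_{t_0}$. The key inputs are: (a) the locally uniform Lipschitz estimate for $u_i$ from Lemma \ref{apriori}, which gives $|\bar{\nabla} U_i|\le C_1$ on compact subsets of $(M\setminus\bar{E}_0)\times\mathbb{R}$, since $|\bar{\nabla}U_i|^2=|\nabla u_i|^2+\varepsilon_i^2$; (b) the locally uniform convergence $U_i\to U\equiv t_0$ on $\tilde{\mathcal{K}}_{t_0}$; and (c) the fact that, by Smooth Flow Lemma \ref{SMOO} together with Lemma \ref{equiv}, the smooth regularised functions $U_i$ satisfy the variational principle (\ref{Mvar}) in their smooth domain.

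Fix a compact set $\tilde{K}_0\subset\tilde{\mathcal{K}}_{t_0}$ and sandwich it between compacts $\tilde{K}_0\subset\subset\tilde{K}_1\subset\subset\tilde{\mathcal{K}}_{t_0}$. Choose a cutoff $\tilde\eta\in C^\infty_c(\tilde{K}_1)$ with $0\le\tilde\eta\le1$ and $\tilde\eta\equiv1$ on $\tilde{K}_0$. Define the admissible competitor
\begin{equation*}
V:=(1-\tilde\eta)U_i+\tilde\eta\,t_0,
\end{equation*}
which is locally Lipschitz and satisfies $\{V\ne U_i\}\subset\tilde{K}_1\subset\subset(M\setminus\bar E_0)\times\mathbb{R}$. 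A direct computation gives $\bar{\nabla}V=(1-\tilde\eta)\bar{\nabla}U_i+(t_0-U_i)\bar{\nabla}\tilde\eta$, hence
\begin{equation*}
|\bar{\nabla}V|\le(1-\tilde\eta)|\bar{\nabla}U_i|+|\bar{\nabla}\tilde\eta|\,|t_0-U_i|,\qquad V-U_i=\tilde\eta(t_0-U_i).
\end{equation*}

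Inserting these into the variational inequality $\mathcal{J}^{\tilde{K}_1}_{U_i,\bar\nu_i}(U_i)\le \mathcal{J}^{\tilde{K}_1}_{U_i,\bar\nu_i}(V)$ and denoting the bulk term by $B_i:=|\bar{\nabla}U_i|-(\bar g^{ij}-\bar\nu_i^i\bar\nu_i^j)K_{ij}$, the $(1-\tilde\eta)|\bar{\nabla}U_i|$ terms cancel and we obtain
\begin{equation*}
\int_{\tilde{K}_1}\tilde\eta\,|\bar{\nabla}U_i|\,dx\le\int_{\tilde{K}_1}|\bar{\nabla}\tilde\eta|\,|t_0-U_i|\,dx+\int_{\tilde{K}_1}\tilde\eta\,|t_0-U_i|\,|B_i|\,dx.
\end{equation*}
Set $\delta_i:=\sup_{\tilde{K}_1}|t_0-U_i|$, which tends to $0$ by locally uniform convergence. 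Since $|B_i|\le|\bar{\nabla}U_i|+C\|K\|_{C^0}\le C_1+C\|K\|_{C^0}=:C_2$ on $\tilde{K}_1$ by input (a), the right-hand side is bounded by $\delta_i\bigl(\|\bar{\nabla}\tilde\eta\|_\infty+C_2\bigr)|\tilde{K}_1|$. Therefore
\begin{equation*}
\int_{\tilde{K}_0}|\bar{\nabla}U_i|\,dx\le\int_{\tilde{K}_1}\tilde\eta\,|\bar{\nabla}U_i|\,dx\le C\,\delta_i\longrightarrow0,
\end{equation*}
which is the claimed $L^1_{\text{loc}}$ convergence.

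The only genuine point to check is that (\ref{Mvar}) is actually available for $U_i$ on a domain containing $\tilde{K}_1$; this is immediate because for $i$ large the level values of $U_i$ on $\tilde{K}_1$ lie in $(t_0-1,t_0+1)$, so $\tilde{K}_1\subset\tilde{E}^i_{t_0+1}\setminus\tilde{E}^i_{t_0-1}$, a domain in which Smooth Flow Lemma \ref{SMOO} applies to the smooth regularised graph. The main (and essentially only) technical point is ensuring that the bulk term $B_i$ does not blow up — a danger that is averted purely by the a priori Lipschitz bound of Lemma \ref{apriori}, so no delicate absorption argument is required.
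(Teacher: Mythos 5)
Your proposal is correct, and it takes a genuinely different route from the paper. The paper's proof is a parabolic estimate: it differentiates $\int_{\tilde{\Sigma}^{\varepsilon}_t}\phi\,(H+P)^2$ in time using the curvature evolution equation (\ref{NMCevolution}), combines this with the area bound from the Outward Optimising Property \ref{outward1} and the interior estimate of Lemma \ref{interior}, integrates in $t$ and applies Fatou's lemma to control $\int|D(H+P)|^2$ along the approximating level sets near a target point $X_0$, and then uses Rellich's theorem to obtain $L^2$ convergence of $(H+P)_{\tilde{\Sigma}^i_{t_i}}=|\bar{\nabla}U_i|$; a final two-case analysis (graph versus vertical cylinder) pins the limit to zero. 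Your argument bypasses all of this with an elliptic variational comparison: the competitor $V=(1-\tilde{\eta})U_i+\tilde{\eta}\,t_0$ collapses $U_i$ toward its uniform limit over the cutoff region, and feeding this into (\ref{Mvar}) --- available for $U_i$ by the calibration argument of Lemma \ref{regcorollary} together with Lemma \ref{equiv} --- combined with the uniform local Lipschitz bound of Lemma \ref{apriori} gives $\int_{\tilde{K}_0}|\bar{\nabla}U_i|\le C\sup_{\tilde{K}_1}|t_0-U_i|\to0$ directly. Your route is shorter and more elementary, and you correctly flag the two nontrivial checks: that $\tilde{K}_1\subset\tilde{E}^i_{t_0+1}\setminus\tilde{E}^i_{t_0-1}$ for large $i$ so the minimality principle applies on a domain containing the support of $\tilde{\eta}$, and that the bulk term $B_i$ is uniformly bounded via (\ref{lipschitz}). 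The paper's approach produces extra integrated curvature information along the way (local $L^2$ control on $D(H+P)$ and weak convergence of the mean curvature operator), which is geometrically informative, but none of that machinery is needed for the $L^1_{\text{loc}}$ statement itself; your proof isolates exactly what is required.
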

\begin{proof}
Recall $d$ defined by (\ref{ddd1}), consider a target point $X_0=(x_0,z_0)$ such that $z_0>2d+1$ and select a cutoff function $\phi\in C^2_c(\mathbb{R})$ such that $\phi\geq0$ and spt$\,\phi\subseteq[z_0-2d,z_0+2d]$. Then let $T_0=z_0-2d-1$, fix an arbitrary time $T>T_0$, and consider $T_0\leq t\leq T$ and $L\geq T+3+z_0+2d$. 

We wish to show that 
\begin{equation*}\liminf\limits_{i\to\infty}\int_{\tilde{\Sigma}^{i}_{t_{i}}\cap B^{M\times\mathbb{R}}_d(X_0)}|D(H+P)|^2<\infty.\end{equation*}
To this end, we calculate
\begin{align}\label{a}
 &\dfrac{d}{dt}\int_{\tilde{\Sigma}^{\varepsilon}_t}\phi(z) (H+P)^2\\
 \,\,=&\int_{\tilde{\Sigma}^{\varepsilon}_t}2\phi(H+P)\dfrac{\partial}{\partial t}(H+P)+(H+P)^2\dfrac{\partial\phi}{\partial z}\cdot\dfrac{\nu_{\varepsilon}}{H+P}+\phi H(H+P)\notag\\
=&-2\int_{\tilde{\Sigma}^{\varepsilon}_t}\phi\Biggl((H+P)\Delta\left(\frac{1}{H+P}\right)+|A|^2+\bar{Ric}(\nu_{\varepsilon},\nu_{\varepsilon})-\bar{\nabla}_{\nu_{\varepsilon}}P\notag\\
&+\dfrac{2D_i(H+P)}{H+P}K_{i\nu_{\varepsilon}}\Biggr)+(H+P)\dfrac{\partial\phi}{\partial z}\cdot\nu_{\varepsilon}+\phi H(H+P)\notag\\
=&\int_{\tilde{\Sigma}^{\varepsilon}_t}\phi\Biggl(-2\dfrac{|D(H+P)|^2}{(H+P)^2}-2|A|^2-2\bar{Ric}(\nu_{\varepsilon},\nu_{\varepsilon})+H(H+P)\notag\\
\notag&+2\bar{\nabla}_{\nu_{\varepsilon}}P-4\dfrac{D_i(H+P)}{H+P}K_{i\nu_{\varepsilon}}\Biggr)-2\dfrac{\phi}{\partial z}\cdot\dfrac{D(H+P)}{H+P}+(H+P)\dfrac{\partial\phi}{\partial z}\cdot\nu_{\varepsilon}
\end{align}
\vskip 0.1 true in
\noindent In view of the sup estimates  (\ref{supest1}) and (\ref{supestimate}) for $u_{\varepsilon}$, there is $R(T)>0$ depending only on the subsolution $v$ and $K_{ij}$ such that \\
\begin{equation*} 
 \tilde{\Sigma}^{\varepsilon}_t\cap(M\times\text{spt}\phi)\subseteq S(T):=(B_{R(T)}\backslash E_0)\times[z_0-2d,z_0+2d],\quad T_0\leq t\leq T.
\end{equation*}
The Outward Optimising Property \ref{outward1}, applied to $\tilde{E}^{\varepsilon}_t$ compared to the perturbation $\tilde{E}^{\varepsilon}_t\cup S(T)$, then provides the area estimate
\begin{equation}\label{areaest}
 |\tilde{\Sigma}^{\varepsilon}_t\cap(M\times\text{spt}\phi)|\leq C(T)+\int_{S(T)\backslash \tilde{E}^{\varepsilon}_t}P\leq C(T,\|K\|_{C^0}),\quad T_0\leq t\leq T.
\end{equation}
Together with the interior estimate (\ref{H+P estimate}), and the boundary gradient estimates for $u^{\varepsilon}$, this shows 
\begin{equation*}
|H+P|\leq C(T,\|K\|_{C^1})\quad\quad\text{ on }\tilde{\Sigma}^{\varepsilon}_t\cap(M\times\text{spt}\phi),\quad T_0\leq t\leq T.
\end{equation*}
It follows that 
\begin{equation*}
 \int_{\tilde{\Sigma}^{\varepsilon}_t}\phi|H|(H+P)+\phi(H+P)^2+|(H+P)\bar{\nabla}\phi\cdot\nu_{\varepsilon}|\leq C(T,\|K\|_{C^1}), \quad T_0\leq t\leq T.
\end{equation*}
We estimate the $D\phi$ and $K_{i\nu_{\varepsilon}}$ terms via
\begin{align*}
\left|2D\phi\cdot\dfrac{D(H+P)}{H+P}\right|&\leq2\dfrac{|D\phi|^2}{\phi}+\dfrac{\phi}{2}\dfrac{|D(H+P)|^2}{(H+P)^2}\leq C+\dfrac{\phi}{2}\dfrac{|D(H+P)|^2}{(H+P)^2},\\
\left|4\phi\dfrac{D_i(H+P)}{H+P}K_{i\nu_{\varepsilon}}\right|&\leq8\phi\|K\|_{C^0}^2+\dfrac{\phi}{2}\dfrac{|D(H+P)|^2}{(H+P)^2}.
\end{align*}
Thus (\ref{a}) becomes
\begin{equation}\label{monotone}
 \dfrac{d}{dt}\int_{\tilde{\Sigma}^{\varepsilon}_t}\phi( H+P)^2\leq\int_{\tilde{\Sigma}^{\varepsilon}_t}-\phi\dfrac{|D(H+P)|^2}{(H+P)^2}+C(T,\|K\|_{C^1}),
\end{equation}
and integrating gives
\begin{equation}
 \int_{T_0}^T\int_{\tilde{\Sigma}^{\varepsilon}_t\cap(M\times[z_0-2d,z_0+2d])}\dfrac{|D(H+P)|^2}{(H+P)^2}\leq C(T,\|K\|_{C^1}),
\end{equation}
using a $\phi$ such that $\phi=1$ on $[z_0-2d,z_0+2d]$.\\
Applying Fatou's Lemma, for any sequence $\varepsilon_i\rightarrow0$ we obtain
\begin{equation}\label{bb2}
 \liminf_{i\rightarrow\infty}\int_{\tilde{\Sigma}^{i}_t\cap(M\times[z_0-2d,z_0+2d])}\dfrac{|D(H+P)|^2}{(H+P)^2}<\infty, \quad\quad\text{a.e. }t\geq T_0.
\end{equation}
Now consider the subsequence $\varepsilon_{i_j}\to0$ from (\ref{subsequence1}) such that $\tilde{\Sigma}^{\,i_j}_ {t_{i_j}}\to \tilde{\Sigma}_0$ in $C^1(T\cap B^{n+1}_R(X_0))$, where $T=T_{X_0}\tilde{\Sigma}_{X_0}$. We write $i=i_j$ henceforth. Since (\ref{bb2}) only holds for a.e. $t\geq T_0$, it will take more work to argue that $\liminf\limits_{i\to\infty}\int_{\tilde{\Sigma}^{i}_{t_{i}}\cap B^{M\times\mathbb{R}}_d(X_0)}|D(H+P)|^2<\infty$. To this end, we pick a sequence $\hat{t}_i$ such that $\hat{t}_i\to t_0+\delta$ for some $|\delta|>0$, $|\hat{t}_i-t_i|\leq \varepsilon_i d$ and
\begin{equation}\label{l31}
\liminf_{i\to\infty}\int\limits_{\tilde{\Sigma}^i_{\hat{t}_i}\cap(M\times[z_0-2d,z_0+2d])}|D(H+P)|^2<\infty.
\end{equation}
Define $\hat{z}_i:=\dfrac{u_i(x_0)}{\varepsilon_i}-\dfrac{\hat{t}_i}{\varepsilon_i}$ and $\delta_i:=\hat{z}_i-z_0$. Then the fact that $\tilde{\Sigma}^i_{\hat{t}_i}$ is just a translation of $\tilde{\Sigma}^i_{t_i}$ by $\delta_i$ in the $z$-direction implies that 
\begin{equation*}
\int\limits_{\tilde{\Sigma}^i_{t_i}\cap B^{M\times\mathbb{R}}_d(X_0)}|D(H+P)|^2=\int\limits_{\tilde{\Sigma}^i_{\hat{t}_i}\cap B^{M\times\mathbb{R}}_d(x_0,z_0+\delta_i)}|D(H+P)|^2,
\end{equation*}
for each $i$. Furthermore, the condition $|\hat{t}_i-t_i|\leq \varepsilon_i d$  implies that $|\delta_i|=|\hat{z}_i-z_0|\leq d$, which ensures that $\tilde{\Sigma}^i_{\hat{t}_i}\cap B_d^{M\times\mathbb{R}}(x_0,z_0+\delta_i)\subset M\times[z_0-2d,z_0+2d]$, and thus from (\ref{l31}) we obtain that
\small\begin{equation*}
\liminf_{i\to\infty}\int_{\tilde{\Sigma}^i_{\hat{t}_i}\cap B^{M\times\mathbb{R}}_d(x_0,z_0+\delta_i)}|D(H+P)|^2\leq\int_{\tilde{\Sigma}^i_{\hat{t}_i}\cap (M\times[z_0-2d,z_0+2d])}|D(H+P)|^2<\infty,
\end{equation*}\normalsize
from which our desired estimate follows
\begin{equation}\label{l41}
\liminf_{i\to\infty}\int_{\tilde{\Sigma}^{i}_{t_{i}}\cap B^{M\times\mathbb{R}}_d(X_0)}|D(H+P)|^2<\infty.
\end{equation}
As in the proof of Lemma (\ref{foliation1}), the converging surfaces $\tilde{\Sigma}^i_{t_i}$ can be written locally, via the exponential map, as graphs of $C^{1,\alpha}_{\text{loc}}$ functions $w_i$ over the hyperplane $T$. This local $C^{1,\alpha}$ convergence of the hypersurfaces, together with the first variation of area formula and the Riesz Representation Theorem then implies that $H_{\tilde{\Sigma}_{_{X_0}}}$ exists weakly as a locally $L^1$ function, with the weak convergence
\begin{equation}\label{wkHconvergence}
\int_{\tilde{\Sigma}^i_{t_i}}H_{\tilde{\Sigma}^i_{t_i}}\nu_{\tilde{\Sigma}^i_{t_i}}\cdot X\to\int_{\tilde{\Sigma}_{_{X_0}}}H_{\tilde{\Sigma}_{_{X_0}}}\nu_{\tilde{\Sigma}_{_{X_0}}}\cdot X,\quad\quad X\in C^0_c(T(M\backslash E_0\times\mathbb{R})).
\end{equation}
Then by (\ref{l41}) and Rellich's theorem there exists a subsequence (again denoted by $i$) such that 
\begin{equation}
(H+P)_{\tilde{\Sigma}^{i}_{t_{i}}}\to (H+P)_{\tilde{\Sigma}_{_{X_0}}}\quad\quad\text{in }L^2(T\cap B^{n+1}_R(X_0))).
\end{equation}
Now the level-sets $\tilde{\Sigma}^i_{t_i}=\{U_i=t_i\}$ smoothly solve $(*)$ in $\Omega_i\times\mathbb{R}$, thus
\begin{equation*}
(H+P)_{\tilde{\Sigma}^i_{t_i}}=|\bar{\nabla} U_i|,
\end{equation*}
and
\begin{equation}\label{l51}
\int_{\tilde{\Sigma}^{i}_{t_{i}}}|\bar{\nabla} U_{i}|^2=\int_{\tilde{\Sigma}^{i}_{t_{i}}}(H+P)^2\to\int_{\tilde{\Sigma}_{_{X_0}}}(H+P)^2.
\end{equation}
To proceed, we consider the special behaviour of the solution in the jump region. Let us first consider the case where the limit surface $\tilde{\Sigma}_{X_0}$ given by Lemma \ref{foliation1} is not a vertical cylinder. Then it is a graph, which means that  $|\nabla\hat{u}_i|$ converges locally uniformly to something finite, and therefore that $|\nabla u_i|=\varepsilon_i|\nabla\hat{u}_i|$ converges locally uniformly to $0$.
In the other case the surface $\tilde{\Sigma}_{X_0}$ given by Lemma \ref{foliation1} is a vertical cylinder. We know from (\ref{l41}) and (\ref{l51}) that $|\bar{\nabla} U_{\bar{i}}|$ converges in $L^2$ to something finite. However this limit can only be zero since $U_i\to U$ locally uniformly, and $U$ is constant in the jump region (namely $U=t_0$ on $\tilde{\mathcal{K}}_{t_0}$ by hypothesis). Furthermore, since the local uniform convergence of $U_i\to t_0$ holds for the entire sequence $i$, we must have $L^2$ convergence of the entire sequence $|\bar{\nabla} U_i|$, which implies
\begin{equation*}
\int_{\tilde{\Sigma}^i_{t_i}}|\bar{\nabla}U_i|\to0.
\end{equation*}
\end{proof}
\begin{MOTS1Proof} Proposition \ref{jumpprop} and Lemma \ref{conv} imply that each surface $\tilde{\Sigma}_{X_0}$ in the jump region bounds a Caccioppoli set that minimises area plus bulk energy $\left(\bar{g}^{ij}-\tilde{\nu}^i\tilde{\nu}^j\right)K_{ij}$ in $\tilde{\mathcal{K}}_{t_0}$. To complete the proof of Proposition \ref{jumpprop}, it remains to show that each surface $\tilde{\Sigma}_{X_0}$ in $\tilde{\mathcal{K}}_{t_0}$ is in fact a smooth MOTS. To proceed, we use the connection between parametric and non-parametric variational problems, that follows from the relationship between a function $w\in BV_{\text{loc}}(\Omega)$ and its subgraph
\begin{equation}\label{subgraph}
W=\{(x,t)\in\Omega\times\mathbb{R}:t<w(x)\}.
\end{equation}
In particular, let $\varphi_W$ denote the characteristic function of the subgraph (\ref{subgraph}). Then Theorem 14.6 in [Gi] states
\begin{equation}\label{giusti}
\int_{\Omega}\sqrt{1+|\nabla w|^2}=\int_{\Omega\times\mathbb{R}}|\nabla\varphi_W|.
\end{equation}

In (\ref{subsequence1}) we established that at each point $X_0\in\tilde{\mathcal{K}}_{t_0}$, there exists a subsequence $i_j$ and a function $\hat{w}\in C^{1,\alpha}(\hat{B}_R(\hat{x}_0))$ such that 
\begin{equation*}
\hat{w}_{i_j}\to\hat{w}\quad\quad\text{in }C^1(\hat{B}_R(\hat{x}_0)),
\end{equation*}
where $\hat{B}_R(\hat{x}_0):=\hat{T}\cap B^{n+1}_R(\hat{x}_0)$, where
\begin{equation}\label{Graph}
\text{graph}(\hat{w})=\hat{\Sigma}_{X_0}=\exp_q^{-1}\left(\tilde{\Sigma}_{X_0}\cap B^{M\times\mathbb{R}}_{R}(X_0)\right)
\end{equation}
Then Lemma \ref{minimising} establishes that each surface $\tilde{\Sigma}_{X_0}$ bounds a Caccioppoli set $E$ in $\tilde{\mathcal{K}}_{t_0}$ that minimises area plus bulk energy $(\bar{g}^{ij}-\tilde{\nu}^i\tilde{\nu}^j)K_{ij}$ in $\tilde{\mathcal{K}}_{t_0}$, where by construction $\tilde{\nu}$ is the outward unit normal vector to the relative boundary $\partial \tilde{E}\cap \tilde{\mathcal{K}}_{t_0}$.\\

Therefore, writing the Caccioppoli set $E$ locally as the subgraph of $w:=\exp_q^{-1}(\hat{w})$, we find from (\ref{giusti}) that $w$ minimises the functional
\begin{align*}
\mathring{J}^{B_R(x_0)}_{\tilde{\nu}}(w):=&\int_{B_R(x_0)}\sqrt{1+|\bar{\nabla} w|^2}dx\\
\notag&+\int_{B_R(x_0)}\int_0^{w(x)}\text{tr}_MK_{ij}(x,\tau)-K_{ij}(x,\tau)\tilde{\nu}^i(x,\tau)\tilde{\nu}^j(x,\tau)d\tau dx
\end{align*}
in $B_R(x_0):=\exp_q(\hat{B}_R\hat{x}_0))$, whose Euler-Lagrange equation is the MOTS equation
\begin{equation}\label{minsurfeqn}
 \text{div}\left(\dfrac{\bar{\nabla} w}{\sqrt{1+|\bar{\nabla} w|^2}}\right)+\left(\bar{g}^{ij}-\tilde{\nu}^i\tilde{\nu}^j\right)K_{ij}=0,
\end{equation}
and by construction $\tilde{\nu}=\dfrac{(\bar{\nabla}w,-1)}{\sqrt{1+|\bar{\nabla}w|^2}}$. The left hand side of $(\ref{minsurfeqn})$ is an elliptic operator of the form
\begin{equation*}
 Aw=a^{ij}(\bar{\nabla} w)\left(\bar{\nabla}_i\bar{\nabla}_jw+\sqrt{1+|\bar{\nabla} w|^2}K_{ij}\right),
\end{equation*}
where 
\begin{equation*}
 a^{ij}(p):=\dfrac{1}{\sqrt{1+|p|^2}}\left(\bar{g}^{ij}-\dfrac{p^ip^j}{1+|p|^2}\right).
\end{equation*}
Since $w\in C^{1,\alpha} (B_R(x_0))$, $a^{ij}\in C^{0,\alpha}(B_R(x_0))$, $Aw$ is strictly elliptic on $B_R(x_0)$. Schauder theory then implies that $w\in C^{2,\alpha}(B_R(x_0))$, and by bootstrapping further we obtain $w\in C^{\infty}(B_R(x_0))$. Using a suitable partition of unity, we obtain that each surface $\tilde{\Sigma}_{X_0}$ is a smooth MOTS in $\tilde{\mathcal{K}}_{t_0}$. 
\end{MOTS1Proof}

\section{Existence of weak solutions}\label{2.5}
In this section we use the normal vector field $\tilde{\nu}$ of the hypersurface foliation of the jump region $\tilde{\mathcal{K}}_{t_0}$ from Proposition \ref{jumpprop} to extend $\bar{\nu}=\frac{\bar{\nabla}U}{|\bar{\nabla}U|}$ across the jump region, thereby constructing a globally defined normal vector field $\bar{\nu}$. Existence of weak solutions is then proven by taking the limit of the translating graphs $\tilde{\Sigma}^{\varepsilon}_t$, using Compactness Property \ref{compactness}. 

\begin{Theorem}[Existence of weak solutions]\label{altIMCF}
Let $(M^{n+1},g,K)$ be a complete, connected, asymptotically flat initial data set without boundary, that satisfies $\text{tr}_MK\geq0$. Then for any nonempty, precompact, open set $E_0\subset M$ with $C^2$ boundary, there exists a weak solution of $(**)$ with initial condition $E_0$. 
\end{Theorem}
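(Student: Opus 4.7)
The plan is to construct the weak solution as the limit of the elliptic regularisation scheme from Section 3, equipped with the normal vector field produced by the jump region analysis of Section 4. First I would, for each $L>0$, invoke Lemmata \ref{apriori} and \ref{approxlemma} to obtain a smooth solution $u_{\varepsilon,L}$ of $(\ast)_{\varepsilon}$ on $\Omega_L=F_L\setminus\bar{E}_0$ for every sufficiently small $\varepsilon<\varepsilon(L)$. Along a diagonal sequence $L_i\to\infty$ and $\varepsilon_i\to 0$, the locally uniform Lipschitz estimate (\ref{lipschitz}) and Arzel\`a--Ascoli produce a locally Lipschitz limit $u\colon M\setminus E_0\to\mathbb{R}$ as in (\ref{subseq}); setting $U(x,z):=u(x)$ as in (\ref{Udefn1}) gives the translation invariant time-of-arrival function required by Definition \ref{weakdefinition}(i). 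The normalisation $u=0$ on $\partial E_0$ and the lower bound $u\geq-\varepsilon$ from (\ref{supest1}) pass to the limit, and the subsolution comparison $u_{\varepsilon}\geq v+(s-1)(L-2)-2$ against $v=\alpha\log r$ gives $u(x)\to\infty$ as $\mathrm{dist}(x,E_0)\to\infty$. Inside $E_0$, one extends by $u(x):=-\mathrm{dist}(x,\partial E_0)$ to satisfy the sign condition.

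Next I would construct the vector field $\bar{\nu}$ in two regimes. Outside every jump region, the local uniform $C^{1,\alpha}$ estimate of Lemma \ref{regcorollary} together with Proposition \ref{jumpprop} identifies the region where $\bar{\nabla}U\neq 0$ with the graphical subset $\Omega_G\times\mathbb{R}$, and there the natural choice $\bar{\nu}:=\bar{\nabla}U/|\bar{\nabla}U|$ coincides with the $C^{0,\alpha}_{\mathrm{loc}}$ limit of $\bar{\nabla}U_i/|\bar{\nabla}U_i|$ obtained from the converging translating graphs $\tilde{\Sigma}^{\varepsilon}_t$. Inside the interior $\tilde{\mathcal{K}}_{t_0}$ of each jump region, Proposition \ref{jumpprop} provides the $C^{1,\alpha}_{\mathrm{loc}}$ foliation together with its translation invariant $C^{0,\alpha}_{\mathrm{loc}}$ unit normal $\tilde{\nu}$, and I set $\bar{\nu}:=\tilde{\nu}$ there. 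Since both definitions are limits of $\bar{\nabla}U_i/|\bar{\nabla}U_i|$ along the same diagonal subsequence, the patching is consistent across $\partial\tilde{\mathcal{K}}_{t_0}$ and yields a global translation invariant field $\bar{\nu}\in C^{0,\alpha}_{\mathrm{loc}}(T((M\setminus E_0)\times\mathbb{R}))$, verifying Definition \ref{weakdefinition}(iii).

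To verify the minimisation requirement (ii), I would appeal to the Smooth Flow Lemma \ref{SMOO}: each $\tilde{E}^{\varepsilon_i}_t=\{U_{\varepsilon_i}<t\}$ minimises $\mathcal{J}_{U_{\varepsilon_i},\bar{\nu}_{\varepsilon_i}}$ in $\Omega_{L_i}\times\mathbb{R}$. The locally uniform convergence $U_i\to U$, $\bar{\nu}_i\to\bar{\nu}$ together with the locally uniform gradient bound (\ref{lipschitz}) places us in the hypotheses of Compactness Property \ref{COMP}, so the limit $(U,\bar{\nu})$ satisfies (\ref{Mvar}) on $(M\setminus E_0)\times\mathbb{R}$; equivalently, by Lemma \ref{equiv}, $\tilde{E}_t=\{U<t\}$ minimises $\mathcal{J}_{U,\bar{\nu}}$ for every $t>0$. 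The additional clause that each interior point of a jump region $\{U=t_0\}$ lies on the boundary of a Caccioppoli set minimising $\mathcal{J}_{U,\bar{\nu}}$ in $\tilde{\mathcal{K}}_{t_0}$ is precisely the content of Lemma \ref{minimising}, whose proof uses Compactness Property \ref{compactness} applied to the foliation surfaces $\tilde{\Sigma}_{X_0}$.

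The main obstacle, and the point where genuine care is needed, is showing that the patching of $\bar{\nu}$ across the boundary of the jump region remains in $C^{0,\alpha}_{\mathrm{loc}}$ rather than only measurable. The construction of $\tilde{\nu}$ in Lemma \ref{foliation1} proceeds via a diagonal subsequence over a countable dense set, while the normal $\bar{\nabla}U_i/|\bar{\nabla}U_i|$ in the graphical region is defined for the full sequence; the compatibility of these two limits on $\partial\tilde{\mathcal{K}}_{t_0}$ ultimately rests on the uniform $C^{1,\alpha}$ control from Corollary \ref{regcorollary} together with the strong maximum principle argument (after Lemma \ref{foliation1}) that rules out tangential contact of distinct limit leaves, so that $\bar{\nu}$ extends continuously. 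Once this is in place, Definition \ref{weakdefinition} is fully verified and the theorem follows.
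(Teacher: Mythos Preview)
Your proposal follows essentially the same route as the paper: obtain $U$ as the locally uniform limit of the elliptic regularisation, define $\bar{\nu}$ using the jump-region foliation of Proposition \ref{jumpprop}, and pass the minimisation property to the limit via Compactness Property \ref{COMP} (resp.\ \ref{compactness} for the jump-region clause). Two places where your sketch diverges from the paper and is slightly imprecise. First, you misread $\Omega_G$: in the paper it denotes the graphical portion \emph{inside} the jump region $\tilde{\mathcal{K}}_{t_0}$, not the regular set where $\bar{\nabla}U\neq 0$; outside the jump regions the level sets are vertical cylinders and $\bar{\nu}=\bar{\nabla}U/|\bar{\nabla}U|$ directly. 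Second, and more substantively, your patching argument at $\partial\tilde{\mathcal{K}}_{t_0}$ rests on the claim that both pieces of $\bar{\nu}$ arise as limits ``along the same diagonal subsequence''; this is not quite true, since the foliation in Lemma \ref{foliation1} is built via further target-point-dependent subsequences. The paper handles this instead by a separate explicit construction: at points of $\tilde{\Sigma}_{t_0}^+$ (resp.\ $\tilde{\Sigma}_{t_0}$) it approximates by points $X_i\in\tilde{\Sigma}_{t_i}$ at \emph{regular} nearby times $t_i\searrow t_0$ (resp.\ $t_i\nearrow t_0$), uses the uniform $C^{1,\alpha}$ bounds to extract a limit graph, and then argues this limit lies in $\tilde{\Sigma}_{t_0}^+$ (resp.\ $\tilde{\Sigma}_{t_0}$). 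The resulting four-case piecewise definition of $\bar{\nu}$ is then fed into Compactness Property \ref{compactness}. Your recognition that this boundary patching is ``the main obstacle'' is correct; the paper's resolution is just more explicit than the maximum-principle heuristic you suggest.
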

\begin{proof}
Let $U$ be the limit of $U_{\varepsilon}$ as given by (\ref{Udefn1}). We construct the vertical cylinders $\tilde{\Sigma}_t:=\partial \{U<t\}$ and $\tilde{\Sigma}_t^+:=\partial \{U>t\}$ with local uniform $C^{1,\alpha}$ bounds and unit normal vector field $\nu$ with local  $C^{0,\alpha}$ bounds. Then using Theorem (\ref{compactness}), we show that $\{U<t\}$ minimises $\mathcal{J}_{U,\bar{\nu}}$ in $(M\backslash E_0)\times\mathbb{R}$ for each $t$, where $\bar{\nu}$ is extended in the jump regions $\tilde{\mathcal{K}}_{t_0}$ by the normal vector field $\tilde{\nu}$ to the family of smooth MOTS $\{\tilde{\Sigma}_{_{X_0}}\}_{X_0\in\tilde{\mathcal{K}}_{t_0}}$. \\ 

\noindent \textit{i)} In the case where $\tilde{\Sigma}_t=\tilde{\Sigma}_t^+$, the surface $\tilde{\Sigma}_t$ is constructed by fixing a point $X_0=(x_0,z_0)\in \tilde{\Sigma}_t$ and considering the sequence of times $t_i$ such that $X_0\in \tilde{\Sigma}^i_{t_i}$ for each $i$. It then follows exactly as in the proof of Lemma \ref{foliation1} that $\tilde{\Sigma}^i_{t_i}$ converges locally uniformly to $\tilde{\Sigma}_t$. Since $\tilde{\Sigma}_t=\tilde{\Sigma}_t^+$ is a vertical cylinder, convergence holds for the full sequence, and the unit normal vector field $\tilde{\nu}$ is equal to $\frac{\bar{\nabla}U}{|\bar{\nabla}U|}$. \\

\noindent\textit{ii)} We use a slightly different pointwise approach to construct $\tilde{\Sigma}_t$ and $\tilde{\Sigma}_t^+$ when $\tilde{\Sigma}_t\neq\tilde{\Sigma}_t^+$. To this end, let $X_0\in\tilde{\Sigma}_{t_0}^+$ at a jump time $t_0$. Since there are only countably many such $t_0$, there exists a sequence of points $X_i\in \tilde{\Sigma}_{t_i}$ with $t_i>t_0$, satisfying $\lim\limits_{i\to\infty}X_i=X_0$ and $\lim\limits_{i\to\infty}t_i=t_0$. For $i\gg1$ large enough, we can assume that $\tilde{\Sigma}_{t_i}=\tilde{\Sigma}_{t_i}^+$, and  as above each surface piece $\tilde{\Sigma}_{t_i}\cap B^{M\times\mathbb{R}}_R(X_i)$ can therefore be written via the exponential map (denoted by the hat superscript) as the graph of a $C^{1,\alpha}$ function $\hat{w}_i$ over $T_{\hat{X}_i}\hat{\Sigma}_{t_i}$, where
\begin{equation*}
 \hat{\Sigma}_{t_i}:=\exp_{X_i}^{-1}(\tilde{\Sigma}_{t_i}\cap B^{M\times\mathbb{R}}_d(X_i)).
 \end{equation*}
Now consider the sequence $\hat{\nu}_i$ of normal vectors to $\hat{\Sigma}_{t_i}$ at $\hat{X}_i$. Since the $\hat{\nu}_i(\hat{X}_i)$ are uniformly bounded in $C^{0,\alpha}$, there exists a subsequence $\hat{\nu}_{i_j}$ and a unit vector field $\hat{\nu}$ such that $\hat{\nu}_{i_j}\to\hat{\nu}$ uniformly on $B^{n+2}_{\hat{R}}(\hat{X}_i)$. Let $\hat{T}$ denote the hyperplane containing $\hat{X}_0$ and orthogonal to $\hat{\nu}(\hat{X}_0)$. For $i\gg1$ large enough, we can then write each surface $\hat{\Sigma}_{t_i}$  locally as the graph of a $C^{1,\alpha}$ function $\hat{w}_i$ over $\hat{T}\cap B^{n+2}_{\hat{R}}(\hat{X}_0)$. 
By Arzela-Ascoli, there exists a further subsequence $\hat{w}_{i_j}$ and a $C^{1,\alpha}$ function $\hat{w}:\hat{T}\cap B^{n+1}_{\hat{R}}(\hat{X}_i)$ such that 
\begin{equation*}
\hat{w}_i\to\hat{w}\quad\quad\text{in }C^1(\hat{T}\cap B^{n+1}_{\hat{R}}(\hat{X}_i)),
\end{equation*}
where $\hat{X}_0\in\text{graph}\,\hat{w}$ and $\hat{T}=T_{X}\text{graph}\,\hat{w}$. In order to recognise graph$\,\hat{w}$ as a piece of $\hat{\Sigma}_{t_0}^+$ and $\hat{T}$ as $T_{\hat{X}_0}\hat{\Sigma}_{t_0}^+$, we consider a point $Y\in$ graph$\,\hat{w}$. Then there exists a sequence $Y_i\in\text{graph}\,\hat{w}_i\subset \hat{\Sigma}_{t_i}$ such that $Y_i\to Y$, and thus $\hat{U}(Y_i)=t_i$ implies $\hat{U}(Y)=t_0$, where $\hat{U}:=U\circ \exp.$

In order to obtain a contradiction, assume that $Y\in \hat{E}_{t_0}^+$. Then there exists $\delta>0$ such that $B^{n+2}_{\delta}(Y)\in \hat{E}_{t_0}^+$, however this contradicts the fact that $Y_i\in \hat{\Sigma}_{t_i}$ for $t_i>t_0$. Thus we deduce that graph$\,\hat{w}\in \hat{\Sigma}_{t_0}^+$ as required. The case where $X_0\in \tilde{\Sigma}_{t_0}$ for $\tilde{\Sigma}_{t_0}\neq \Sigma_{t_0}^+$ follows analogously.\\

In Proposition \ref{jumpprop} we constructed a family of surfaces $\{\tilde{\Sigma}_{_{X_0}}\}_{X_0\in\tilde{\mathcal{K}}_{t_0}}$ foliating the jump region $\tilde{\mathcal{K}}_{t_0}$ of $U$. This foliation has a $C^{0,\alpha}_{\text{loc}}$ normal vector field $\tilde{\nu}$, which extends the vector field of the surfaces $\tilde{\Sigma}_{t_0}$ and $\tilde{\Sigma}_{t_0}^+$ as a calibration across the jump region at jump times  $t_0$ via the definition
\begin{equation*}\label{nudefn}\quad
\bar{\nu}(x):=\begin{cases}\begin{aligned}
&\frac{\bar{\nabla}U}{|\bar{\nabla}U|}(x)\quad\quad\quad\,\,\,  \text{if }x\in\Sigma_t\text{ at regular times }t,\\
&\tilde{\nu}\quad\quad\quad\quad\quad\quad\,\,\,\, \text{if }x\in\tilde{\mathcal{K}}_{t_0} \text{ at a jump time }t_0,\\
&\lim_{i\to\infty}\frac{\bar{\nabla}U}{|\bar{\nabla}U|}(x_i)\,\,\, \text{ if }x\in\tilde{\Sigma}_{t_0}, \text{ where } x_i\in \tilde{\Sigma}_{t_i}\text{ for }x_i\to x, t_i\nearrow t_0,\\
&\lim_{i\to\infty}\frac{\bar{\nabla}U}{|\bar{\nabla}U|}(x_i)\,\,\, \text{ if }x\in\tilde{\Sigma}_{t_0}^+, \text{ where }x_i\in \tilde{\Sigma}_{t_i} \text{ for }x_i\to x, t_i\searrow t_0.
\end{aligned}
\end{cases}
\end{equation*}
This global interpretation of the normal vector field $\bar{\nu}$ in $M\backslash\bar{E_0}\times\mathbb{R}$ means the functional $\mathcal{J}_{U,\bar{\nu}}$ is well defined on $M\backslash\bar{E_0}\times\mathbb{R}$, and it follows from Compactness Property \ref{compactness} that the sets $\{U<t\}$ minimises $\mathcal{J}_{U,\bar{\nu}}$ in $M\backslash\bar{E}_0\times\mathbb{R}$ for each $t$. The result then follows from Lemma \ref{minimising}. \end{proof}

\section{Applications of weak solutions}\label{2.6}
In this section we highlight the natural applications of weak solutions of $(**)$ to the existence theory for MOTS and to the theory of weak solutions of IMCF.\\

\textbf{MOTS.} The one-sided variational principal associated with outward optimisation implies that the solution must jump at $t=0$ wherever the null mean curvature of $\Sigma_0=\partial E_0$ is strictly negative. Together with Proposition \ref{MOTS1}, this implies the following existence theorem for MOTS in initial data sets $(M,g,K)$ containing an outer trapped surface $\Sigma_0$ such that $\theta^+_{\Sigma_0}<0$.
\begin{Proposition}[Existence of smooth MOTS]\label{MOTS}
Let $(M^{n+1},g,K)$ be an asymptotically flat initial data set of dimension $n+1\leq 7$ satisfying $\text{tr}_MK\geq0$, and let $E_0$ be any nonempty, precompact, smooth open set in $M$ satisfying $\theta^+_{\partial E_0}<0$ with respect to the unit normal pointing out of $E_0$. 
Then the level set $\partial \{u>0\}$ of the weak solution $(U(x,z)=u(x),\bar{\nu})$ of $(**)$ is a smooth MOTS in $M\backslash E_0$. \end{Proposition}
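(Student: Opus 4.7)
The overall strategy is to show that $\theta^+_{\partial E_0} < 0$ forces an immediate jump of the weak solution at $t = 0$, and then to realise the outer boundary $\partial\{u > 0\} = \partial E_0^+$ as a limit of the smooth MOTS leaves provided by Proposition \ref{MOTS1}. Smoothness follows from a $C^{1,\alpha}$ compactness argument combined with elliptic regularity for the MOTS operator.

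The first step is to verify that $E_0$ fails to be outward optimising. For an outward tube $F_\varepsilon \supset E_0$ generated by flowing $\partial E_0$ with outer normal speed $\varepsilon f$, $f > 0$, the first variation gives
\[
|\partial F_\varepsilon| + \int_{F_\varepsilon \setminus E_0}(g^{ij}-\nu^i\nu^j)K_{ij} = |\partial E_0| + \varepsilon\!\int_{\partial E_0}\!(H+P)f + O(\varepsilon^2),
\]
and since $H + P = \theta^+_{\partial E_0} < 0$ strictly, the right-hand side drops below $|\partial E_0|$ for small $\varepsilon$, contradicting (\ref{mini}). By Outward Optimising Property \ref{outward1}$(iv)$ the weak solution jumps at $t = 0$, so $E_0^+ \supsetneq E_0$. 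Propagating the strict sign of $\theta^+$ to the parallel surfaces in a tubular neighbourhood of $\partial E_0$ gives $\mathrm{dist}(\partial E_0^+, \partial E_0) > 0$, so $\partial\{u > 0\} = \partial E_0^+ \subset M \setminus \overline{E_0}$.

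The second step is to realise $\partial\{u > 0\} \times \mathbb{R}$ as a limit of interior MOTS leaves from the jump region $\tilde{\mathcal{K}}_0 \subset M \times \mathbb{R}$. Fix $x_0 \in \partial E_0^+$ and lift to $X_0 = (x_0, z_0)$. Since $\Omega_G \times \mathbb{R}$ is bounded by vertical cylinders (proof of Proposition \ref{jumpprop}), I may choose an interior sequence $X_i \to X_0$ in $\overline{\Omega_C \times \mathbb{R}}$, so that each leaf $\tilde{\Sigma}_{X_i} = \Sigma_{X_i} \times \mathbb{R}$ is a vertical cylinder. By Proposition \ref{MOTS1} each $\Sigma_{X_i} \subset M$ is a smooth MOTS, and by Proposition \ref{jumpprop} these have local uniform $C^{1,\alpha}$ bounds. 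Writing each $\Sigma_{X_i}$ locally as a graph $w_i$ over a common tangent plane via the exponential map (as in Lemma \ref{foliation1}), Arzela-Ascoli extracts a $C^{1,\alpha}$-convergent subsequence $w_i \to w_*$ with limit graph $\Sigma_*$ through $x_0$. Passing the MOTS equation $\text{div}\bigl(\nabla w_i/\sqrt{1 + |\nabla w_i|^2}\bigr) + (g^{ij} - \nu^i\nu^j)K_{ij} = 0$ to the limit, the coefficients converge in $C^{0,\alpha}$, so Schauder theory promotes $w_*$ to $C^{2,\alpha}$ and bootstrapping to $C^\infty$.

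It remains to identify $\Sigma_*$ with $\partial\{u > 0\}$ near $x_0$: both surfaces contain $x_0$ and lie in $\overline{\{u = 0\}}$, with $\Sigma_*$ on the interior side of $\partial\{u > 0\}$ by construction, and the strong maximum principle for the MOTS operator then forces local coincidence. I expect this final identification to be the main obstacle: it requires matching the approach side of the approximating leaves and invoking the strong maximum principle for a quasilinear elliptic operator, with the strict outward optimisation of $E_0^+$ (Outward Optimising Property \ref{outward1}$(ii)$) ruling out tangential separation. The dimensional hypothesis $n + 1 \leq 7$ enters through Regularity Theorem \ref{regthm} to preclude singular sets in the measure-theoretic compactness.
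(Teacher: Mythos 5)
Your overall strategy mirrors the paper's: show that $\theta^+_{\partial E_0}<0$ forces an immediate jump via the outward optimising property, then use the MOTS structure of the jump-region foliation from Proposition \ref{MOTS1}. The paper states this argument in one sentence before Proposition \ref{MOTS}, so your expanded version is in the right spirit, and your first-variation computation showing $E_0$ fails (\ref{mini}) is exactly what ``the one-sided variational principal implies the solution must jump'' means.

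However, your step of reaching $\partial\{u>0\}$ via a sequence of interior leaves $\tilde{\Sigma}_{X_i}$ from $\tilde{\mathcal{K}}_0$ and then matching the limit with $\partial\{u>0\}$ using the strong maximum principle has a circularity you have correctly flagged but not resolved: the strong maximum principle can only compare two surfaces that are \emph{both} known to satisfy the MOTS operator (or at least one supersolution/one subsolution on the right sides), and the $C^{1,\alpha}$ regularity of $\partial\{u>0\}$ alone does not yet give this. You also assume you may choose $X_i$ with leaves lying in $\overline{\Omega_C\times\mathbb{R}}$; nothing in the proof of Proposition \ref{jumpprop} guarantees that the leaves adjacent to $\partial\{u>0\}\times\mathbb{R}$ are cylinders rather than steep graphs, so this part of your construction is not justified. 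The cleaner route, consistent with how $\tilde{\Sigma}_{t_0}^+$ is actually built in part (ii) of the proof of Theorem \ref{altIMCF}, is to treat $\partial\{u>0\}\times\mathbb{R}$ directly as a $C^{1,\alpha}$ limit of the translating graphs $\tilde{\Sigma}^i_{t_i}$ (passing through points $X_i\to X_0$ with $t_i\searrow 0$), and then run the argument of Lemma \ref{conv} and of the Schauder bootstrap in the proof of Proposition \ref{MOTS1} on \emph{this} sequence: the interior $H+P$ estimate and the area bound (\ref{areaest}) are local and valid on $\partial\{u>0\}\subset M\setminus\bar{E}_0$, and since $U_i\to U\equiv 0$ locally uniformly one again obtains $\int_{\tilde{\Sigma}^i_{t_i}}|\bar{\nabla}U_i|^2\to 0$, hence $H+P=0$ weakly, hence smoothness. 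This avoids both the detour through the interior foliation and the unresolved identification step, and is closer to what the paper (implicitly) uses.
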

Proposition \ref{MOTS} highlights the natural application of the theory of weak solutions to $(**)$ as a variational-type approach to constructing marginally outer trapped surfaces. Let $\kappa$ denote the largest eigenvalue of $K$ with respect to $g$ across $M\backslash E_0$. Then in the special case where the outermost MOTS in $M\backslash E_0$ satisfies
\begin{equation}
|\partial E|\leq|\partial F|-n\kappa\mathcal{L}^{n+1}(E\backslash F),
\end{equation}
for every closed MOTS $\partial F$ in $M\backslash E_0$, then the weak solution to $(**)$ in Proposition \ref{MOTS} will jump to the outermost MOTS $\Sigma=\partial\{u>0\}$ at $t=0$. In general, given any initial data, and any initial condition $E_0$ satisfying $\theta^+_{\partial E_0}<0$, the surface can't jump beyond the outmost MOTS at $t=0$, and the MOTS $\partial\{u>0\}$ is an inner barrier for the outermost MOTS $\Sigma$ in $M\backslash E_0$.

We compare Proposition \ref{MOTS}  to the following existence theorem combining \cite{AM09} and \cite{E09}, as stated in \cite{AEM09}.
\begin{Theorem}[\cite{AM09,E09}]\label{AEM}
Let $(M,g,K)$ be an initial data set of dimension $n+1\leq 7$ and let $\Omega\subset M$ be a connected bounded open subset with smooth embedded boundary $\partial\Omega$. Assume this boundary consists of two non-empty closed hypersurfaces $\partial_+\Omega$ and $\partial_-\Omega$, possibly consisting of several components, such that 
\begin{equation}
H_{\partial_+\Omega}-\text{tr}_{\partial_+\Omega}K>0\quad\text{and }H_{\partial_+\Omega}+\text{tr}_{\partial_+\Omega}K>0,
\end{equation}
where the mean curvature scalar is computed as the tangential divergence of the unit normal vector field that is pointing out of $\Omega$. Then there exists a smooth closed embedded hypersurface $\Sigma\subset\Omega$ homologous to $\partial_-\Omega$ such that $H_{\Sigma}+\text{tr}_{\Sigma}K=0$ (where $H_{\Sigma}$ is computed with respect to the unit normal pointing towards $\partial_-\Omega$).
$\Sigma$ is $\lambda$-almost minimising in $\Omega$ for $\lambda=2(n+1)\kappa$, where $\kappa$ denotes the largest eigenvalue of $K$ with respect to $g$ across $\Omega$.
\end{Theorem}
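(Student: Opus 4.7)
The plan is to prove Theorem~\ref{AEM} via the Schoen--Yau blow-up analysis of a capillarity-regularised Jang equation, following the framework developed by Andersson--Metzger \cite{AM09} and Eichmair \cite{E09}. The overall strategy is to construct a function $w$ on $\Omega$ whose graph is forced to blow up along a smooth marginally outer trapped surface that is homologous to $\partial_-\Omega$.

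\textbf{Step 1 (regularised Dirichlet problem with barriers).} For $\tau>0$ I would consider the Jang equation with capillarity term
\begin{equation*}
\left(g^{ij}-\frac{\nabla^i w\,\nabla^j w}{1+|\nabla w|^2}\right)\!\left(\frac{\nabla_i\nabla_j w}{\sqrt{1+|\nabla w|^2}}+K_{ij}\right)=\tau w,
\end{equation*}
on $\Omega$. The $\tau w$ term supplies an immediate $C^0$-bound through the maximum principle. The trapping inequalities on $\partial_\pm\Omega$ translate, via distance-function barriers of logarithmic type $w=\pm A\log(\mathrm{dist})+C$, into one-sided boundary gradient estimates: blow-up to $+\infty$ is excluded at the outer untrapped component and blow-up to $-\infty$ at the inner barrier component. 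This construction runs parallel to, but is more delicate than, the barriers built in Lemma~\ref{apriori}.

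\textbf{Step 2 (interior estimates and blow-up).} Interior gradient estimates for this prescribed-mean-curvature-type equation, obtained by applying the maximum principle to $\sqrt{1+|\nabla w|^2}$ after differentiating the equation, combined with the boundary bounds, would yield $w_\tau\in C^{2,\alpha}(\bar\Omega)$ for each $\tau>0$ via the continuity method and Schauder theory. Sending $\tau\to 0$ destroys the $C^0$-bound, but $C^{1,\alpha}_{\mathrm{loc}}$ estimates persist on the open set $\Omega_{\mathrm{reg}}=\{|w_\tau|\leq \Lambda\}$ uniformly in $\Lambda$. Extracting a subsequence, $w_\tau\to w$ locally smoothly on $\Omega_{\mathrm{reg}}$, and the boundary of the blow-up set is foliated in Hausdorff limit by vertical cylinders projecting to closed hypersurfaces $\Sigma^\pm\subset\Omega$ satisfying $H\pm\mathrm{tr}_\Sigma K=0$.

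\textbf{Step 3 (homology, regularity, almost-minimisation).} The subgraph of each $w_\tau$ is $\lambda$-almost perimeter-minimising in $\Omega\times\mathbb{R}$ with $\lambda=2(n+1)\kappa$, since the bulk deficit produced by $\mathrm{tr}_M K$ is bounded by $(n+1)\kappa$ pointwise; this property is preserved in the limit, so $\Sigma^+$ inherits the $\lambda$-almost minimising property. A degree/homology argument -- using the two-sided trapping to show the graph of $w_\tau$ must separate $\partial_+\Omega$ from $\partial_-\Omega$ in $\Omega\times\mathbb{R}$ -- forces the $+\infty$ blow-up component to be nonempty and homologous to $\partial_-\Omega$. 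In dimensions $n+1\le 7$, Federer--Almgren regularity theory for almost minimisers upgrades $\Sigma^+$ to a smooth embedded MOTS. The hard part, as expected, is Step~1: the zeroth-order term $\mathrm{tr}_M K$ prevents constant sub/supersolutions (exactly as observed in Lemma~\ref{apriori}), so each trapping inequality on $\partial_\pm\Omega$ must be matched precisely to the asymptotic profile of a one-sided barrier; and, closely related, ensuring after $\tau\to 0$ that the blow-up region is nonempty and of the correct homology class is the point at which \cite{AM09} and \cite{E09} diverge technically, both eventually invoking an intersection argument at the level of the Jang graphs.
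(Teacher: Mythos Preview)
The paper does not prove Theorem~\ref{AEM} at all: it is quoted verbatim from \cite{AEM09} as a known result due to Andersson--Metzger \cite{AM09} and Eichmair \cite{E09}, included only for comparison with Proposition~\ref{MOTS}. There is therefore no ``paper's own proof'' to compare your proposal against.

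That said, your sketch is a faithful high-level outline of the argument actually carried out in \cite{AM09,E09}: capillarity-regularised Jang equation, barrier constructions at $\partial_\pm\Omega$ using the trapping assumptions, blow-up analysis as $\tau\to0$ producing MOTS along the boundary of the blow-up set, and almost-minimising regularity in low dimensions. One small correction: the almost-minimising constant $\lambda=2(n+1)\kappa$ comes not from a pointwise bound on $\mathrm{tr}_M K$ alone but from bounding the prescribed mean curvature of the Jang graph (the full left-hand side), which involves both the divergence term and the $K$-trace; your Step~3 slightly undercounts this. Also, the homology statement is not obtained by a degree argument on the Jang graph per se, but rather (in \cite{E09}) by a direct barrier/bending argument near the boundary components, and (in \cite{AM09}) by first proving existence of a stable MOTS and then pushing it to the outermost one via a deformation argument. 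If you intend this as an exposition of the cited proof rather than a contribution of the present paper, it would be appropriate to flag it as such.
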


Here \textit{$\lambda$-almost minimising in $\Omega$} means that the surface $\Sigma$ arises as (a relative boundary of) a subset $E\subset\Omega$ with perimeter $\Sigma$ in $\Omega$ such that 
\begin{equation}\label{C-almost}
|\partial E\cap W|\leq |\partial F\cap W|+\lambda\mathcal{L}^{n+1}(E\Delta F),
\end{equation}
for every $F\subset\Omega$ such that $E\Delta F\subset\subset W\subset\subset\Omega$. A detailed analysis of such almost-minimising boundaries is carried out in \cite{DS93}. We say that the set $E$ is $\lambda$-almost minimising \textit{on the outside/inside} in $\Omega$ if $E$ satisfies (\ref{C-almost}) for every $F$ such that $E\Delta F\subset\subset W$, where $F\subseteq E$, $F\supseteq E$ respectively.\\

\textbf{Remark.} Since minimising area plus bulk energy $P$ is a stronger condition than $\lambda$-almost minimising, weak solutions $(U(x,z)=u(x),\bar{\nu})$ of $(**)$ satisfy the following $\lambda$-almost minimising properties, for $\lambda:=n\kappa$, where $\kappa$ denotes the size of the largest eigenvalue of $K$ on $M\backslash E_0$:
\begin{itemize}
\item[$(i)$] The smooth MOTS $\partial\{u>0\}$ of Proposition \ref{MOTS} is $\lambda$-almost minimising.
\item[$(ii)$] The sets $\tilde{E}_t=\{U<t\}$ and $\{U\leq t\}$ are $\lambda$-almost minimising on the outside for each $t>0$, $t\geq0$ respectively. 
\item[$(iii)$] The surfaces $\tilde{\Sigma}_{X_0}$ foliating the interior $\tilde{\mathcal{K}}_{t_0}$ of the jump region  are $\lambda$-almost minimising in $\tilde{\mathcal{K}}_{t_0}$.
\end{itemize}
\vskip 0.2 true in
\textbf{IMCF.} In this section we discuss the above results in the context of the work of Huisken and Ilmanen \cite{HI01} on inverse mean curvature flow. In particular, when applied to the special case $K\equiv0$, Definition \ref{alternative} provides a new perspective on weak solutions to inverse mean curvature flow, and the work of sections \ref{sec:jump} and \ref{jjump} carries over to prove the analogous results for the jump region.

The degenerate elliptic equation
\begin{equation}\tag{$\star$}
 \mbox{div}_M\left( \frac{\nabla u}{|\nabla u|}\right) = |\nabla u|
\end{equation}
describes inverse mean curvature flow of the level-sets of the scalar function $u:M\to\mathbb{R}$ wherever $|\nabla u|\neq0$. In \cite{HI01}, Huisken and Ilmanen define a locally Lipschitz function $ u\in C^{0,1}_{loc}(M)$ to be a weak solution of ($\star$) with initial condition $E_0$ if $E_0=\{u<0\}$ and
\begin{equation}\label{J}
 J_u(u)\leq J_u(v),\quad\quad \text{where }\,\,\,J_u(v)=J_u^A(v):=\int_A|\nabla v|+v|\nabla u|,
\end{equation}
for every locally Lipschitz function $v$ with $\{v\neq u\}\subset\subset M\backslash E_0$, where the integral is performed over any compact set $A\supseteq\{u\neq v\}$. 
They then showed that it follows that $u$ is a weak solution if and only if the open set $E_t:=\{u<t\}$ minimizes the parametric energy functional
\begin{equation}\label{JJ}
J^A_u(F)=|\partial^*F\cap A|-\int_{F\cap A}|\nabla u|,
\end{equation}
in $M\backslash E_0$ for each $t>0$. \begin{Theorem}[Existence of weak solutions, \cite{HI01}]\label{HI existence}
Let $M$ be a complete, connected Riemannian manifold without boundary. Suppose there exists a proper, locally Lipschitz, weak subsolution $v$ of (\ref{J}) with a precompact initial condition.
Then for any nonempty, precompact, smooth open set $E_0$ in $M$, there exists a proper, locally Lipschitz weak solution $u$ of $(\star)$ with initial condition $E_0$, which is unique on $M\backslash E_0$. \end{Theorem}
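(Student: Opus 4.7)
\begin{proofsketch}
The plan is to specialize the machinery of Sections 3--7 to the time-symmetric case $K \equiv 0$, in which $(**)$ reduces to $(\star)$, the variational functional $\mathcal{J}_{u,\nu}$ reduces to the scalar-gradient functional $J_u$ of (\ref{J})--(\ref{JJ}) (with no $\nu$-dependence in the bulk), and the entire jump-region normal-vector analysis of Section \ref{sec:jump} is bypassed. The proof therefore collapses to producing a locally Lipschitz limit of regularised solutions and verifying the parametric minimisation property via Compactness Property \ref{compactness}. The hypothesised proper locally Lipschitz subsolution $v$ assumes the role previously played by the asymptotic coordinate $\alpha\log r$ in the asymptotically flat setting, namely as an outer Dirichlet barrier ensuring precompactness of the approximating domains and properness of the limit.

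First I would solve the regularised Dirichlet problem $(*)_\varepsilon$ on the annular domain $\Omega_L := F_L \setminus \bar{E}_0$, where $F_L := \{v < L\}$, with boundary data $u_\varepsilon = 0$ on $\partial E_0$ and $u_\varepsilon = L-2$ on $\partial F_L$. Running through Lemmas \ref{apriori} and \ref{approxlemma} with every $K$-dependent term discarded, the sign restriction $\text{tr}_M K \geq 0$ becomes vacuous, the exponential inner barrier $v_{1,s}$ still supplies $u_\varepsilon \geq -\varepsilon$, and a rescaling of $v$ itself provides the outer comparison $u_\varepsilon \geq v - C$. The interior curvature estimate of Lemma \ref{interior}, now free of $\|K\|_{C^0}, \|K\|_{C^1}$ corrections, yields the local uniform Lipschitz bound on $u_\varepsilon$. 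Along a diagonal subsequence $\varepsilon_i \to 0$, $L_i \to \infty$ one extracts a locally Lipschitz limit $u$ on $M \setminus E_0$, proper thanks to the $v$-bound. Compactness Property \ref{compactness} then transfers the minimisation of $J_{u_i}$ by $\{u_i < t\}$ to minimisation of $J_u$ by $E_t := \{u < t\}$ for each $t > 0$; by Lemma \ref{equiv}, $u$ is a weak solution of $(\star)$ with initial condition $E_0$.

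For uniqueness on $M \setminus E_0$, given two weak solutions $u_1, u_2$ with common initial condition, I would use a comparison argument built on the minimisation characterisation: the sublevel sets $E^j_t := \{u_j < t\}$ each minimise their respective $J_{u_j}$, and by inserting $E^1_t \cap E^2_t$ and $E^1_t \cup E^2_t$ as competitors together with the perimeter submodularity $|\partial^*(E \cup F)| + |\partial^*(E \cap F)| \leq |\partial^* E| + |\partial^* F|$, one shows that $\max(u_1, u_2)$ and $\min(u_1, u_2)$ are themselves weak solutions. The strict outward optimisation of Outward Optimising Property \ref{outward1} (in its $K \equiv 0$ form, where it coincides with the Huisken--Ilmanen Minimising Hull Property) then forces $E^1_t = E^2_t$ up to null sets for every $t > 0$, so $u_1 = u_2$. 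The main obstacle here is that the bulk terms $|\nabla u_1|$ and $|\nabla u_2|$ appearing in the two minimisation inequalities are distinct, so direct addition of the inequalities does not produce immediate cancellation; the resolution, as in \cite{HI01}, is to first establish identification at regular times (where strict outward optimisation applies level by level) and then extend across jump times by lower semicontinuity of perimeter and the nestedness of the two families $\{E^j_t\}$.
\end{proofsketch}
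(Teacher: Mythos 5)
The paper does not actually supply a proof of Theorem \ref{HI existence}: it is quoted from \cite{HI01} as a known result. The \texttt{proofsketch} environment is declared in the preamble but never invoked in the body, so there is no ``paper's own proof'' to compare against. I can only evaluate your sketch on its own merits.

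For the existence half, your strategy of specialising the paper's machinery to $K\equiv0$ and letting the hypothesised weak subsolution $v$ play the outer-barrier role that $\alpha\log r$ plays in the asymptotically flat setting is essentially the right route and is consistent with the original Huisken--Ilmanen elliptic-regularisation proof. Two technical points deserve attention, though. First, the theorem hypothesis only supplies a \emph{locally Lipschitz weak} subsolution, while Lemma \ref{apriori} deploys $\alpha\log r$ as a smooth classical subsolution of (\ref{modified}); bridging this gap (smoothing $v$, or invoking viscosity comparison against Lipschitz barriers) is precisely the ``condition that $\text{tr}_M K$ is irrelevant'' step you wave at but do not make. Second, the citation of Compactness Property \ref{compactness} is slightly misplaced: that result needs $C^{1,\alpha}_{\text{loc}}$ convergence of the boundaries $\partial\tilde E_i$ and the normals $\nu_i$, which is not automatic for the sublevel sets $\{u_i<t\}$ at fixed $t$; since the $K\equiv0$ bulk term has no $\nu$-dependence, what you actually want is the scalar Compactness Property \ref{COMP} (equivalently HI's original Theorem~2.1) applied to the $U_i$, which transfers (\ref{Mvar}) to the limit without any a priori surface regularity.

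The genuine gap is in the uniqueness argument. You assert that $\max(u_1,u_2)$ and $\min(u_1,u_2)$ are themselves weak solutions, but this is not proven, and it is not a step one can take for free: the bulk terms $|\nabla u_1|$, $|\nabla u_2|$ are distinct, and the minimising inequalities for $E^1_t$ and $E^2_t$ are with respect to \emph{different} functionals $J_{u_1}$, $J_{u_2}$, so inserting $E^1_t\cap E^2_t$ and $E^1_t\cup E^2_t$ and using submodularity yields only a one-sided integral inequality on $E^1_t\triangle E^2_t$, not the desired cancellation. You correctly identify this obstacle but the proposed ``resolution'' — identify at regular times via strict outward optimisation, then pass to jump times by lower semicontinuity — is too vague to check, and it also misapplies Outward Optimising Property \ref{outward1}, which is a statement about a \emph{single} weak solution $u$ and its fattened sublevel sets $E_t^+$, not a comparison principle between two different weak solutions. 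The actual Huisken--Ilmanen Uniqueness Theorem (Theorem 2.2 of \cite{HI01}) does not go through max/min; it is a direct nestedness argument showing $E_0^1\subseteq E_0^2$ forces $E_t^1\subseteq E_t^2$ for all $t>0$, obtained by playing off the strict minimality of each family against competitors formed by intersecting level sets of $u_1$ at one time with level sets of $u_2$ at another, and exploiting the resulting contradiction when the inclusion first fails. Until that argument (or a complete alternative) is spelled out, the uniqueness claim in your sketch is unsupported.
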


In \cite{He01}, Heidusch proved optimal $C_{\text{loc}}^{1,1}$ regularity for the level sets $N_t=\partial\{u<t\}$ and $N_t^+=\partial\{u>t\}$ of the weak solution. The theory of weak solutions to inverse mean curvature flow as laid out in \cite{HI01}, however, does not include an analysis of the interior of jump regions. Applying Proposition \ref{MOTS1} in this special case where $K\equiv0$, we obtain a foliation of the interior of the jump region $\{u=t_0\}\times\mathbb{R}$ by area minimising hypersurfaces, a result which was left open in \cite{HI01}.

\begin{Corollary}\label{jump}
 Let $u$ be the weak solution of inverse mean curvature flow given by Theorem \ref{HI existence}. At a jump time $t_0$, the interior $\tilde{\mathcal{K}}_{t_0}$ of the region $\{u=t_0\}\times\mathbb{R}$ is foliated by smooth area minimising surfaces, each of which is either a vertical cylinder or a smooth graph over an open subset of $\tilde{\mathcal{K}}_{t_0}$.
\end{Corollary}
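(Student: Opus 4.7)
The plan is to obtain the corollary as a direct specialisation of Propositions \ref{jumpprop} and \ref{MOTS1} to the case $K\equiv 0$. First I would invoke Proposition \ref{jumpprop}, whose construction uses no sign or structural hypothesis on $K$ beyond what is automatically present, to foliate $\tilde{\mathcal{K}}_{t_0}$ by $C^{1,\alpha}_{\text{loc}}$ hypersurfaces $\tilde{\Sigma}_{X_0}$, each either a vertical cylinder or a graph over an open subset of $\tilde{\mathcal{K}}_{t_0}$. Lemma \ref{minimising} then provides, for each such $\tilde{\Sigma}_{X_0}$, a bounded Caccioppoli set that minimises $\mathcal{J}_{U,\tilde{\nu}}$ in $\tilde{\mathcal{K}}_{t_0}$.

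Next I would observe that on the jump region the functional collapses to pure perimeter. Since $K_{ij}\equiv 0$ the bulk term $(\bar{g}^{ij}-\tilde{\nu}^i\tilde{\nu}^j)K_{ij}$ vanishes identically, and since $U=u=t_0$ is constant on $\tilde{\mathcal{K}}_{t_0}$ one has $|\bar{\nabla}U|\equiv 0$ there as well. Hence $\mathcal{J}_{U,\tilde{\nu}}(F)=|\partial^*F\cap A|$ for every admissible competitor $F$, and the minimising property supplied by Lemma \ref{minimising} is exactly the statement that each $\tilde{\Sigma}_{X_0}$ bounds a locally area minimising Caccioppoli set in $\tilde{\mathcal{K}}_{t_0}$.

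Finally I would upgrade $C^{1,\alpha}$ regularity to smoothness following the route of Proposition \ref{MOTS1}. Writing the minimising Caccioppoli set locally as the subgraph of a $C^{1,\alpha}$ function $w$ via the identity (\ref{giusti}), the Euler-Lagrange equation (\ref{minsurfeqn}) specialises to the minimal surface equation
\begin{equation*}
\text{div}\left(\dfrac{\bar{\nabla}w}{\sqrt{1+|\bar{\nabla}w|^2}}\right)=0,
\end{equation*}
a quasilinear uniformly elliptic equation in divergence form. Schauder bootstrapping from the $C^{1,\alpha}$ regularity of $w$ then yields $w\in C^{\infty}$, so that each $\tilde{\Sigma}_{X_0}$ is a smooth area minimising hypersurface; a partition-of-unity argument patches these local smoothness statements into smoothness of the entire leaf. (In ambient dimension $n+1\geq 8$ one allows the usual singular set of dimension at most $n-7$, as quoted in Regularity Theorem \ref{regthm}.)

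Since every step is a transparent $K\equiv 0$ reduction of machinery already developed in the paper, I do not anticipate a substantive analytic obstacle. The only point meriting explicit verification is that the simultaneous vanishing of $|\bar{\nabla}U|$ and of the $K$-term in the bulk really does convert the statement ``minimiser of $\mathcal{J}_{U,\tilde{\nu}}$'' into the unqualified statement ``area minimiser,'' rather than into some degenerate weaker property — but this is immediate from the definition of $\mathcal{J}_{U,\tilde{\nu}}$.
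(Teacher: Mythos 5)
Your proposal is correct and follows the paper's own route: the paper obtains Corollary \ref{jump} simply by specialising Proposition \ref{MOTS1} (together with Proposition \ref{jumpprop} and Lemma \ref{minimising}) to $K\equiv 0$, which is exactly the unwinding you carry out, with the bulk term and $|\bar{\nabla}U|$ both vanishing on $\tilde{\mathcal{K}}_{t_0}$ so that minimising $\mathcal{J}_{U,\tilde{\nu}}$ reduces to minimising perimeter and (\ref{minsurfeqn}) reduces to the minimal surface equation. Both you and the paper leave implicit that the Huisken--Ilmanen weak solution from Theorem \ref{HI existence} coincides with the limit $u$ of the elliptic regularisation in (\ref{Udefn1}), which is needed for Proposition \ref{jumpprop} to apply, but this follows from the uniqueness asserted in Theorem \ref{HI existence}.
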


We can then utilise the jump region hypersurfaces of Corollary \ref{jump} to present a new perspective on weak solutions of inverse mean curvature flow. In particular, by instead considering the weak solution to be a family of hypersurfaces one dimension higher in $M\times\mathbb{R}$, we can ask for the functional $J_U$, defined by (\ref{JJ}), to be minimised \textit{everywhere} in $(M\backslash \bar{E}_0)\times\mathbb{R}$, and obtain the following richer notion of weak solution.

\begin{Definition}[Alternative weak formulation]\label{alternative}
Let $u$ be the unique, locally Lipschitz weak solution to $(\star)$ on $M\backslash E_0$ given by Theorem \ref{HI existence}, and define the locally Lipschitz function $U(x,z):=u(x)$ on $(M\backslash E_0)\times\mathbb{R}$. The weak solution to $(\star)$ is defined to be the pair $(U,\nu)$, where $\nu$ is a unit length, translation invariant extension of $\dfrac{\bar{\nabla}U}{|\bar{\nabla}U|}$ in the jump regions such that at each point $x\in\tilde{\mathcal{K}}_{t_0}$, $\nu(x)$ is the normal vector to a $C^{1,\alpha}$ hypersurface passing through $x$, which bounds a Caccioppoli set that minimises $J_{U,\nu}$ in $\tilde{\mathcal{K}}_{t_0}$. \end{Definition}
 
We obtain the following weak existence result as a corollary of Theorem \ref{altIMCF}.
\begin{Corollary}[Existence of weak solutions]\label{altIMCF2}
Let $M$ be a complete, connected Riemannian $n$-manifold without boundary. Suppose there exists a proper, locally Lipschitz, weak subsolution of (\ref{J}) with a precompact initial condition. 
Then for any nonempty, precompact, smooth open set $E_0$ in $M$, there exists a weak solution satisfying Definition \ref{alternative} in $M\backslash E_0\times\mathbb{R}$ with initial condition $E_0$. 
\end{Corollary}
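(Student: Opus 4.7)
The plan is to derive Corollary \ref{altIMCF2} as a direct specialisation of Theorem \ref{altIMCF} to the case $K\equiv0$, with the asymptotic flatness hypothesis replaced by the existence of a proper weak subsolution. The first observation is that when $K\equiv0$ the bulk term $(\bar{g}^{ij}-\bar{\nu}^i\bar{\nu}^j)K_{ij}$ in $\mathcal{J}_{U,\bar{\nu}}$ vanishes identically, so $\mathcal{J}_{U,\bar{\nu}}$ reduces to the Huisken--Ilmanen functional $J_U$ of (\ref{JJ}). Consequently the minimisation clauses in Definition \ref{weakdefinition}(ii),(iii) reduce exactly to those in Definition \ref{alternative}, and by Lemma \ref{reduction} the underlying locally Lipschitz function $u$ is a Huisken--Ilmanen weak solution of $(\star)$, so uniqueness of $u$ on $M\backslash E_0$ is inherited from Theorem \ref{HI existence}.

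The next step is to re-examine the proof of Theorem \ref{altIMCF} and check that asymptotic flatness enters only through the choice of outer comparison function $v=\alpha\log r$, which is used to prescribe $\partial F_L$ in the elliptic regularisation $(*)_{\varepsilon}$ and to make $\Omega_L:=F_L\backslash\bar{E}_0$ precompact. Replacing $\alpha\log r$ by the given proper subsolution $v$ and setting $F_L:=\{v<L\}$ restores both roles, with $F_L$ precompact by properness of $v$. Since $K\equiv0$, the hypothesis $\text{tr}_MK\geq0$ holds trivially; all $K$-dependent terms in the barrier constructions and a priori estimates of Lemma \ref{apriori} collapse, and the sup, boundary gradient, and interior gradient estimates reduce verbatim to those of \cite[Lemma 3.4]{HI01}. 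Existence of the regularised solution then follows by continuity along the family $(*)_{\varepsilon,s}$ exactly as in Lemma \ref{approxlemma}.

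With a uniform local Lipschitz bound in hand I extract a locally uniform limit $u_i\to u$ and set $U(x,z):=u(x)$ as in (\ref{Udefn1}). The jump region analysis of Section \ref{sec:jump} is entirely local: it relies only on the interior null mean curvature estimate of Lemma \ref{interior} (which with $K\equiv0$ is the standard $H$-estimate of \cite{HI01} for IMCF), the area bound (\ref{areaest}), and the $C^{1,\alpha}$ Regularity Theorem \ref{regthm}, none of which uses asymptotic flatness or any sign on $K$. Hence Proposition \ref{jumpprop} produces a $C^{1,\alpha}_{\text{loc}}$ foliation of each jump region $\tilde{\mathcal{K}}_{t_0}$ with $C^{0,\alpha}_{\text{loc}}$ unit normal $\tilde{\nu}$, and Proposition \ref{MOTS1} upgrades each leaf to a smooth MOTS --- which when $K\equiv0$ is simply a smooth minimal hypersurface, recovering Corollary \ref{jump}. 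The global normal field $\bar{\nu}$ is then constructed exactly as in the proof of Theorem \ref{altIMCF}: on regular vertical level cylinders it equals $\bar{\nabla}U/|\bar{\nabla}U|$, inside jump regions it equals $\tilde{\nu}$, and on $\tilde{\Sigma}_{t_0}$ and $\tilde{\Sigma}^+_{t_0}$ it is defined by one-sided limits from below and above. That $\{U<t\}$ minimises $J_{U,\bar{\nu}}=J_U$ in $(M\backslash\bar{E}_0)\times\mathbb{R}$ for every $t$ follows from Compactness Property \ref{compactness} applied to the smooth solutions $\tilde{E}^i_{t_i}$, which minimise $J_{U_i,\nu_i}$ by Smooth Flow Lemma \ref{SMOO}; the interior minimising property at jump times is exactly Lemma \ref{minimising}. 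This verifies Definition \ref{alternative}.

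The main obstacle is not analytical but accounting: one has to confirm, clause by clause, that no step of the proof of Theorem \ref{altIMCF} secretly uses asymptotic flatness beyond the construction of the outer barrier, and that the sup estimates (\ref{supest1})--(\ref{supestimate}) and area bound (\ref{areaest}) all survive when $\alpha\log r$ is replaced by the general subsolution $v$. This is the same verification already carried out in \cite[Thm 3.1]{HI01} to pass from asymptotically flat ends to the subsolution-based hypothesis in Theorem \ref{HI existence}, and with $K\equiv0$ the estimates simplify rather than worsen, so no new idea is required.
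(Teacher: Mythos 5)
Your proposal is correct and matches the paper's implicit argument — the paper states the Corollary without proof, merely asserting it follows from Theorem \ref{altIMCF}, and your reasoning supplies exactly the verification that claim requires. You correctly identify that when $K\equiv0$ the functional $\mathcal{J}_{U,\bar{\nu}}$ collapses to $J_U$, that asymptotic flatness enters the proof of Theorem \ref{altIMCF} only through the construction of the outer comparison function $v=\alpha\log r$ (and hence of the exhausting domains $F_L=\{v<L\}$), and that replacing it by the given proper subsolution reproduces the same role — precisely the modification already performed by Huisken and Ilmanen to pass from asymptotically flat ends to the subsolution hypothesis in Theorem \ref{HI existence}. The only caveat worth flagging is terminological: the statement is presented as ``a corollary of Theorem \ref{altIMCF},'' but since the hypothesis changes (asymptotic flatness plus $\text{tr}_MK\geq0$ is replaced by the subsolution hypothesis), it is really a corollary of the \emph{proof} of Theorem \ref{altIMCF}, not of its statement; you acknowledge and handle this correctly by re-examining where asymptotic flatness is actually used, which is the right way to read the paper's intent.
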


\end{document}